\documentclass[10pt,twocolumn]{article}

\usepackage[a4paper,margin=1.75cm,columnsep=0.65cm]{geometry}
\usepackage[T1]{fontenc}
\usepackage[utf8]{inputenc}
\usepackage{lmodern}
\usepackage{microtype}
\usepackage{setspace}
\usepackage{balance}

\usepackage{amsmath,amssymb,amsfonts,amsthm,mathtools}
\usepackage{bm}
\usepackage{mathrsfs}

\usepackage{graphicx}
\usepackage{booktabs}
\usepackage{array}
\usepackage{multirow}
\usepackage{tabularx}
\usepackage{caption}
\usepackage{subcaption}
\usepackage{float}
\usepackage{tikz}
\usetikzlibrary{arrows.meta,positioning,calc,shapes.geometric,fit,matrix}
\usepackage{pgfplots}
\pgfplotsset{compat=1.18}

\usepackage{enumitem}
\usepackage[ruled,vlined,linesnumbered]{algorithm2e}
\usepackage[authoryear,round]{natbib}

\usepackage{xcolor}
\usepackage{doi}
\hypersetup{hidelinks}

\usepackage[nameinlink,capitalise,noabbrev]{cleveref}

\newtheorem{assumption}{Assumption}
\newtheorem{definition}{Definition}
\newtheorem{theorem}{Theorem}
\newtheorem{proposition}{Proposition}
\newtheorem{lemma}{Lemma}
\newtheorem{corollary}{Corollary}
\theoremstyle{remark}
\newtheorem{remark}{Remark}

\crefname{assumption}{assumption}{assumptions}
\Crefname{assumption}{Assumption}{Assumptions}

\newcommand{\R}{\mathbb{R}}
\newcommand{\N}{\mathbb{N}}
\newcommand{\E}{\mathbb{E}}
\newcommand{\Pp}{\mathbb{P}}
\newcommand{\B}{\mathcal{B}}
\newcommand{\F}{\mathcal{F}}
\newcommand{\Y}{\mathcal{Y}}
\newcommand{\Hh}{\mathcal{H}}
\newcommand{\Oo}{\mathcal{O}}
\newcommand{\Pcal}{\mathcal{P}}
\newcommand{\Ucal}{\mathcal{U}}
\newcommand{\Xcal}{\mathcal{X}}

\newcommand{\Ical}{\mathcal{I}}
\newcommand{\one}{\mathbf{1}}

\title{Recursive Filtering and Stochastic Control under Finite Partition-Based Observations}

\author{
	Saul Díaz-Infante Velasco
	\thanks{
		Departamento de Matemáticas,
		SECIHTI--Universidad de Sonora,
		Boulevard Luis Encinas y Rosales s/n,
		Col. Centro,
		Hermosillo 83000,
		Sonora, Mexico.
		E-mail: \texttt{saul.diazinfante@unison.mx}
	}
	\and
	Yofre H. García Gómez
\thanks{
	Corresponding author.
	Facultad de Ciencias en Física y Matemáticas,
	Universidad Autónoma de Chiapas,
	Ciudad Universitaria,
	Carretera Emiliano Zapata Km. 8,
	Rancho San Francisco,
	Tuxtla Gutiérrez 29050,
	Chiapas, Mexico.
	E-mail: \texttt{yofre.garcia@unach.mx}
}
	\and
	Jesús Adolfo Minjárez-Sosa
	\thanks{
		Departamento de Matemáticas,
		Universidad de Sonora,
		Boulevard Luis Encinas y Rosales s/n,
		Col. Centro,
		Hermosillo 83000,
		Sonora, Mexico.
		E-mail: \texttt{adolfo.minjarez@unison.mx}
	}
}

\date{}

\begin{document}

\maketitle

\begin{abstract}
	We develop a filtering and optimal-control framework for partially observable stochastic systems in which each observation identifies a class of a finite measurable partition of the hidden state space. This structure covers regional-information mechanisms associated with threshold, quantized, censored, event-triggered, and intermittent observations, and allows observable classes with atomic, continuous, or mixed components. The formulation is constructed first at the level of measures: for each observable class, we define a class-restricted unnormalized conditional measure, and the posterior distribution is obtained by normalizing with its predictive probability. Based on this recursion, we introduce an information state consisting of the observed class and the conditional measure supported on it, thereby transforming the original problem into a fully observable Markov decision process. We formulate the discounted-cost criterion, derive the Bellman equation, and establish conditions for the existence of stationary optimal policies. To address the infinite-dimensional nature of the information space, we propose class-dependent finite-dimensional approximations capable of preserving both continuous components and atomic masses. We also derive an abstract bound linking the error of the approximate filter to the error of the value function. A reference model illustrates the construction through histogram-based approximations.
\end{abstract}

\noindent\textbf{Keywords:} partially observable Markov decision process; nonlinear filtering; finite observable partition; unnormalized conditional measure; information state; stochastic control.

\section{Introduction}
\label{sec:introduction}

Partially observable stochastic systems arise when decisions must be made from incomplete, coarse, delayed, or intermittent information. In such problems, the true state is not directly available and must be inferred from observations. Nonlinear filtering, recursive Bayesian updating, and the theory of partially observable Markov decision processes provide rigorous tools for propagating conditional distributions, incorporating new information, and transforming the original problem into a fully observable one on a space of probability measures.

In many applications, however, the observation does not provide a numerical value of the hidden state, but only identifies the region to which it belongs. This situation appears in threshold sensors, censored measurements, quantized data, alarm systems, event-triggered observations, intermittent information, and symbolic outputs. In all these cases, the observed information has a regional structure that can be represented by a measurable partition of the state space.

Although an abstract observation kernel can formally describe these mechanisms, it may conceal an essential feature: once a class is observed, the posterior distribution is supported on that class. This support restriction affects the recursive update, the probabilities of future observations, and the dynamics of the fully observable model. Making the partition explicit is therefore not merely a matter of notation, but a way of preserving the geometric and measure-theoretic structure of the available information.

This issue is particularly important when observable classes have a mixed structure. A class may contain continuous regions together with isolated points, saturation values, atomic boundaries, or special states. In such cases, the posterior distribution may combine absolutely continuous and atomic components, so a representation based on a single global density may be inadequate. A formulation developed first at the level of measures allows atomic, continuous, and mixed classes to be treated within a common framework.

From this perspective, the filtering update is performed by first restricting the predictive measure to the observed class and then normalizing by the probability of that class. This sequence provides a natural description of regional observations and makes zero-probability branches explicit. In the control problem, the available information must retain both the observed class and the conditional measure supported on it, leading to an information state of the form
\[
I_t=(B_t,z_t),
\]
where \(B_t\) denotes the observed class and \(z_t\) the corresponding conditional distribution.

The resulting fully observable transformation generally remains infinite-dimensional. Nevertheless, the class structure suggests more suitable approximations than a single global family. Continuous classes may be approximated by histograms, splines, truncated kernels, or other finite bases; atomic classes admit exact discrete representations; and mixed classes require approximations that preserve both their continuous part and point masses. These finite-dimensional families are needed not only to approximate the filter, but also to study how approximation errors propagate to the information-state kernel, the Bellman operator, and the value function.

The aim of this paper is to develop a general filtering and optimal-control framework for stochastic systems whose observations are generated by a finite measurable partition of the hidden state space. The formulation is constructed at the level of conditional measures, allows atomic, continuous, and mixed classes, establishes the fully observable transformation, and introduces class-dependent finite-dimensional approximations. The approach complements existing formulations based on continuous, quantized, censored, event-triggered, or intermittent observations by making explicit the common partition structure underlying these mechanisms.

The remainder of the paper is organized as follows. We first present the model, the observation mechanism, and the class-based filtering recursion. We then construct the information state and the equivalent fully observable control problem. Next, we develop the finite-dimensional approximation and the associated error bound. Finally, we introduce a reference model illustrating the methodology and summarize the main conclusions.

\subsection{Literature Review}

The study of partially observable stochastic systems lies at the intersection of nonlinear filtering, recursive Bayesian inference, and optimal control under incomplete information. Over the past several decades, these areas have developed rigorous tools for describing the evolution of conditional distributions, reformulating partially observable problems as fully observable models on belief spaces, and constructing numerical approximations for systems with continuous state variables. Nevertheless, when observations do not reveal the state directly but only indicate its membership in a measurable region, class, or event, the relevant literature becomes more fragmented across censored, quantized, threshold-based, event-triggered, and intermittent-information settings. In the latter case, observations may arrive irregularly, may be available only at selected times, or may be replaced by informative non-arrival events, so that the absence of a measurement can itself affect the conditional state estimate. To position the contribution of the present work within this broader context, the review is organized into five narrative blocks: classical nonlinear filtering; recursive Bayesian filtering and numerical approximation; belief-state formulations for partially observable control; coarse, event-dependent, and intermittent observation mechanisms; and, finally, the research gap motivating the partition-based filtering and control framework developed here.

\subsubsection{Classical nonlinear filtering}

Classical nonlinear filtering is concerned with the recursive estimation of an unobserved state process from the information generated by an observation process. Its fundamental object is the conditional distribution of the hidden state given the observation filtration, which summarizes all the information available for estimating unobservable quantities. Early systematic developments of the theory showed that, under suitable conditions, the conditional probabilities associated with Markov processes satisfy stochastic differential equations. In particular, Kushner derived an equation governing the evolution of conditional densities for Markov processes and established its relevance to stochastic estimation and control problems \citep{Kushner1964ConditionalDensities}. He later developed a more general dynamical formulation for optimal nonlinear filtering, thereby providing an analytical foundation for the temporal evolution of conditional distributions in models with continuous noisy observations \citep{Kushner1967DynamicalEquations}.

Within the same line of research, Wonham obtained a finite-dimensional filtering equation for a finite-state Markov chain observed through a continuous signal corrupted by Gaussian noise \citep{Wonham1964NonlinearFiltering}. This result remains one of the classical examples in which a nonlinear filtering problem admits an exact representation through a finite system of stochastic differential equations. The innovation-based formulation was subsequently developed by Fujisaki, Kallianpur, and Kunita, who derived filtering equations in terms of the innovation process associated with the observations \citep{FujisakiKallianpurKunita1972Filtering}. This approach separates the genuinely new information carried by each observation from the information already incorporated into the conditional distribution and provides a natural probabilistic interpretation of the filtering update.

A second fundamental formulation was introduced by Zakai, who replaced the normalized nonlinear filtering equation with a linear equation for an unnormalized conditional measure \citep{Zakai1969OptimalFiltering}. The normalized filter can then be recovered by dividing by the total mass of the unnormalized measure, so that the Zakai formulation separates the propagation of the conditional measure from the normalization step. The relationship between normalized and unnormalized conditional measures was systematized within the general theory of stochastic filtering by Kallianpur \cite{Kallianpur1980StochasticFiltering} and further developed from the perspectives of random processes and statistical inference by Liptser and Shiryaev \cite{LiptserShiryaev2001StatisticsRandomProcessesII}. A modern and unified treatment of these constructions, including the Kallianpur--Striebel formula, change-of-measure arguments, the innovation process, and the Kushner--Stratonovich and Zakai equations, is provided by Bain and Crisan \cite{BainCrisan2009StochasticFiltering}.

Beyond the derivation of the fundamental filtering equations, a substantial part of the literature has focused on their well-posedness, stability, and numerical approximation. Ito and Rozovskii studied the Kushner equation and its approximation, providing results that support the construction of numerical schemes for nonlinear filters defined in function spaces \citep{ItoRozovskii2000KushnerEquation}. Baxendale, Chigansky, and Liptser analyzed the asymptotic stability of the Wonham filter for both ergodic and nonergodic signals, with particular emphasis on the gradual loss of dependence on the initial distribution \cite{BaxendaleChiganskyLiptser2004WonhamStability}. From a computational perspective, Yau and Yau developed a real-time procedure for solving the nonlinear filtering problem through a formulation related to the Duncan--Mortensen--Zakai equation, with the aim of avoiding the storage of the entire observation history \citep{YauYau2008RealTimeFiltering}.

These works establish the probabilistic and analytical foundations for representing the filter either as a normalized conditional distribution or through an unnormalized conditional measure. However, the classical formulations are primarily designed for continuous observation processes corrupted by noise, where the filtering update is expressed through an observation function or a likelihood density. They do not explicitly address settings in which the available information only identifies the membership of the hidden state in a measurable region, class, or partition element. This distinction motivates the study of recursive Bayesian formulations capable of incorporating discrete, regional, event-dependent, and intermittent observations.

\subsubsection{Recursive Bayesian filtering and numerical approximation}

Recursive Bayesian filtering provides a natural framework for estimating the hidden state of a partially observed stochastic system in discrete time. In controlled models, the conditional distribution not only represents an estimate of the unobserved state, but also evolves under the influence of the selected actions. In particular, the prediction step depends on the controlled transition kernel, and when the observation mechanism is also affected by the action, the Bayesian update incorporates this additional dependence. Consequently, each decision may modify both the future predictive distribution and the information conveyed by subsequent observations. In this subsection, however, the emphasis remains on the construction and numerical approximation of the filtering recursion; its interpretation as a sufficient state for control is deferred to the subsequent belief-state formulation.

The Bayesian recursion is commonly organized into two complementary steps. In the prediction step, the filtered distribution is propagated through the state dynamics, possibly conditional on the applied action. In the correction step, the predictive distribution is updated using the new observation and Bayes' rule. This decomposition separates the effect of the controlled dynamics from the informational content of the observations and provides the basis for a wide range of numerical methods for nonlinear and non-Gaussian models.

One of the earliest systematic developments of this perspective was presented by \citet{Kitagawa1987NonGaussianStateSpace}, who formulated non-Gaussian state-space models for nonstationary time series and proposed a numerical approximation of the predictive and filtered distributions through a discretization of the state space. This approach showed that the Bayesian recursion could be implemented directly on an approximate representation of the conditional density without imposing Gaussian assumptions. Subsequently, \citet{Kitagawa1996MonteCarloFilter} developed a Monte Carlo filter and smoother for nonlinear and non-Gaussian state-space models, replacing the deterministic representation of the density with an empirical measure constructed from random samples. This work helped consolidate the connection between recursive Bayesian updating and sequential simulation methods.

A particularly influential formulation was introduced by \citet{GordonSalmondSmith1993BayesianStateEstimation}, who proposed the bootstrap filter for nonlinear and non-Gaussian Bayesian state estimation. The method represents the conditional distribution by a set of weighted particles propagated according to the state dynamics and reweighted using the likelihood of the new observation. In a controlled model, this propagation may be performed through the transition kernel corresponding to the action applied at the preceding stage. The inclusion of a resampling mechanism reduces weight degeneracy and preserves a representative particle population over time. This construction gave rise to a broad family of particle filters and sequential Monte Carlo methods.

The structure, implementation, and practical limitations of these methods were systematized by \citet{ArulampalamMaskellGordonClapp2002ParticleFilters}, who provided a detailed review of particle filters for online Bayesian tracking. In particular, they analyzed the propagation, weighting, and resampling stages, together with issues such as weight degeneracy and sample impoverishment. From a theoretical perspective, \citet{CrisanDoucet2002ParticleFilteringConvergence} reviewed convergence results for particle approximations and identified conditions under which empirical measures converge to the true predictive and filtered distributions. These results provide a probabilistic justification for the use of interacting particle systems as recursive approximations of Bayesian filters.

The subsequent development of sequential Monte Carlo methods considerably broadened this framework. \citet{CappeGodsillMoulines2007SequentialMonteCarlo} organized the main existing algorithms and discussed advances related to adaptive proposal distributions, smoothing, parameter estimation, and variance reduction. Their treatment shows that particle filtering is not a single algorithm, but rather a flexible class of approximation methods for sequences of probability distributions. This flexibility allows controlled transition kernels and different observation mechanisms to be incorporated, although the computational burden may increase rapidly with the dimension of the state, the concentration of the likelihood, and the number of admissible actions.

In addition to sample-based approximations, the literature has also developed finite-dimensional representations of nonlinear filtering equations. \citet{GermaniPiccioni1987FiniteDimensionalFiltering} studied finite-dimensional approximations of the nonlinear filtering equation in mild form. Their analysis provides a basis for approximating the conditional distribution through projections onto finite-dimensional spaces, including methods based on functional bases, spatial discretizations, and Galerkin-type approximations. These techniques are especially relevant when an explicit representation of the density is required and when the prediction and correction operators associated with each action and observation must be applied repeatedly.

The stability of recursive approximations is another essential issue. \citet{DelMoralGuionnet2001InteractingProcesses} analyzed the stability of interacting particle systems and their applications to filtering and genetic algorithms. Their results relate the stability of the limiting process to that of its empirical approximations and provide tools for studying the temporal propagation of approximation errors. This issue is particularly relevant in controlled systems, since errors introduced at one update may affect later predictive distributions and may do so differently depending on the sequence of applied actions.

These works show that the Bayesian filtering recursion can be approximated through density discretization, particle systems, functional projections, or combinations of these techniques. In a stochastic control setting, the significance of these approximations is not limited to the quality of pointwise or distributional state estimation: an approximate filter also induces approximations of observation probabilities and of the future evolution of the available information. 

Most of the methods reviewed above also begin with a likelihood function defined on a fixed observation space and update the predictive distribution directly using an observed value. In the framework considered in this paper, however, the observation does not necessarily provide a point-valued measurement, but instead identifies a class or region of a measurable partition of the state space. The update is therefore interpreted first as a restriction of the predictive measure to the observed class and then as a normalization by the probability of that class. This structure suggests class-dependent finite-dimensional approximations in which the restricted measure is treated as the primary object and an approximate density representation is introduced only at a subsequent stage.

\subsubsection{Belief-state formulations for partially observable control}

Partially observable Markov decision processes provide a natural framework for control problems in which the true state of the system is not directly available to the decision maker. In these models, actions are selected on the basis of the history of observations and controls, while the hidden state evolves according to a controlled transition kernel and the available information is generated through an observation mechanism. The main difficulty is that the observable history grows over time and, in principle, does not define a state of fixed dimension. Belief-state reduction addresses this issue by replacing the complete observation history with the conditional distribution of the hidden state given the available information.

The foundations of this reduction were established in the classical works of \citet{SmallwoodSondik1973FiniteHorizonPOMDP} and \citet{Sondik1978InfiniteHorizonPOMDP}. For finite state, action, and observation spaces, these authors showed that the belief distribution is a sufficient statistic for decision making and that the partially observable problem can be reformulated as a fully observable Markov decision process on the probability simplex. In the finite-horizon case, \citet{SmallwoodSondik1973FiniteHorizonPOMDP} proved that the value function is piecewise linear and convex in the belief variable. Subsequently, \citet{Sondik1978InfiniteHorizonPOMDP} extended the analysis to the infinite-horizon discounted criterion and developed approximation procedures based on finite representations of the value function.

The belief-state formulation was systematized in the survey by \citet{Monahan1982POMDPSurvey}, who organized the main theoretical results, models, and algorithms available for POMDPs. This work emphasized that the transformation to the belief space makes dynamic programming applicable, although it introduces a continuous state space even when the original model has finitely many states. The resulting computational difficulty motivated the study of structural properties and specialized solution methods. In particular, \citet{Lovejoy1987MonotonicityPOMDP} established monotonicity results for classes of partially observed Markov decision processes, showing that, under suitable conditions, an ordering of beliefs can induce qualitative properties of the value function and optimal policies.

Solution procedures for finite POMDPs were examined by \citet{WhiteScherer1989SolutionProceduresPOMDP}, who compared iterative methods and approximation schemes for partially observable problems. From a broader perspective, \citet{KaelblingLittmanCassandra1998POMDP} consolidated the interpretation of the POMDP as a planning and acting problem under uncertainty, emphasizing Bayesian belief updating and the use of $\alpha$-vector representations for piecewise-linear value functions. Their treatment helped connect the classical theory of stochastic control with planning methods used in artificial intelligence and robotics.

The geometry of the belief space has also played an important role in the analysis of these models. \citet{Zhang2010GeometricPOMDP} developed a geometric technique for studying POMDPs and analyzing the structure of their optimality regions. This approach interprets optimal decisions through partitions of the belief simplex determined by the regions in which different supporting vectors or functions attain the dominant value. Such results show that, even in finite models, much of the complexity is associated with the continuous evolution of the belief distribution and with the geometry induced by the optimality equation.

Extending the belief-state approach to continuous state spaces requires the representation and propagation of probability distributions in infinite-dimensional or high-dimensional spaces. \citet{BrooksMakarenkoWilliamsDurrantWhyte2006ParametricPOMDP} proposed parametric belief representations for planning in continuous state spaces. Rather than maintaining an arbitrary probability distribution, their method approximates the belief by a finite-dimensional parametric family, thereby reducing the computational cost of both updating and planning. Complementarily, \citet{ZhouFuMarcus2010ContinuousStatePOMDP} developed a density-projection approach for solving continuous-state POMDPs. Their method projects the belief density onto a finite-dimensional family and formulates dynamic programming in terms of the resulting parameters. These contributions illustrate how the choice of filter representation directly determines the computational tractability of the control problem.

Another line of research has focused on approximating policies through finite-memory controllers. \citet{YuBertsekas2008FiniteStateControllers} analyzed the near optimality of finite-state controllers for average-cost POMDPs. These controllers replace the full belief distribution with a finite internal state that partially summarizes the observation history. Although this representation can substantially reduce complexity, it also introduces information loss whose effect on performance must be quantified.

Finite-model approximation has been studied rigorously by \citet{SaldiYukselLinder2020FinitePOMDPApproximation}. These authors established asymptotic optimality results for finite approximations of discounted-cost POMDPs and provided conditions under which the value functions and policies of the approximate models converge to those of the original problem. Such results are particularly important for models with general spaces, since they justify the discretization of the state, observation, or belief space as a consistent computational strategy.

From the broader perspective of stochastic control, \citet{Bensoussan1992PartiallyObservableControl} developed a systematic theory for partially observable systems and showed how control problems under incomplete information can be transformed through filtering processes. In this formulation, the conditional distribution acts as an information state whose dynamics depend on both the applied action and the received observation. The conceptual separation between estimation and control does not imply independence between the two components: the filter determines the information available to the policy, whereas the selected actions affect the future state dynamics and, in some models, the observation mechanism itself.

These works establish that the conditional distribution of the hidden state allows a partially observable problem to be reformulated as a fully observable Markov decision process on a space of probability measures. This reduction makes it possible to define a filtered one-stage cost, a controlled transition kernel on the belief space, and a dynamic programming equation. However, in much of the literature, the observation mechanism remains represented through an abstract stochastic kernel, and the belief update is written directly in terms of the observed value. Less attention is given to settings in which the observation deterministically identifies a measurable region or class of the state space and the resulting belief is necessarily supported on that class.

The framework developed in this paper adopts precisely this structure. The information state is not represented solely by a global probability distribution, but by a pair consisting of the observed class and a conditional measure supported on that class. This formulation preserves explicitly the regional information conveyed by the observation and allows the controlled kernel of the fully observable model to be constructed from class-transition probabilities and the recursive filtering operator. 

\subsection{Research gap and main contributions}

The literature reviewed above provides solid foundations for nonlinear filtering, recursive Bayesian updating, and control under partial observability. The Kushner--Stratonovich and Zakai equations describe the evolution of normalized and unnormalized conditional distributions; recursive Bayesian methods separate prediction and correction; and POMDP theory uses the conditional distribution as a sufficient statistic to reformulate the problem on a belief space.

However, much of this literature represents the observation mechanism through a likelihood function or an abstract stochastic kernel, without explicitly exploiting the structure of the information revealed when the observation only identifies a region of the state space. Works on quantized, censored, event-triggered, and intermittent observations address partial-information mechanisms, but they are usually studied separately and under specific assumptions such as linearity, Gaussian noise, scalar thresholds, or binary sensors.

Consequently, a unified formulation in which the observation is generated by a finite measurable partition of the state space remains comparatively underdeveloped. In this setting, one must specify how the predictive measure is propagated toward each class, how a class-restricted unnormalized conditional measure is defined, how normalization is performed, and how zero-probability branches are treated.

A systematic integration of this observation mechanism with stochastic control is also lacking. When an observation reveals a class, the posterior distribution is supported on that class, so the information state must retain both the observed class and the corresponding conditional measure. This affects the construction of the controlled kernel, the filtered one-stage cost, and the dynamic programming equation.

Numerical approximation introduces an additional difficulty. Rather than approximating a single global density, one must use representations adapted to the support of each observable class and analyze how filtering errors propagate to the information-state kernel and the value function.

\paragraph{Main contributions.}
Motivated by these gaps, this paper develops a unified filtering and control framework for partially observable stochastic systems in which the observation identifies the measurable class containing the hidden state. The main contributions are as follows.

\begin{enumerate}
	\item We introduce an observation mechanism based on a finite partition of the state space, whose classes may contain atomic, continuous, or mixed components. The observation is interpreted as a random variable determined by the class containing the state, including a residual class for states that do not belong to the main observable classes.
	
	\item We formulate the filtering update first at the level of measures. For each observable class, we construct a class-restricted unnormalized conditional measure, and the posterior distribution is obtained by normalizing with the predictive probability of the observed class.
	
	\item We derive an explicit recursive representation of the filter through the preimages of the controlled dynamics and the densities of the state and observation perturbations. This construction treats atomic, continuous, and mixed classes within a unified framework without imposing a common global density from the outset.
	
	\item We establish well-posedness conditions for the filtering operator and specify the treatment of zero-probability branches. In particular, we identify the reachable information states on which the normalization operations are well defined.
	
	\item We introduce an information state of the form
	\[
	I_t=(B_t,z_t),
	\]
	where \(B_t\) is the observed class and \(z_t\) is a probability measure supported on that class. This representation is used to construct the class-transition probabilities and the controlled kernel on the information space.
	
	\item We prove the equivalence between the original partially observable problem and a fully observable Markov decision process defined on the information space. This reduction yields the filtered one-stage cost, the Bellman operator, and the discounted-cost criterion.
	
	\item Under suitable compactness, continuity, and boundedness assumptions, we establish the optimality equation and the existence of stationary optimal policies for the equivalent fully observable model.
	
	\item We propose a class-dependent finite-dimensional approximation. The construction uses families of densities adapted to the support of each class and preserves the restricted measure as the primary object before introducing its numerical representation.
	
	\item We establish an abstract bound for the value-function error induced by approximation of the information-state kernel. This bound links the accuracy of the approximate filter to that of the discounted control problem.
	
	\item We develop a reference model illustrating the partition-based observation mechanism, the recursive filtering update, the construction of the information state, and its approximation by histograms at different resolutions.
\end{enumerate}

The contributions connect three levels that are often studied separately: restriction and normalization of conditional measures on observable classes, reduction of the partially observable problem to a fully observable model, and numerical approximation of the filter together with its effect on the value function. The resulting framework includes threshold, quantized, censored, event-triggered, and intermittent observations as conceptually related cases, while preserving explicitly the measurable structure of the observed classes and their role in the information-state dynamics.

\section{Controlled dynamics and finite partition-based observations}
\label{sec:model}

This section introduces the probabilistic framework, the controlled hidden-state dynamics, and the observation mechanism generated by a finite family of observable classes. The filtering and normalization operators are deliberately postponed to later sections. Throughout the paper, time is discrete and indexed by $t\in\N_0:=\{0,1,2,\ldots\}$.

\subsection{Probability space and measurable structures}
\label{subsec:probability-space}

Let $(\Omega,\F,\Pp)$ be a complete probability space. The hidden state process is denoted by $\{X_t\}_{t\geq 0}$ and takes values in a standard Borel state space $(\Xcal,\B(\Xcal))$. The control process $\{U_t\}_{t\geq 0}$ takes values in a standard Borel action space $(\Ucal,\B(\Ucal))$. For each state $x\in\Xcal$, let $\Ucal(x)\subseteq\Ucal$ denote the nonempty set of admissible actions at $x$. When the admissible action set does not depend on the state, we simply write $\Ucal(x)=\Ucal$.

Two exogenous perturbation sequences are considered:
\[
\{\eta_t\}_{t\geq 1}\subseteq\mathsf E,
\qquad
\{\varepsilon_t\}_{t\geq 1}\subseteq\mathsf V,
\]
where $(\mathsf E,\B(\mathsf E))$ and $(\mathsf V,\B(\mathsf V))$ are standard Borel spaces. Their probability laws are denoted by $\nu_\eta$ and $\nu_\varepsilon$, respectively. When densities exist with respect to fixed $\sigma$-finite reference measures $\lambda_\eta$ and $\lambda_\varepsilon$, they are denoted by $f_\eta$ and $f_\varepsilon$.

We use $\Pcal(\Xcal)$ to denote the space of probability measures on $(\Xcal,\B(\Xcal))$. If $B\in\B(\Xcal)$ and $\mu\in\Pcal(\Xcal)$, the restriction of $\mu$ to $B$ is denoted by
\[
\mu|_B(C):=\mu(C\cap B),
\qquad C\in\B(\Xcal).
\]
Whenever $\mu(B)>0$, the normalized restriction is
\[
\mu^B(C):=\frac{\mu(C\cap B)}{\mu(B)},
\qquad C\in\B(\Xcal).
\]

\subsection{Controlled hidden-state dynamics}
\label{subsec:controlled-dynamics}

The state evolves according to the measurable recursion
\begin{equation}
X_{t+1}=G\bigl(X_t,U_t,\eta_{t+1},\varepsilon_{t+1}\bigr),
\qquad t\in\N_0,
\label{eq:state-dynamics}
\end{equation}
where
\[
G:\Xcal\times\Ucal\times\mathsf E\times\mathsf V\longrightarrow\Xcal
\]
is a Borel measurable transition map. The two perturbations are retained separately because they may represent structurally different sources of uncertainty and will play distinct roles in the reference model developed later in the paper.

\begin{assumption}[Independence structure]
\label{ass:independence}
The following conditions hold.
\begin{enumerate}[label=\textnormal{(I\arabic*)},leftmargin=*]
    \item The sequence $\{\eta_t\}_{t\geq 1}$ is independent and identically distributed with law $\nu_\eta$.
    \item The sequence $\{\varepsilon_t\}_{t\geq 1}$ is independent and identically distributed with law $\nu_\varepsilon$.
    \item The sequences $\{\eta_t\}_{t\geq 1}$ and $\{\varepsilon_t\}_{t\geq 1}$ are mutually independent.
    \item The initial state $X_0$, the sequence $\{\eta_t\}_{t\geq 1}$, and the sequence $\{\varepsilon_t\}_{t\geq 1}$ are mutually independent.
\end{enumerate}
\end{assumption}

For each $(x,u)\in\Xcal\times\Ucal$, the controlled state-transition kernel induced by \cref{eq:state-dynamics} is
\begin{equation*}
Q(C\mid x,u)
:=
\int_{\mathsf E}\int_{\mathsf V}
\one_C\!\left(G(x,u,e,v)\right)
\,\nu_\varepsilon(dv)\,\nu_\eta(de),
\label{eq:controlled-kernel}
\end{equation*}
for every $C\in\B(\Xcal)$. Under the stated measurability assumptions, $Q$ is a stochastic kernel on $\Xcal$ given $\Xcal\times\Ucal$.


\subsection{Finite measurable partition and observable classes}
\label{subsec:partition}

Let
\begin{equation*}
\Pcal_X:=\{A_1,\ldots,A_m\}
\label{eq:primary-partition}
\end{equation*}
be a finite measurable partition of the hidden state space, namely,
\[
A_i\in\B(\Xcal),
\qquad
A_i\cap A_j=\varnothing\ \text{for }i\neq j,
\qquad
\bigcup_{i=1}^m A_i=\Xcal.
\]
The elements of $\Pcal_X$ are the elementary partition cells. The observation mechanism need not distinguish all of them individually. Instead, it identifies $r$ observable classes
\[
O_1,\ldots,O_r,
\]
where each $O_j$ is a union of cells from $\Pcal_X$. More precisely, there exist pairwise disjoint nonempty index sets $J_1,\ldots,J_r\subseteq\{1,\ldots,m\}$ such that
\begin{equation*}
O_j:=\bigcup_{i\in J_j}A_i,
\qquad j=1,\ldots,r.
\label{eq:observable-classes}
\end{equation*}

The complement set
\begin{equation*}
O_{r+1}
:=
\Xcal\setminus\bigcup_{j=1}^{r}O_j,
\label{eq:residual-class}
\end{equation*}
is the residual observable class. When the first $r$ observable classes cover the whole state space, then $O_{r+1}=\varnothing,$ and the residual class is omitted.

The classes in $\Oo$ are allowed to have different measure-theoretic structures. In particular, a class may support:
\begin{enumerate}[label=\textnormal{(\alph*)},leftmargin=*]
    \item a purely atomic conditional law;
    \item an absolutely continuous conditional law;
    \item a mixed conditional law with both atomic and continuous components.
\end{enumerate}
Accordingly, later density representations will be formulated with respect to class-dependent dominating measures rather than a single global Lebesgue measure.

\subsection{Stochastic observation process}
\label{subsec:observation-process}

Let
\[
\mathsf O:=\{1,\ldots,r, r+1\}
\]
be the set observation. The observation process $\{Y_t\}_{t\geq 0}$ is the stochastic process induced by the random occurrence of the events $\{X_t\in O_j\}$. It is defined by
\begin{equation}
Y_t=j
\quad\Longleftrightarrow\quad
X_t\in O_j,
\qquad j\in\mathsf O.
\label{eq:observation-process}
\end{equation}
Equivalently, if $h:\Xcal\to\mathsf O$ is the measurable class-label map
\[
h(x):=j
\quad\text{whenever }x\in O_j,
\]
then
\begin{equation*}
Y_t=h(X_t).
\label{eq:observation-map}
\end{equation*}
Although the map $h$ is deterministic, $\{Y_t\}$ is stochastic because it is generated by the hidden stochastic state process. No additional observation noise is imposed at this stage. The partial observability arises from the fact that $Y_t$ reveals only the class $O_j$ containing $X_t$, not the exact value of $X_t$ within that class.

For every $j\in\mathsf O$,
\begin{equation}
\{Y_t=j\}=\{X_t\in O_j\},
\qquad
\one_{\{Y_t=j\}}=\one_{O_j}(X_t).
\label{eq:event-equivalence}
\end{equation}
This identity will be used repeatedly in the conditional decomposition and filtering results.

\subsection{Observable information and admissible controls}
\label{subsec:information}

The observation filtration is
\begin{equation*}
\Y_t:=\sigma(Y_0,\ldots,Y_t),
\qquad t\in\N_0.
\label{eq:observation-filtration}
\end{equation*}
Since the evolution of the conditional law depends on previously selected controls, the controller's available information is represented by
\begin{equation*}
\Hh_t
:=
\sigma\bigl(Y_0,U_0,Y_1,U_1,\ldots,U_{t-1},Y_t\bigr),
\qquad t\in\N_0,
\label{eq:controller-filtration}
\end{equation*}
with the convention $\Hh_0=\sigma(Y_0)$. An admissible control $U_t$ must be $\Hh_t$-measurable and satisfy the relevant action constraint almost surely.

The filtered conditional distribution and one-step predictive distribution are respectively denoted by
\begin{align}
\pi_t(C)
&:=
\Pp(X_t\in C\mid\Hh_t),
\label{eq:filtered-distribution}
\\
\pi_{t+1}^{-}(C)
&:=
\Pp(X_{t+1}\in C\mid\Hh_t),
\label{eq:predictive-distribution}
\end{align}
for $C\in\B(\Xcal)$. Thus, $\pi_t$ incorporates the class observed at time $t$, while $\pi_{t+1}^{-}$ is computed before observing $Y_{t+1}$.

Under \cref{ass:independence}, the predictive distribution satisfies
\begin{equation*}
\pi_{t+1}^{-}(C)
=
\int_{\Xcal}Q(C\mid x,U_t)\,\pi_t(dx),
\qquad C\in\B(\Xcal).
\label{eq:prediction-kernel-form}
\end{equation*}
The measure $\pi_{t+1}^{-}$ is a probability measure and will therefore be referred to as the \emph{predictive measure}, not as an unnormalized measure. The class-restricted unnormalized measures and their normalization will be introduced in the filtering section.

\subsection{Standing assumptions}
\label{subsec:standing-assumptions}

The following assumptions will remain in force unless explicitly stated otherwise.

\begin{assumption}[Basic model assumptions]
\label{ass:basic-model}
\begin{enumerate}[label=\textnormal{(A\arabic*)},leftmargin=*]
    \item The spaces $\Xcal$, $\Ucal$, $\mathsf E$, and $\mathsf V$ are standard Borel spaces.
    \item The transition map $G$ is Borel measurable.
    \item The independence conditions in \cref{ass:independence} hold.
    \item The sets $A_1,\ldots,A_m$ form a finite measurable partition of $\Xcal$.
    \item The observable classes $O_1,\ldots,O_r$ are measurable unions of partition cells, and $O_{r+1}$ is given by \cref{eq:residual-class} whenever it is nonempty.
    \item Controls are nonanticipative: $U_t$ is $\Hh_t$-measurable for every $t\in\N_0$.
    \item Regular conditional probabilities of $X_t$ given $\Hh_t$ and of $X_{t+1}$ given $\Hh_t$ are represented by versions taking values in $\Pcal(\Xcal)$.
\end{enumerate}
\end{assumption}

\begin{remark}[Class-dependent dominating measures]
\label{rem:dominating-measures}
For each nonempty class $O_j$, later sections may introduce a $\sigma$-finite dominating measure $\lambda_j$ on $(O_j,\B(O_j))$. This allows a unified treatment of atomic, continuous, and mixed filtered laws. No global absolute continuity assumption is imposed in the present section.
\end{remark}

\begin{remark}[Temporal convention]
\label{rem:temporal-convention}
The indexing convention in \cref{eq:state-dynamics} is adopted throughout: the control $U_t$ is selected using $\Hh_t$, the perturbations $(\eta_{t+1},\varepsilon_{t+1})$ act after that decision, and the next observation $Y_{t+1}$ is generated by the resulting state $X_{t+1}$.
\end{remark}

%
%

\section{Conditional Bayes formulas on observable partitions}
\label{sec:conditional-bayes}

This section develops the conditional Bayes identities induced by the elementary partition $\Pcal_X$ and by the coarser observable partition $\Oo$. The results are stated at a fixed time $t\geq 1$, before introducing the full recursive filtering operator. By the temporal convention of \cref{rem:temporal-convention},
\[
\Hh_t=\sigma(\Hh_{t-1} \cup \sigma(Y_t)),
\]
because $U_{t-1}$ is $\Hh_{t-1}$-measurable. Hence, the passage from the predictive law $\pi_t^{-}$ to the filtered law $\pi_t$ is entirely determined by the class-valued observation $Y_t$.

\subsection{Conditional restrictions to elementary partition cells}
\label{subsec:conditional-restrictions}

For each elementary cell $A_i\in\Pcal_X$, define its predictive conditional mass by
\begin{equation}
	p_{t,i}^{-}:=\pi_t^{-}(A_i)
	=\Pp(X_t\in A_i\mid\Hh_{t-1}),
	\qquad i=1,\ldots,m.
	\label{eq:cell-predictive-mass}
\end{equation}
On the event $\{p_{t,i}^{-}>0\}$, the normalized conditional restriction of $\pi_t^{-}$ to $A_i$ is the random probability measure
\begin{equation}
	\pi_t^{A_i,-}(C)
	:=\frac{\pi_t^{-}(C\cap A_i)}{p_{t,i}^{-}},
	\qquad C\in\B(\Xcal).
	\label{eq:cell-conditional-measure}
\end{equation}
Thus, $\pi_t^{A_i,-}$ is supported on $A_i$ and represents the conditional law of $X_t$ given the information available before observing $Y_t$ and the additional event $\{X_t\in A_i\}$.

\begin{lemma}[Conditional restriction to a partition cell]
	\label{lem:cell-restriction}
	Let $\varphi:\Xcal\to\R$ be Borel measurable and integrable with respect to $\pi_t^{-}$. For every $i\in\{1,\ldots,m\}$,
	\begin{equation}
		\E\!\left[
		\varphi(X_t)\one_{A_i}(X_t)
		\mid\Hh_{t-1}
		\right]
		=p_{t,i}^{-}
		\int_{A_i}\varphi(x)\,\pi_t^{A_i,-}(dx)
		\label{eq:cell-restriction-identity}
	\end{equation}
	on the event $\{p_{t,i}^{-}>0\}$. If $p_{t,i}^{-}=0$, then the left-hand side vanishes almost surely and the value assigned to $\pi_t^{A_i,-}$ is immaterial.
\end{lemma}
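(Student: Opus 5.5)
The plan is to work with the regular conditional distribution $\pi_t^{-}$ of $X_t$ given $\Hh_{t-1}$ supplied by (A7). The key fact is the disintegration identity: for any Borel $\psi:\Xcal\to\R$ that is integrable (or nonnegative),
\[
\E\!\left[\psi(X_t)\mid\Hh_{t-1}\right]=\int_{\Xcal}\psi(x)\,\pi_t^{-}(dx)\quad\text{a.s.}
\]
I will apply this with $\psi=\varphi\one_{A_i}$.

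\textbf{Establishing the disintegration identity.} I prove it by the standard machine.
\begin{itemize}
\item For indicators $\psi=\one_C$, the identity is the defining property \eqref{eq:predictive-distribution}, with $t$ in place of $t+1$.
\item Linearity extends it to simple functions.
\item Conditional monotone convergence, together with monotone convergence under $\pi_t^{-}(\omega,\cdot)$, extends it to nonnegative Borel $\psi$.
\item For integrable $\psi$, split $\psi=\psi^+-\psi^-$ and apply the nonnegative case to each part.
\end{itemize}
Integrability of $\varphi$ with respect to $\pi_t^{-}$ makes both pieces finite, so $|\varphi|\one_{A_i}$ is also integrable and the subtraction is legitimate.

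\textbf{Deriving the identity on $\{p_{t,i}^{-}>0\}$.} With $\psi=\varphi\one_{A_i}$, the disintegration gives
\[
\E[\varphi(X_t)\one_{A_i}(X_t)\mid\Hh_{t-1}]=\int_{A_i}\varphi\,d\pi_t^{-}.
\]
On $\{p_{t,i}^{-}>0\}$, definition \eqref{eq:cell-conditional-measure} says $\pi_t^{-}|_{A_i}=p_{t,i}^{-}\,\pi_t^{A_i,-}$ as measures. For indicators this is immediate from the definition, and it extends to integrable functions by the same machine. Hence
\[
\int_{A_i}\varphi\,d\pi_t^{-}=p_{t,i}^{-}\int_{A_i}\varphi\,d\pi_t^{A_i,-},
\]
which is \eqref{eq:cell-restriction-identity}.

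\textbf{The degenerate case $p_{t,i}^{-}=0$.} On this event $\pi_t^{-}(A_i)=0$, so $\int_{A_i}\varphi\,d\pi_t^{-}=0$. Therefore the conditional expectation equals $0$ a.s. there, and the right-hand side with any choice of $\pi_t^{A_i,-}$ multiplied by $0$ also vanishes. This shows that the choice of $\pi_t^{A_i,-}$ is immaterial.

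\textbf{Main obstacle.} The step needing the most care is the measurability bookkeeping. I must check that $\omega\mapsto\int\psi\,d\pi_t^{-}(\omega)$ is $\Hh_{t-1}$-measurable, which holds because $\pi_t^{-}$ is a kernel. I must also read "integrable with respect to $\pi_t^{-}$" as holding $\omega$-a.s. and in a form guaranteeing $\E|\varphi(X_t)|<\infty$, or else interpret the conditional expectation in the generalized sense for the $\psi^{\pm}$ parts. I would handle the latter by working with nonnegative parts throughout and subtracting only where finiteness holds, which is a.s.
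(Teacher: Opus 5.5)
Your proposal is correct and follows the paper's proof: identify the conditional expectation with $\int_{A_i}\varphi\,d\pi_t^{-}$ through the regular conditional law, substitute $\pi_t^{-}(\cdot\cap A_i)=p_{t,i}^{-}\,\pi_t^{A_i,-}$ on $\{p_{t,i}^{-}>0\}$, and note that the integral over $A_i$ vanishes when $p_{t,i}^{-}=0$. The only difference is that you spell out two points the paper compresses into ``by the definition of the predictive law'': the standard-machine extension from indicators to integrable functions, and the caveat that the conditional expectation may need to be taken in the generalized sense.
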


\begin{proof}
	By the definition of the predictive law,
	\[
	\E\!\left[
	\varphi(X_t)\one_{A_i}(X_t)\mid\Hh_{t-1}
	\right]
	=
	\int_{A_i}\varphi(x)\,\pi_t^{-}(dx).
	\]
	On $\{p_{t,i}^{-}>0\}$, substituting \cref{eq:cell-conditional-measure} gives \cref{eq:cell-restriction-identity}. If $p_{t,i}^{-}=0$, then $\pi_t^{-}(A_i)=0$ and the integral over $A_i$ is zero.
\end{proof}

\begin{remark}[Zero-mass cells]
	\label{rem:zero-mass-cells}
	For a globally defined random kernel, one may assign an arbitrary fixed probability measure supported on $A_i$ whenever $p_{t,i}^{-}=0$. All formulas below multiply that branch by its zero conditional mass, so this convention does not affect any conditional expectation or reachable posterior law.
\end{remark}

\subsection{Conditional Bayes formula on the elementary partition}
\label{subsec:bayes-elementary-partition}

The following result is the conditional law of total expectation expressed through the elementary partition cells.

\begin{theorem}[Conditional Bayes formula on a finite partition]
	\label{thm:bayes-finite-partition}
	Let $\varphi:\Xcal\to\R$ be Borel measurable and satisfy
	\[
	\int_{\Xcal}|\varphi(x)|\,\pi_t^{-}(dx)<\infty
	\quad\text{almost surely}.
	\]
	Then
	\begin{equation}
		\E[\varphi(X_t)\mid\Hh_{t-1}]
		=
		\sum_{i=1}^{m}p_{t,i}^{-}
		\int_{A_i}\varphi(x)\,\pi_t^{A_i,-}(dx),
		\label{eq:bayes-finite-partition}
	\end{equation}
	where terms associated with zero predictive mass are taken to be zero. Equivalently, as random probability measures,
	\begin{equation}
		\pi_t^{-}(C)
		=
		\sum_{i=1}^{m}p_{t,i}^{-}\pi_t^{A_i,-}(C),
		\qquad C\in\B(\Xcal).
		\label{eq:predictive-cell-mixture}
	\end{equation}
\end{theorem}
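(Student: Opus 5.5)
The plan is to reduce everything to \cref{lem:cell-restriction} via a finite decomposition of the indicator of $\Xcal$. Since $A_1,\ldots,A_m$ form a finite measurable partition of $\Xcal$ (Assumption (A4)), we have the pointwise identity $\varphi(x)=\sum_{i=1}^m\varphi(x)\one_{A_i}(x)$ for every $x\in\Xcal$. Evaluating at $X_t$ and using linearity of conditional expectation (a finite sum, so no interchange-of-limits issue) gives
\[
\E[\varphi(X_t)\mid\Hh_{t-1}]=\sum_{i=1}^m\E\!\left[\varphi(X_t)\one_{A_i}(X_t)\mid\Hh_{t-1}\right]
\]
almost surely. Each summand is well defined because $|\varphi\one_{A_i}|\le|\varphi|$ is integrable against $\pi_t^{-}$, by the hypothesis.

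Next I apply \cref{lem:cell-restriction} term by term. On $\{p_{t,i}^{-}>0\}$, the $i$-th summand equals $p_{t,i}^{-}\int_{A_i}\varphi\,d\pi_t^{A_i,-}$. On $\{p_{t,i}^{-}=0\}$, it vanishes almost surely, which matches the convention that zero-mass terms are set to zero (cf.\ \cref{rem:zero-mass-cells}). The events $\{p_{t,i}^{-}>0\}$ are $\Hh_{t-1}$-measurable and there are finitely many of them. Hence the two cases can be glued on the finite partition of $\Omega$ generated by these events, and the union of the exceptional null sets is still null. This yields \cref{eq:bayes-finite-partition}.

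For \cref{eq:predictive-cell-mixture}, I take $\varphi=\one_C$, which is bounded and hence integrable. Then the left side is $\pi_t^{-}(C)$ by \cref{eq:predictive-distribution}, shifted to time $t$. On the right side, $\int_{A_i}\one_C\,d\pi_t^{A_i,-}=\pi_t^{A_i,-}(C)$, since $\pi_t^{A_i,-}$ is supported on $A_i$. Alternatively, this is immediate from the definition \cref{eq:cell-conditional-measure}: $p_{t,i}^{-}\pi_t^{A_i,-}(C)=\pi_t^{-}(C\cap A_i)$, and summing over $i$ uses finite additivity of $\pi_t^{-}$ over the disjoint sets $C\cap A_i$.

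The only delicate point is the almost-sure qualifier. The identity for conditional expectations holds up to null sets that a priori depend on $C$, whereas a statement "as random probability measures" should hold simultaneously for all $C$. I would avoid this by working with the regular version of $\pi_t^{-}$ from (A7) and proving the measure identity pathwise via the definitional argument, which holds for every $\omega$ and every $C$ with no exceptional set. The conditional-expectation form \cref{eq:bayes-finite-partition} then follows by integrating $\varphi$ against both sides wherever $\int|\varphi|\,d\pi_t^{-}<\infty$.
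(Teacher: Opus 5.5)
Your proposal is correct and follows the paper's own proof: decompose $\varphi$ over the cells, take conditional expectations term by term using \cref{lem:cell-restriction}, and specialize to $\varphi=\one_C$. Your extra point that the mixture identity holds pathwise for every $C$ simultaneously, using the regular version from (A7), is a valid refinement that the paper leaves implicit.
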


\begin{proof}
	Since $\{A_1,\ldots,A_m\}$ is a measurable partition of $\Xcal$,
	\[
	\varphi(X_t)
	=
	\sum_{i=1}^{m}\varphi(X_t)\one_{A_i}(X_t).
	\]
	Taking conditional expectations with respect to $\Hh_{t-1}$ and applying \cref{lem:cell-restriction} yields \cref{eq:bayes-finite-partition}. Choosing $\varphi=\one_C$ gives \cref{eq:predictive-cell-mixture}.
\end{proof}

\subsection{Observable classes and the residual class}
\label{subsec:observable-residual-classes}

Recall from \cref{eq:observable-classes} that each observable class is a union of elementary cells. Set
\begin{equation*}
	J_{r+1}
	:=
	\{1,\ldots,m\}\setminus\bigcup_{j=1}^{r}J_j
	\label{eq:residual-index-set}
\end{equation*}
whenever $O_{r+1}\neq\varnothing$. Then
\[
O_j=\bigcup_{i\in J_j}A_i,
\qquad j=1,\ldots,r+1.
\]
For every nonempty observable class $O_j$, define
\begin{align}
	p_{t,j}^{\Oo,-}
	&:=\pi_t^{-}(O_j)
	=\Pp(Y_t=j\mid\Hh_{t-1}),
	\label{eq:observable-class-mass}
	\\
	\pi_t^{O_j,-}(C)
	&:=\frac{\pi_t^{-}(C\cap O_j)}{p_{t,j}^{\Oo,-}},
	\qquad C\in\B(\Xcal),
	\label{eq:observable-class-conditional-measure}
\end{align}
on the event $\{p_{t,j}^{\Oo,-}>0\}$. The superscript $\Oo$ in $p_{t,j}^{\Oo,-}$ records that this is the predictive probability of an observable class rather than of an elementary cell.

\begin{proposition}[Observable-class posterior as a cell mixture]
	\label{prop:class-as-cell-mixture}
	For every $j\in\{1,\ldots,r+1\}$ such that $O_j\neq\varnothing$ and $p_{t,j}^{\Oo,-}>0$,
	\begin{equation}
		\pi_t^{O_j,-}(C)
		=
		\sum_{i\in J_j}
		\frac{p_{t,i}^{-}}{p_{t,j}^{\Oo,-}}
		\pi_t^{A_i,-}(C),
		\qquad C\in\B(\Xcal),
		\label{eq:observable-class-cell-mixture}
	\end{equation}
	where indices satisfying $p_{t,i}^{-}=0$ contribute zero. Moreover,
	\begin{equation}
		p_{t,j}^{\Oo,-}
		=
		\sum_{i\in J_j}p_{t,i}^{-}.
		\label{eq:observable-class-cell-mass}
	\end{equation}
\end{proposition}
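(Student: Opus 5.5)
The plan is to work directly with the predictive measure $\pi_t^{-}$ and use two facts: finite additivity, and the fact that $O_j$ is the disjoint union of the cells $A_i$, $i\in J_j$. For $j\le r$ this decomposition holds by definition. For the residual class $j=r+1$ it holds by the definition of $J_{r+1}$. The cells form a partition of $\Xcal$, the $J_1,\ldots,J_r$ are pairwise disjoint, and $O_{r+1}=\Xcal\setminus\bigcup_{j\le r}O_j$. Together these give $O_{r+1}=\bigcup_{i\in J_{r+1}}A_i$, and this union is disjoint.

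First I prove the mass identity \cref{eq:observable-class-cell-mass}. Fix $C\in\B(\Xcal)$. The sets $C\cap A_i$, $i\in J_j$, are pairwise disjoint and their union is $C\cap O_j$, so additivity of $\pi_t^{-}$ gives
\[
\pi_t^{-}(C\cap O_j)=\sum_{i\in J_j}\pi_t^{-}(C\cap A_i).
\]
Taking $C=\Xcal$ yields $p_{t,j}^{\Oo,-}=\sum_{i\in J_j}p_{t,i}^{-}$ by \cref{eq:cell-predictive-mass} and \cref{eq:observable-class-mass}.

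Next, for the mixture identity, I handle each term $\pi_t^{-}(C\cap A_i)$ according to its mass. If $p_{t,i}^{-}>0$, then \cref{eq:cell-conditional-measure} lets me write $\pi_t^{-}(C\cap A_i)=p_{t,i}^{-}\,\pi_t^{A_i,-}(C)$. If $p_{t,i}^{-}=0$, then monotonicity gives $\pi_t^{-}(C\cap A_i)\le\pi_t^{-}(A_i)=0$. That term therefore vanishes, which matches the convention of \cref{rem:zero-mass-cells} regardless of the value assigned to $\pi_t^{A_i,-}$. Substituting these into the additivity identity, and dividing by $p_{t,j}^{\Oo,-}>0$ as \cref{eq:observable-class-conditional-measure} allows, gives \cref{eq:observable-class-cell-mixture}. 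As a consistency check, the weights $p_{t,i}^{-}/p_{t,j}^{\Oo,-}$ sum to one by the mass identity, so the right-hand side is a probability measure.

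The only delicate point is that these are random measures, so the identities must hold pathwise. By (A7) a regular version of $\pi_t^{-}$ exists, and all the manipulations above are performed for each fixed $\omega$ in the event $\{p_{t,j}^{\Oo,-}>0\}$. The equalities therefore hold identically on that event, not merely almost surely. Apart from this, and the residual-class indexing, the argument is routine.
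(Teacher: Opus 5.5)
Your proposal is correct and follows the paper's proof essentially verbatim. Both use finite additivity of $\pi_t^{-}$ over the disjoint sets $C\cap A_i$, $i\in J_j$, substitute $p_{t,i}^{-}\pi_t^{A_i,-}(C)$, take $C=\Xcal$ for the mass identity, and divide by $p_{t,j}^{\Oo,-}$; your extra checks on $J_{r+1}$, the zero-mass cells, and the pathwise use of a regular version spell out what the paper leaves implicit.
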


\begin{proof}
	The cells indexed by $J_j$ are pairwise disjoint and their union is $O_j$. Hence,
	\[
	\pi_t^{-}(C\cap O_j)
	=
	\sum_{i\in J_j}\pi_t^{-}(C\cap A_i)
	=
	\sum_{i\in J_j}p_{t,i}^{-}\pi_t^{A_i,-}(C).
	\]
	Taking $C=\Xcal$ gives \cref{eq:observable-class-cell-mass}; division by $p_{t,j}^{\Oo,-}$ yields \cref{eq:observable-class-cell-mixture}.
\end{proof}

\begin{remark}[Role of the residual class]
	\label{rem:role-residual-class}
	The residual class $O_{r+1}$ is the complement of the explicitly specified observable classes. 
	Thus, $Y_t=r+1$ reveals that $X_t\in O_{r+1}$, and may represent missing or censored information.
\end{remark}

\subsection{Conditional decomposition by observable classes}
\label{subsec:conditional-decomposition}

The next theorem expresses both the predictive and filtered conditional expectations in terms of the observable classes. It is the form of Bayes rule that will be used in the recursive filtering section.

\begin{theorem}[Conditional decomposition by observable classes]
	\label{thm:observable-class-decomposition}
	Let $\varphi:\Xcal\to\R$ be Borel measurable and integrable with respect to $\pi_t^{-}$. Then the predictive conditional expectation satisfies
	\begin{equation}
		\E[\varphi(X_t)\mid\Hh_{t-1}]
		=
		\sum_{j=1}^{r+1}p_{t,j}^{\Oo,-}
		\int_{O_j}\varphi(x)\,\pi_t^{O_j,-}(dx),
		\label{eq:predictive-observable-decomposition}
	\end{equation}
	where the residual term is omitted when $O_{r+1}=\varnothing$.
	
	After observing $Y_t$, the filtered conditional expectation is
	\begin{equation}
		\E[\varphi(X_t)\mid\Hh_t]
		=
		\sum_{j=1}^{r+1}
		\one_{\{Y_t=j\}}
		\int_{O_j}\varphi(x)\,\pi_t^{O_j,-}(dx),
		\label{eq:filtered-observable-decomposition}
	\end{equation}
	almost surely on the union of the reachable events
	\[
	\bigcup_{j=1}^{r+1}
	\bigl\{Y_t=j,\ p_{t,j}^{\Oo,-}>0\bigr\}.
	\]
	Equivalently, on $\{Y_t=j,\ p_{t,j}^{\Oo,-}>0\}$,
	\begin{equation}
		\pi_t(C)
		=
		\pi_t^{O_j,-}(C)
		=
		\frac{\pi_t^{-}(C\cap O_j)}{\pi_t^{-}(O_j)},
		\qquad C\in\B(\Xcal).
		\label{eq:static-bayes-observable-class}
	\end{equation}
\end{theorem}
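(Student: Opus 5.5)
The predictive identity \cref{eq:predictive-observable-decomposition} comes from aggregating \cref{thm:bayes-finite-partition} over the index sets $J_1,\ldots,J_{r+1}$. Since these sets partition $\{1,\ldots,m\}$, we regroup the sum in \cref{eq:bayes-finite-partition} class by class. For each $j$ with $p_{t,j}^{\Oo,-}>0$, \cref{prop:class-as-cell-mixture} gives
\[
\sum_{i\in J_j}p_{t,i}^{-}\int_{A_i}\varphi\,d\pi_t^{A_i,-}=p_{t,j}^{\Oo,-}\int_{O_j}\varphi\,d\pi_t^{O_j,-}.
\]
For $j$ with $p_{t,j}^{\Oo,-}=0$, \cref{eq:observable-class-cell-mass} forces every $p_{t,i}^{-}$, $i\in J_j$, to vanish, so both sides are zero under the zero-mass convention of \cref{rem:zero-mass-cells}. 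The residual class contributes nothing when it is empty.

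For the filtered statement, I would use the structure $\Hh_t=\sigma(\Hh_{t-1}\cup\sigma(Y_t))$ with $Y_t$ finite-valued. Every $H\in\Hh_t$ then has the form $H=\bigcup_j (H_j\cap\{Y_t=j\})$ with $H_j\in\Hh_{t-1}$. Define the candidate
\[
Z:=\sum_j \one_{\{Y_t=j\}}\int_{O_j}\varphi\,d\pi_t^{O_j,-},
\]
setting a term to zero on $\{p_{t,j}^{\Oo,-}=0\}$. Each summand is $\Hh_t$-measurable, because $\pi_t^{O_j,-}$ is an $\Hh_{t-1}$-measurable random measure built from the regular version in (A7). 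It remains to check $\E[\varphi(X_t)\one_H]=\E[Z\one_H]$, and by linearity it suffices to take $H=H_j\cap\{Y_t=j\}$.

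By \cref{eq:event-equivalence}, $\one_{\{Y_t=j\}}=\one_{O_j}(X_t)$. Conditioning on $\Hh_{t-1}$ and applying \cref{lem:cell-restriction}, aggregated over $J_j$, yields
\[
\E[\varphi(X_t)\one_{O_j}(X_t)\one_{H_j}]=\E\Big[\one_{H_j}\,p_{t,j}^{\Oo,-}\int_{O_j}\varphi\,d\pi_t^{O_j,-}\Big].
\]
The same conditioning gives
\[
\E[\one_{H_j}\one_{\{Y_t=j\}}\,W]=\E[\one_{H_j}\,p_{t,j}^{\Oo,-}\,W]\quad\text{for }W:=\int_{O_j}\varphi\,d\pi_t^{O_j,-},
\]
since $W$ is $\Hh_{t-1}$-measurable and $\Pp(Y_t=j\mid\Hh_{t-1})=p_{t,j}^{\Oo,-}$. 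The two expressions match, so $Z=\E[\varphi(X_t)\mid\Hh_t]$ almost surely. The event $\{Y_t=j,\,p_{t,j}^{\Oo,-}=0\}$ is null, because its probability equals $\E[\one_{\{p_{t,j}^{\Oo,-}=0\}}\,p_{t,j}^{\Oo,-}]=0$. Hence the identity holds almost surely on the union of reachable events, which in fact has full measure. Taking $\varphi=\one_C$ and using a countable generating $\pi$-system of $\B(\Xcal)$, available because $\Xcal$ is standard Borel, the identity upgrades to equality of measures, which is \cref{eq:static-bayes-observable-class} for the regular version $\pi_t$.

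I expect the main obstacle to be the integrability and measurability bookkeeping. First, $W$ need not be integrable, so I would apply the argument to $\varphi^{\pm}$ or truncations $\varphi\wedge n$ and pass to the limit by monotone convergence. Second, the product $\one_{H_j}W$ must be handled with conditional expectations of nonnegative variables, relying on the assumed integrability of $\varphi$ with respect to $\pi_t^{-}$.
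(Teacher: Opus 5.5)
Your proposal is correct and follows essentially the paper's route: both arguments use $\Hh_t=\sigma(\Hh_{t-1}\cup\sigma(Y_t))$ and $\{Y_t=j\}=\{X_t\in O_j\}$ to identify the filtered law with the normalized restriction of $\pi_t^{-}$ to $O_j$. The differences are cosmetic. You regroup the cell sum of \cref{thm:bayes-finite-partition} via \cref{prop:class-as-cell-mixture} instead of rerunning that argument on $\Oo$, and you prove the $\varphi$-identity first and then obtain \cref{eq:static-bayes-observable-class} by a countable $\pi$-system argument, whereas the paper goes from sets to functions. What you add is an actual proof of the step the paper only asserts (``conditioning on $\Hh_t$ and restricting to $\{Y_t=j\}$''), checked on the generating sets $H_j\cap\{Y_t=j\}$. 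You also supply the null-branch computation and the $\varphi^{\pm}$ integrability bookkeeping, which the paper leaves implicit.
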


\begin{proof}
	Since $\Oo$ is a finite measurable partition of $\Xcal$, the proof of \cref{eq:predictive-observable-decomposition} is identical to that of \cref{thm:bayes-finite-partition}, with the classes $O_j$ replacing the cells $A_i$.
	
	For the filtered identity, fix $j$ with $p_{t,j}^{\Oo,-}>0$. By \cref{eq:event-equivalence},
	\[
	\{Y_t=j\}=\{X_t\in O_j\}.
	\]
	Because $\Hh_t=\sigma(\Hh_{t-1} \cup \sigma(Y_t))$, conditioning on $\Hh_t$ and restricting to the event $\{Y_t=j\}$ gives
	\[
	\Pp(X_t\in C\mid\Hh_t)
	=
	\frac{\Pp(X_t\in C\cap O_j\mid\Hh_{t-1})}
	{\Pp(X_t\in O_j\mid\Hh_{t-1})},
	\]
	which is \cref{eq:static-bayes-observable-class}. Integrating $\varphi$ with respect to this conditional measure and summing over the disjoint observation events yields \cref{eq:filtered-observable-decomposition}.
\end{proof}

\begin{corollary}[Support of the filtered law]
	\label{cor:filtered-support}
	On the event $\{Y_t=j,\ p_{t,j}^{\Oo,-}>0\}$,
	\begin{equation*}
		\pi_t(O_j)=1.
		\label{eq:filtered-support-class}
	\end{equation*}
	Hence, the observed class is part of the information state, and the conditional law is concentrated on that class.
\end{corollary}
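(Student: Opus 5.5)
The plan is to read the corollary off the identity \cref{eq:static-bayes-observable-class} in \cref{thm:observable-class-decomposition}. That theorem already gives, on the reachable event $\{Y_t=j,\ p_{t,j}^{\Oo,-}>0\}$, the representation
\[
\pi_t(C)=\frac{\pi_t^{-}(C\cap O_j)}{\pi_t^{-}(O_j)},\qquad C\in\B(\Xcal).
\]
I will apply it with the single choice $C=O_j$. The numerator then becomes $\pi_t^{-}(O_j\cap O_j)=\pi_t^{-}(O_j)=p_{t,j}^{\Oo,-}$, which is strictly positive on the event in question, so the quotient equals $1$ and $\pi_t(O_j)=1$ there.

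The only point needing care is the ``almost surely'' qualifier. Both $\pi_t$ and $\pi_t^{-}$ are regular conditional probabilities, and by \cref{ass:basic-model}(A7) they are fixed versions with values in $\Pcal(\Xcal)$. The identity of the theorem therefore holds for each fixed $C$ off a null set. Since only the one set $C=O_j$ is used, one null set for each $j$ suffices, and a finite union over $j\in\{1,\ldots,r+1\}$ keeps the exceptional set null. No countable-generator argument is needed.

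As a cross-check that does not depend on the Bayes quotient, I would also argue directly from \cref{eq:event-equivalence}. The event $\{X_t\in O_j\}=\{Y_t=j\}$ lies in $\sigma(Y_t)\subseteq\Hh_t$, so $\pi_t(O_j)=\Pp(X_t\in O_j\mid\Hh_t)=\one_{\{Y_t=j\}}$ almost surely, which equals $1$ on $\{Y_t=j\}$. This route shows that the positivity condition only serves to identify $\pi_t$ with the normalized restriction $\pi_t^{O_j,-}$; it is not needed for the support statement itself.

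The interpretive sentence in the corollary, that the observed class belongs to the information state, is motivation rather than a mathematical claim. I would just record that $\pi_t$ is determined by the pair $(Y_t,\pi_t^{O_{Y_t},-})$ and is concentrated on $O_{Y_t}$. I do not see any real obstacle here; the only subtlety is the null-set bookkeeping, which is handled as described above.
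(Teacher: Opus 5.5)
Your proposal is correct and matches the paper's proof, which simply sets $C=O_j$ in \cref{eq:static-bayes-observable-class}. Your cross-check via $\pi_t(O_j)=\one_{\{Y_t=j\}}$ almost surely is also valid, and it correctly shows that the positivity condition removes only a null event.
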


\begin{proof}
	Set $C=O_j$ in \cref{eq:static-bayes-observable-class}.
\end{proof}

\subsection{Atomic, continuous, and mixed class laws}
\label{subsec:atomic-continuous-mixed}

The measure formulation in \cref{thm:observable-class-decomposition} does not require absolute continuity. When a density representation is useful, it must be adapted to the measure-theoretic structure of each observable class.

For a nonempty class $O_j$, let $\lambda_j$ be a $\sigma$-finite measure on $(O_j,\B(O_j))$. Assume that, on the reachable branch $\{p_{t,j}^{\Oo,-}>0\}$,
\begin{equation*}
	\pi_t^{O_j,-}\ll\lambda_j.
	\label{eq:class-absolute-continuity}
\end{equation*}
Its class-dependent conditional density is then
\begin{equation}
	z_t^j
	:=
	\frac{d\pi_t^{O_j,-}}{d\lambda_j},
	\qquad
	z_t^j\geq 0,
	\qquad
	\int_{O_j}z_t^j(x)\,\lambda_j(dx)=1.
	\label{eq:class-conditional-density}
\end{equation}
Consequently,
\begin{equation*}
	\int_{O_j}\varphi(x)\,\pi_t^{O_j,-}(dx)
	=
	\int_{O_j}\varphi(x)z_t^j(x)\,\lambda_j(dx).
	\label{eq:class-density-expectation}
\end{equation*}

Three cases are especially relevant.

\paragraph{Purely atomic classes.}
Suppose
\[
O_j=\{a_{j,1},\ldots,a_{j,n_j}\}.
\]
A convenient dominating measure is
\begin{equation*}
	\lambda_j^{\mathrm a}
	:=
	\sum_{k=1}^{n_j}\delta_{a_{j,k}}.
	\label{eq:atomic-dominating-measure}
\end{equation*}
Then $z_t^j(a_{j,k})$ is the conditional probability mass assigned to $a_{j,k}$, and
\[
\sum_{k=1}^{n_j}z_t^j(a_{j,k})=1.
\]
In particular, if $O_j=\{a_j\}$ is a singleton, then $\pi_t^{O_j,-}=\delta_{a_j}$.

\paragraph{Absolutely continuous classes.}
If $O_j\subseteq\R^d$ is Borel and the conditional law is absolutely continuous with respect to Lebesgue measure, one may take
\begin{equation}
	\lambda_j^{\mathrm c}
	:=
	\operatorname{Leb}_d|_{O_j}.
	\label{eq:continuous-dominating-measure}
\end{equation}
The function $z_t^j$ is then the usual conditional probability density on $O_j$.

\paragraph{Mixed classes.}
Suppose that $O_j$ contains an absolutely continuous region $C_j$ and a finite or countable atomic set $D_j$, with $C_j\cap D_j=\varnothing$. A suitable dominating measure is
\begin{equation}
	\lambda_j^{\mathrm m}
	:=
	\operatorname{Leb}_d|_{C_j}
	+
	\sum_{a\in D_j}w_{j,a}\delta_a,
	\qquad w_{j,a}>0.
	\label{eq:mixed-dominating-measure}
\end{equation}
The corresponding density $z_t^j$ simultaneously represents the continuous density on $C_j$ and the atomic masses on $D_j$; the normalization condition in \cref{eq:class-conditional-density} becomes
\begin{equation*}
	\int_{C_j}z_t^j(x)\,dx
	+
	\sum_{a\in D_j}w_{j,a}z_t^j(a)
	=1.
	\label{eq:mixed-density-normalization}
\end{equation*}

\begin{proposition}[Density form of the observable-class decomposition]
	\label{prop:density-observable-decomposition}
	Assume that \cref{eq:class-absolute-continuity} holds for every reachable nonempty class $O_j$. Then
	\begin{align}
		\E[\varphi(X_t)\mid\Hh_{t-1}]
		&=
		\sum_{j=1}^{r+1}p_{t,j}^{\Oo,-}
		\int_{O_j}\varphi(x)z_t^j(x)\,\lambda_j(dx),
		\label{eq:predictive-density-decomposition}
		\\
		\E[\varphi(X_t)\mid\Hh_t]
		&=
		\sum_{j=1}^{r+1}\one_{\{Y_t=j\}}
		\int_{O_j}\varphi(x)z_t^j(x)\,\lambda_j(dx),
		\label{eq:filtered-density-decomposition}
	\end{align}
	where the residual term is omitted when $O_{r+1}=\varnothing$.
\end{proposition}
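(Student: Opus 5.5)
The plan is to obtain both identities directly from \cref{thm:observable-class-decomposition}, using the Radon--Nikodym representation \cref{eq:class-conditional-density} to rewrite each class integral. No new probabilistic argument is needed. The only work is to check that the change of measure is legitimate term by term, and that the zero-probability and empty-class branches are handled consistently with the earlier conventions.

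First, fix a nonempty class $O_j$ on the reachable event $\{p_{t,j}^{\Oo,-}>0\}$. By assumption, $\pi_t^{O_j,-}\ll\lambda_j$ with density $z_t^j$. I would apply the standard chain rule for Radon--Nikodym derivatives:
\[
\int_{O_j}\varphi(x)\,\pi_t^{O_j,-}(dx)=\int_{O_j}\varphi(x)z_t^j(x)\,\lambda_j(dx).
\]
This holds first for indicators $\varphi=\one_C$ by the definition of the density, then for simple functions by linearity, then for nonnegative $\varphi$ by monotone convergence, and finally for integrable $\varphi$ by splitting into $\varphi^+$ and $\varphi^-$.

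Integrability transfers as follows. Since $\pi_t^{O_j,-}\le \pi_t^{-}/p_{t,j}^{\Oo,-}$ on $O_j$, integrability of $\varphi$ under $\pi_t^{-}$ gives integrability under $\pi_t^{O_j,-}$. Hence $\varphi z_t^j\in L^1(\lambda_j)$.

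Second, I substitute this identity into \cref{eq:predictive-observable-decomposition} to obtain \cref{eq:predictive-density-decomposition}. The terms with $p_{t,j}^{\Oo,-}=0$ contribute zero on both sides, whatever version of $z_t^j$ is chosen there, in the spirit of \cref{rem:zero-mass-cells}. The residual term is dropped when $O_{r+1}=\varnothing$.

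Third, I substitute the same identity into \cref{eq:filtered-observable-decomposition} to obtain \cref{eq:filtered-density-decomposition}. This identity holds almost surely on the union of the reachable events. The complementary event $\{Y_t=j,\ p_{t,j}^{\Oo,-}=0\}$ is $\Pp$-null, because
\[
\Pp(Y_t=j,\ p_{t,j}^{\Oo,-}=0)=\E\bigl[p_{t,j}^{\Oo,-}\one_{\{p_{t,j}^{\Oo,-}=0\}}\bigr]=0.
\]
Consequently the filtered identity holds almost surely everywhere.

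The main, though mild, obstacle is measurability in $\omega$. The densities $z_t^j$ are random and must be chosen as jointly measurable versions, so that the right-hand sides are $\Hh_{t-1}$- and $\Hh_t$-measurable random variables. I would handle this by taking $z_t^j(\omega,x)$ to be a jointly measurable Radon--Nikodym derivative of the kernel $\pi_t^{O_j,-}$ with respect to the fixed $\sigma$-finite measure $\lambda_j$. Such a version exists on standard Borel spaces by the usual martingale or Doob construction of kernel densities, and Fubini's theorem then gives measurability of $\omega\mapsto\int\varphi z_t^j\,d\lambda_j$.
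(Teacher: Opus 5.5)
Your proposal is correct and takes essentially the same route as the paper, whose proof simply substitutes the identity $\int_{O_j}\varphi\,d\pi_t^{O_j,-}=\int_{O_j}\varphi z_t^j\,d\lambda_j$ into the two decompositions of \cref{thm:observable-class-decomposition}. Your added checks are sound refinements the paper leaves implicit: the integrability transfer, and the observation that $\{Y_t=j,\ p_{t,j}^{\Oo,-}=0\}$ is null, so the filtered identity holds almost surely. The measurability step is not actually needed: for each $\omega$ on the reachable branch, the density integral equals $\int_{O_j}\varphi\,d\pi_t^{O_j,-}$, which is already measurable and does not depend on the version of $z_t^j$.
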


\begin{proof}
	Substitute \cref{eq:class-density-expectation} into \cref{eq:predictive-observable-decomposition,eq:filtered-observable-decomposition}.
\end{proof}

\begin{remark}[Measures as primary objects]
	\label{rem:measures-primary}
	The conditional measure $\pi_t^{O_j,-}$ is the primary object. The density $z_t^j$ is a representation relative to a class-dependent dominating measure and is introduced only when absolute continuity is available. This distinction allows the same filtering theory to cover atomic, continuous, and mixed observation classes without imposing a global Lebesgue density on the hidden state.
\end{remark}

%
%

\section{Recursive filtering under partition-based observations}
\label{sec:recursive-filtering}

This section combines the controlled state dynamics of \cref{sec:model} with the conditional Bayes identities of \cref{sec:conditional-bayes}. The resulting recursion has three distinct stages: predictive propagation, restriction to the observed class, and normalization. The distinction between these stages is essential. Predictive propagation produces a probability measure, whereas restriction to a class produces a finite subprobability measure. Only the latter will be called an unnormalized conditional measure.

Throughout this section, \(\mu\in\Pcal(\Xcal)\), \(u\in\Ucal\), and \(j\in\{1,\ldots,r+1\}\), with the last index omitted when \(O_{r+1}=\varnothing\).

\subsection{Predictive propagation}
\label{subsec:predictive-propagation}

For every action \(u\in\Ucal\), define the linear propagation operator
\begin{equation*}
	\mathcal L_u:\Pcal(\Xcal)\longrightarrow\Pcal(\Xcal)
	\label{eq:predictive-operator-domain}
\end{equation*}
by
\begin{equation}
	(\mathcal L_u\mu)(C)
	:=
	\int_{\Xcal}Q(C\mid x,u)\,\mu(dx),
	\qquad C\in\B(\Xcal).
	\label{eq:predictive-operator}
\end{equation}
Equivalently, using the transition map \(G\),
\begin{equation*}
	(\mathcal L_u\mu)(C)
	=\! \!
	\int_{\Xcal}\int_{\mathsf E}\int_{\mathsf V}\!\!
	\one_C\!\left(G(x,u,e,v)\right)
	\nu_\varepsilon(dv)\,\nu_\eta(de)\,\mu(dx).
	\label{eq:predictive-operator-G}
\end{equation*}
Since \(Q(\Xcal\mid x,u)=1\),
\[
(\mathcal L_u\mu)(\Xcal)=1.
\]
Thus, \(\mathcal L_u\mu\) is a probability measure and not an unnormalized filter.

\begin{proposition}[One-step predictive propagation]
	\label{prop:one-step-prediction}
	Under \cref{ass:independence,ass:basic-model}, the conditional predictive law satisfies
	\begin{equation}
		\pi_{t+1}^{-}=\mathcal L_{U_t}\pi_t
		\qquad \Pp\text{-almost surely}.
		\label{eq:predictive-recursion}
	\end{equation}
	In particular, for every bounded Borel function \(\varphi:\Xcal\to\R\),
\begin{equation}
	\begin{aligned}
		\E[\varphi(X_{t+1})\mid\Hh_t]
		&=
		\int_{\Xcal}
		\varphi(x')\,
		(\mathcal L_{U_t}\pi_t)(dx')
		\\
		&=
		\int_{\Xcal}\int_{\mathsf E}\int_{\mathsf V}
		\varphi\!\left(G(x,U_t,e,v)\right)
		\\
		&\qquad \qquad \cdot
		\nu_\varepsilon(dv)\,
		\nu_\eta(de)\,
		\pi_t(dx).
	\end{aligned}
	\label{eq:prediction-expectation}
\end{equation}
\end{proposition}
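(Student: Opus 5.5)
The plan is to prove the expectation identity \cref{eq:prediction-expectation} for bounded Borel \(\varphi\) first, and then specialize to \(\varphi=\one_C\) to obtain \cref{eq:predictive-recursion}. Start from \cref{eq:state-dynamics}, which gives \(\varphi(X_{t+1})=\varphi(G(X_t,U_t,\eta_{t+1},\varepsilon_{t+1}))\). The core step is a conditional freezing lemma. We need that \((\eta_{t+1},\varepsilon_{t+1})\) is independent of \(\sigma(X_t,\Hh_t)\), and that its joint law is the product \(\nu_\eta\otimes\nu_\varepsilon\).

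\textbf{Independence.} The product form of the law follows from (I1)--(I3) in \cref{ass:independence}. For independence from \(\sigma(X_t,\Hh_t)\), argue by induction on \(t\) that \(X_0,\ldots,X_t\), \(Y_0,\ldots,Y_t\), and \(U_0,\ldots,U_t\) are measurable with respect to
\[
\mathcal G_t:=\sigma\bigl(X_0,\eta_1,\ldots,\eta_t,\varepsilon_1,\ldots,\varepsilon_t,\xi\bigr).
\]
Here \(\xi\) stands for any auxiliary randomization used by the controller. The inclusion holds because \(Y_s=h(X_s)\), \(U_s\) is \(\Hh_s\)-measurable by (A6), and \(X_{s+1}\) is a measurable function of \(X_s\), \(U_s\), \(\eta_{s+1}\), \(\varepsilon_{s+1}\). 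Then (I4) together with the i.i.d. structure gives that \((\eta_{t+1},\varepsilon_{t+1})\) is independent of \(\mathcal G_t\). If controls may be randomized, one must assume the randomization is independent of future noises. Otherwise \(U_t\) is a function of the \(Y\)'s alone and no such assumption is needed.

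\textbf{Freezing.} Apply the standard freezing lemma to the \(\sigma(X_t,\Hh_t)\)-measurable pair \((X_t,U_t)\) and the independent noise. With \(\Phi(x,u):=\int\!\int\varphi(G(x,u,e,v))\,\nu_\varepsilon(dv)\,\nu_\eta(de)\), this gives
\[
\E[\varphi(X_{t+1})\mid X_t,\Hh_t]=\Phi(X_t,U_t).
\]
Measurability of \(\Phi\) follows from (A2) and Fubini, and \(\Phi(x,u)=\int\varphi\,dQ(\cdot\mid x,u)\).

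\textbf{Conditioning down.} Next condition down to \(\Hh_t\) via the tower property. Since \(U_t\) is \(\Hh_t\)-measurable, use the regular conditional distribution \(\pi_t\) of \(X_t\) given \(\Hh_t\) from (A7). The disintegration lemma for a jointly measurable \(\Phi\) yields
\[
\E[\Phi(X_t,U_t)\mid\Hh_t]=\int\Phi(x,U_t)\,\pi_t(dx)\quad\text{a.s.}
\]
This is proved first for \(\Phi=\one_{D\times E}\), where it reduces to \(\one_E(U_t)\pi_t(D)\), and then extended by a monotone class argument. Rewriting the right-hand side by Fubini gives both forms in \cref{eq:prediction-expectation}, the first via the definition \cref{eq:predictive-operator} of \(\mathcal L_u\).

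\textbf{From expectations to measures.} Taking \(\varphi=\one_C\) shows \(\pi_{t+1}^-(C)=(\mathcal L_{U_t}\pi_t)(C)\) a.s. for each \(C\). Because \(\B(\Xcal)\) is countably generated (A1), the null sets can be chosen uniformly over a countable \(\pi\)-system generating \(\B(\Xcal)\). Both sides are probability measures, so the \(\pi\)-\(\lambda\) theorem gives equality as measures a.s.

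\textbf{Main obstacle.} The delicate point is the independence step: that the future noise is independent of the entire controller information together with \(X_t\), even though \(U_t\) depends on past observations. The inductive measurability with respect to \(\mathcal G_t\) is where this must be handled carefully, including making explicit the assumption about randomized controls.
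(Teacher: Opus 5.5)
Your proposal is correct and follows essentially the same route as the paper, which (in the proof and in \cref{app:recursive-filter-proof}) conditions first on \(\sigma(\Hh_t\cup\sigma(X_t))\) using the independence of \((\eta_{t+1},\varepsilon_{t+1})\), then passes to \(\Hh_t\) by the tower property, and finally specializes to \(\varphi=\one_C\). Your additions fill in details the paper only asserts: the inductive verification of that independence (together with the observation that (A6) makes \(U_t\) \(\sigma(Y_0,\ldots,Y_t)\)-measurable, so no randomization issue arises), and the countable \(\pi\)-system argument that upgrades setwise almost-sure equality to almost-sure equality of measures.
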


\begin{proof}
	Condition on \(\Hh_t\), use the fact that \(U_t\) is \(\Hh_t\)-measurable, and apply the independence of \((\eta_{t+1},\varepsilon_{t+1})\) from \(\Hh_t\) and \(X_t\). The resulting conditional expectation is the integral in \cref{eq:prediction-expectation}; choosing \(\varphi=\one_C\) gives \cref{eq:predictive-recursion}.
\end{proof}

\subsection{Class-restricted unnormalized conditional measures}
\label{subsec:class-restricted-measures}

For a nonempty observable class \(O_j\), define the \emph{class-restricted unnormalized conditional measure}
\begin{equation}
	\widetilde{\mathcal L}_{u,j}\mu(C)
	:=
	(\mathcal L_u\mu)(C\cap O_j),
	\qquad C\in\B(\Xcal).
	\label{eq:class-restricted-measure}
\end{equation}
Its total mass is
\begin{equation}
	p_j(\mu,u)
	:=
	\widetilde{\mathcal L}_{u,j}\mu(\Xcal)
	=
	(\mathcal L_u\mu)(O_j).
	\label{eq:class-predictive-probability}
\end{equation}
The map \(p_j\) is the one-step predictive probability of observing class \(j\). In particular,
\begin{equation*}
	p_j(\pi_t,U_t)
	=\Pp(Y_{t+1}=j\mid\Hh_t).
	\label{eq:observation-predictive-probability}
\end{equation*}
Moreover,
\begin{equation*}
	\sum_{j=1}^{r+1}p_j(\mu,u)=1,
	\label{eq:class-probabilities-sum}
\end{equation*}
with the residual term omitted when \(O_{r+1}=\varnothing\).

Using \cref{eq:predictive-operator-G}, the restricted measure admits the explicit representation
\begin{equation*}
	\begin{split}
		\widetilde{\mathcal L}_{u,j}\mu(C)
		={}&\int_{\Xcal}\int_{\mathsf E}\int_{\mathsf V}
		\one_{C\cap O_j}\!\left(G(x,u,e,v)\right)
		\\
		&\hspace{1.6cm}\cdot
		\nu_\varepsilon(dv)\,\nu_\eta(de)\,\mu(dx).
	\end{split}
	\label{eq:class-restricted-G}
\end{equation*}
Therefore, \(\widetilde{\mathcal L}_{u,j}\mu\) is a finite nonnegative measure supported on \(O_j\), with total mass at most one. It is a probability measure only when \(p_j(\mu,u)=1\).

\begin{proposition}[Linearity before normalization]
	\label{prop:linearity-unnormalized}
	For fixed \((u,j)\), the map
	\[
	\mu\longmapsto\widetilde{\mathcal L}_{u,j}\mu
	\]
	is positive and affine on \(\Pcal(\Xcal)\). More generally, it extends uniquely to a positive linear operator on the cone of finite nonnegative measures on \(\Xcal\).
\end{proposition}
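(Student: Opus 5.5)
The plan is to define the extension explicitly and then read off positivity, linearity, affinity on \(\Pcal(\Xcal)\), and uniqueness from that definition. Let \(\mathcal M_+(\Xcal)\) denote the cone of finite nonnegative measures on \((\Xcal,\B(\Xcal))\). For \(\mu\in\mathcal M_+(\Xcal)\) I will set
\[
\widetilde{\mathcal L}_{u,j}\mu(C):=\int_{\Xcal}Q(C\cap O_j\mid x,u)\,\mu(dx),\qquad C\in\B(\Xcal).
\]
On \(\Pcal(\Xcal)\) this coincides with \cref{eq:class-restricted-measure} by \cref{eq:predictive-operator}.

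\emph{Well-definedness.} For fixed \(u\), the map \(x\mapsto Q(C\cap O_j\mid x,u)\) is Borel and takes values in \([0,1]\), because \(Q\) is a stochastic kernel. The integral is therefore finite and bounded by \(\mu(\Xcal)\). Countable additivity in \(C\) follows from countable additivity of \(Q(\cdot\cap O_j\mid x,u)\) together with monotone convergence. Hence \(\widetilde{\mathcal L}_{u,j}\mu\in\mathcal M_+(\Xcal)\), and it is supported on \(O_j\).

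\emph{Positivity and linearity.} Positivity holds because the integrand is nonnegative. Linearity on the cone means
\[
\widetilde{\mathcal L}_{u,j}(a\mu+b\nu)=a\widetilde{\mathcal L}_{u,j}\mu+b\widetilde{\mathcal L}_{u,j}\nu
\]
for \(a,b\ge0\), and this is just linearity of the integral in the integrating measure. It can be checked on indicators, then simple functions, then by monotone convergence, or it can be taken as standard. Affinity on \(\Pcal(\Xcal)\) is the special case \(a=\theta\), \(b=1-\theta\) with \(\theta\in[0,1]\). I will also note that the restricted map is not linear on \(\Pcal(\Xcal)\) itself only because \(\Pcal(\Xcal)\) is not closed under general positive combinations.

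\emph{Uniqueness.} Let \(T\) be any positive linear, or merely additive and positively homogeneous, operator on \(\mathcal M_+(\Xcal)\) that agrees with \(\widetilde{\mathcal L}_{u,j}\) on \(\Pcal(\Xcal)\). Any nonzero \(\mu\in\mathcal M_+(\Xcal)\) can be written as \(\mu=\mu(\Xcal)\,\bar\mu\) with \(\bar\mu:=\mu/\mu(\Xcal)\in\Pcal(\Xcal)\). Homogeneity then gives
\[
T\mu=\mu(\Xcal)\,\widetilde{\mathcal L}_{u,j}\bar\mu=\widetilde{\mathcal L}_{u,j}\mu .
\]
For the zero measure, \(T0=0\) by homogeneity, which matches the explicit formula.

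The main subtlety is the uniqueness step: the extension is unique only among operators that are positively homogeneous. That requirement is part of what "linear" means, so no continuity assumption is needed. I will also take care to justify measurability of \(x\mapsto Q(C\cap O_j\mid x,u)\) for fixed \(u\) by appealing to the kernel property of \(Q\), which rests on Borel measurability of \(G\) and Fubini's theorem.
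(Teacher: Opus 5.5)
Your proposal is correct and follows the same route as the paper, which simply reads positivity and linearity off the integral representation of \(\widetilde{\mathcal L}_{u,j}\mu\) in \cref{eq:class-restricted-measure}, i.e.\ integration of \(x\mapsto Q(C\cap O_j\mid x,u)\) against \(\mu\). Your homogeneity argument for uniqueness, writing \(\mu=\mu(\Xcal)\,\bar\mu\) with \(\bar\mu\in\Pcal(\Xcal)\), supplies a step the paper leaves implicit.
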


\begin{proof}
	The assertion follows directly from the integral representation in \cref{eq:class-restricted-G}.
\end{proof}

\subsection{Normalized conditional filter}
\label{subsec:normalized-filter}

Whenever \(p_j(\mu,u)>0\), define the normalized class update
\begin{equation}
	\Phi_j(\mu,u)(C)
	:=
	\frac{\widetilde{\mathcal L}_{u,j}\mu(C)}{p_j(\mu,u)},
	\qquad C\in\B(\Xcal).
	\label{eq:normalized-class-update}
\end{equation}
Then \(\Phi_j(\mu,u)\in\Pcal(\Xcal)\) and
\begin{equation*}
	\Phi_j(\mu,u)(O_j)=1.
	\label{eq:update-supported-on-class}
\end{equation*}
Thus, the observation identifies the support class, whereas \(\Phi_j(\mu,u)\) describes the remaining uncertainty inside that class.

To define \(\Phi_j\) on all of \(\Pcal(\Xcal)\times\Ucal\), including zero-mass branches, we adopt the following standing convention.

\begin{assumption}[Unreachable zero-mass branches and global extension]
	\label{ass:zero-mass-branches}
	For each nonempty class \(O_j\), fix a reference probability measure \(\rho_j\in\Pcal(\Xcal)\) satisfying \(\rho_j(O_j)=1\). A branch for which \(p_j(\mu,u)=0\) is assumed to be unreachable from \((\mu,u)\). On such a branch, define
	\begin{equation}
		\Phi_j(\mu,u):=\rho_j.
		\label{eq:zero-mass-extension}
	\end{equation}
	This arbitrary extension is used only to make \(\Phi_j\) a globally defined mapping; it has no effect on any reachable posterior law, transition probability, or expected cost.
\end{assumption}

Accordingly,
\begin{equation}
	\Phi_j(\mu,u)
	=
	\begin{cases}
		\displaystyle
		\frac{\widetilde{\mathcal L}_{u,j}\mu}
		{p_j(\mu,u)},
		& p_j(\mu,u)>0,\\[2.2ex]
		\rho_j,
		& p_j(\mu,u)=0.
	\end{cases}
	\label{eq:global-filter-map}
\end{equation}

\begin{remark}[Why the zero-mass convention is harmless]
	\label{rem:zero-mass-harmless}
	For every \(t\),
	\[
	\Pp\bigl(Y_{t+1}=j,\ p_j(\pi_t,U_t)=0\bigr)=0.
	\]
	Indeed, \(p_j(\pi_t,U_t)\) is a version of the conditional probability of \(\{Y_{t+1}=j\}\) given \(\Hh_t\). Therefore, the second branch in \cref{eq:global-filter-map} cannot be selected along an admissible sample path except on a null event.
\end{remark}

\subsection{Density representation by class}
\label{subsec:recursive-density-class}

The measure recursion above is primary and does not require a density. Suppose now that, for a given nonempty class \(O_j\), a \(\sigma\)-finite measure \(\lambda_j\) on \((O_j,\B(O_j))\) satisfies
\begin{equation}
	\widetilde{\mathcal L}_{u,j}\mu\ll\lambda_j.
	\label{eq:unnormalized-class-absolute-continuity}
\end{equation}
Define the unnormalized class density
\begin{equation}
	\widetilde z_j(\,\cdot\,;\mu,u)
	:=
	\frac{d\widetilde{\mathcal L}_{u,j}\mu}{d\lambda_j}.
	\label{eq:unnormalized-class-density}
\end{equation}
Then
\begin{equation*}
	p_j(\mu,u)
	=
	\int_{O_j}\widetilde z_j(x';\mu,u)\,\lambda_j(dx').
	\label{eq:class-mass-density}
\end{equation*}
If this quantity is positive, the normalized density of \(\Phi_j(\mu,u)\) with respect to \(\lambda_j\) is
\begin{equation*}
	z_j(x';\mu,u)
	=
	\frac{\widetilde z_j(x';\mu,u)}
	{\displaystyle\int_{O_j}\widetilde z_j(y;\mu,u)\,\lambda_j(dy)}.
	\label{eq:normalized-class-density-recursion}
\end{equation*}
This formulation applies without change when \(\lambda_j\) is atomic, continuous, or mixed as in \cref{subsec:atomic-continuous-mixed}.

A useful sufficient condition for \cref{eq:unnormalized-class-absolute-continuity} is the existence of a class-dependent transition density. Assume that, for every \((x,u)\),
\begin{equation}
	Q(C\cap O_j\mid x,u)
	=
	\int_{C\cap O_j}q_j(x'\mid x,u)\,\lambda_j(dx'),
	\label{eq:class-transition-density}
\end{equation}
where \(C\in\B(\Xcal),\) \((x',x,u)\mapsto q_j(x'\mid x,u)\) is nonnegative and jointly measurable. Then
\begin{equation}
	\widetilde z_j(x';\mu,u)
	=
	\one_{O_j}(x')
	\int_{\Xcal}q_j(x'\mid x,u)\,\mu(dx)
	\quad \lambda_j\text{-a.e.}
	\label{eq:explicit-unnormalized-density}
\end{equation}

\subsection{Main recursive filtering theorem}
\label{subsec:main-filtering-theorem}

\begin{theorem}[Recursive filter under finite partition-based observations]
	\label{thm:main-recursive-filter}
	Let \(\{X_t,U_t,Y_t\}_{t\geq0}\) satisfy \cref{ass:independence,ass:basic-model,ass:zero-mass-branches}. Then, for every \(t\in\N_0\), the following assertions hold almost surely.
	
	\begin{enumerate}[label=\textnormal{(\roman*)},leftmargin=*]
		\item The one-step predictive law is
		\begin{equation*}
			\pi_{t+1}^{-}=\mathcal L_{U_t}\pi_t.
			\label{eq:main-theorem-prediction}
		\end{equation*}
		
		\item For every nonempty observable class \(O_j\), the class-restricted unnormalized conditional measure is
		\begin{equation}
			\widetilde\pi_{t+1}^{\,j}
			:=\widetilde{\mathcal L}_{U_t,j}\pi_t,
			\label{eq:main-theorem-unnormalized}
		\end{equation}
		with the explicit representation
		\begin{equation*}
			\begin{split}
				\widetilde\pi_{t+1}^{\,j}(C)
				={}&\int_{\Xcal}\int_{\mathsf E}\int_{\mathsf V}
				\one_{C\cap O_j}\!\left(G(x,U_t,e,v)\right)
				\\
				&\hspace{1.3cm}\times
				\nu_\varepsilon(dv)\,\nu_\eta(de)\,\pi_t(dx),
			\end{split}
			\label{eq:main-theorem-integral}
		\end{equation*}
		for all \(C\in\B(\Xcal)\).
		
		\item Its total mass is the predictive probability of the next observation:
		\begin{equation*}
			\widetilde\pi_{t+1}^{\,j}(\Xcal)
			=p_j(\pi_t,U_t)
			=\Pp(Y_{t+1}=j\mid\Hh_t).
			\label{eq:main-theorem-observation-probability}
		\end{equation*}
		
		\item On the reachable event
		\[
		\{Y_{t+1}=j,\ p_j(\pi_t,U_t)>0\},
		\]
		the filtered conditional law is
		\begin{equation*}
			\pi_{t+1}
			=\Phi_j(\pi_t,U_t)
			=\frac{\widetilde\pi_{t+1}^{\,j}}
			{\widetilde\pi_{t+1}^{\,j}(\Xcal)}.
			\label{eq:main-theorem-normalization}
		\end{equation*}
		Equivalently, using the observed random label,
		\begin{equation*}
			\pi_{t+1}
			=\Phi_{Y_{t+1}}(\pi_t,U_t)
			\qquad \Pp\text{-almost surely}.
			\label{eq:random-filter-recursion}
		\end{equation*}
	\end{enumerate}
	
	In particular, although the class assignment is deterministic,
	\[
	Y_{t+1}=j\quad\Longleftrightarrow\quad X_{t+1}\in O_j,
	\]
	the process \(\{Y_t\}\) and the selected update branch are stochastic because they are induced by the hidden stochastic state process.
\end{theorem}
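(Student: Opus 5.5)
The plan is to assemble the theorem from the one-step results already proved, treating the four assertions in order and working at the level of measures throughout.

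\textbf{Assertion (i).} This is exactly \cref{prop:one-step-prediction}. It uses that $U_t$ is $\Hh_t$-measurable and that $(\eta_{t+1},\varepsilon_{t+1})$ is independent of $\sigma(X_0,\eta_1,\ldots,\eta_t,\varepsilon_1,\ldots,\varepsilon_t)\supseteq\Hh_t\vee\sigma(X_t)$; this independence follows from \cref{ass:independence}.

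The one point I will spell out is the passage from conditioning on $(X_t,\Hh_t)$ to integrating against $\pi_t$. I will check the identity
\[
\E[\varphi(X_{t+1})\mid\Hh_t]=\int\!\!\int\!\!\int\varphi(G(x,U_t,e,v))\,\nu_\varepsilon(dv)\,\nu_\eta(de)\,\pi_t(dx)
\]
on indicator functions $\varphi=\one_C$, for $C$ in a countable $\pi$-system generating $\B(\Xcal)$, using the freezing lemma with $(X_t,U_t)$ as the $\Hh_t\vee\sigma(X_t)$-measurable argument. A monotone class argument then extends it to all $C\in\B(\Xcal)$. The countable generating family exists because $\Xcal$ is standard Borel, and it lets me choose a single null set that works for all $C$ simultaneously, so that (i) holds as an equality of random measures almost surely.

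\textbf{Assertions (ii) and (iii).} Assertion (ii) is then a definition plus substitution. I will apply (i) to the set $C\cap O_j$, which gives $\widetilde\pi^{\,j}_{t+1}(C)=\pi^-_{t+1}(C\cap O_j)$, and then insert the integral representation of $\mathcal L_{U_t}$ to obtain the displayed triple integral.

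For (iii), I take $C=\Xcal$. Combining this with \cref{eq:class-predictive-probability} and \cref{eq:event-equivalence} gives
\[
\widetilde\pi^{\,j}_{t+1}(\Xcal)=\pi^-_{t+1}(O_j)=\Pp(X_{t+1}\in O_j\mid\Hh_t)=\Pp(Y_{t+1}=j\mid\Hh_t).
\]

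\textbf{Assertion (iv).} I will invoke \cref{thm:observable-class-decomposition} at time $t+1$. This requires identifying $\Hh_{t+1}=\sigma(\Hh_t\cup\sigma(Y_{t+1}))$, which holds because $U_t$ is $\Hh_t$-measurable. On $\{Y_{t+1}=j,\ p_j(\pi_t,U_t)>0\}$, equation \cref{eq:static-bayes-observable-class} gives $\pi_{t+1}(C)=\pi^-_{t+1}(C\cap O_j)/\pi^-_{t+1}(O_j)$. By (ii) and (iii) this equals $\widetilde\pi^{\,j}_{t+1}(C)/\widetilde\pi^{\,j}_{t+1}(\Xcal)=\Phi_j(\pi_t,U_t)(C)$ through \cref{eq:global-filter-map}.

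\textbf{Main obstacle.} The hardest step is justifying the elementary conditional Bayes formula on an atom of the enlarged $\sigma$-algebra. The standard fact I will use is that, for $\mathcal G$ and a finite partition $\{D_k\}$, $\E[Z\mid\mathcal G\vee\sigma(D_k)]=\sum_k\one_{D_k}\E[Z\one_{D_k}\mid\mathcal G]/\Pp(D_k\mid\mathcal G)$ on $\{\Pp(D_k\mid\mathcal G)>0\}$. I will verify it directly by testing against generators $G\cap D_k$ with $G\in\mathcal G$, which is the step I expect to require the most care.

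\textbf{Random-label form.} For the form $\pi_{t+1}=\Phi_{Y_{t+1}}(\pi_t,U_t)$, the events $\{Y_{t+1}=j\}$ partition $\Omega$. The remaining pieces $\{Y_{t+1}=j,\ p_j(\pi_t,U_t)=0\}$ are null by \cref{rem:zero-mass-harmless}, since
\[
\Pp\bigl(Y_{t+1}=j,\,p_j=0\bigr)=\E\bigl[\one_{\{p_j=0\}}\,p_j\bigr]=0.
\]
So the choice of $\rho_j$ from \cref{ass:zero-mass-branches} only affects a null set. Taking a countable union of null sets over $j$ and over the generating family then gives the almost sure statement. The final remark about stochasticity of the branch is immediate from $Y_{t+1}=h(X_{t+1})$.
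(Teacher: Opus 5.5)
Your proposal is correct and follows essentially the same route as the paper's proof and its detailed version in \cref{app:recursive-filter-proof}: prediction by conditioning first on $\sigma(\Hh_t\cup\sigma(X_t))$ and then on $\Hh_t$; substitution of $C\cap O_j$ and of $C=\Xcal$ for (ii)--(iii); the conditional Bayes formula on $\{Y_{t+1}=j\}$ with $\Hh_{t+1}=\sigma(\Hh_t\cup\sigma(Y_{t+1}))$ for (iv); and the tower-property computation $\E[\one_{\{p_j=0\}}p_j]=0$ for the zero-mass branches. Your extra steps (a countable generating $\pi$-system giving one null set for all $C$, and the explicit check of the partition Bayes formula on generators $G\cap D_k$) fill in details the paper leaves implicit without changing the argument.
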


\begin{proof}
	Parts (i)--(iii) follow from \cref{prop:one-step-prediction,eq:class-restricted-measure,eq:class-restricted-G} and the event identity \cref{eq:event-equivalence}. Part (iv) is the conditional Bayes formula of \cref{thm:observable-class-decomposition} applied at time \(t+1\). A detailed measure-theoretic verification, including the conditional-expectation identity and the treatment of zero-mass branches, is given in \cref{app:recursive-filter-proof}.
\end{proof}

\begin{corollary}[Recursive density by observed class]
	\label{cor:recursive-density}
	Under the assumptions of \cref{thm:main-recursive-filter}, suppose additionally that \cref{eq:class-transition-density} holds for every nonempty observable class. On the event \(\{Y_{t+1}=j,\ p_j(\pi_t,U_t)>0\}\), the filtered law satisfies
	\begin{equation*}
		\pi_{t+1}(dx')
		=z_{t+1}^{j}(x')\,\lambda_j(dx'),
		\label{eq:posterior-density-class}
	\end{equation*}
	where
	\begin{align}
		\widetilde z_{t+1}^{j}(x')
		&:=\one_{O_j}(x')
		\int_{\Xcal}q_j(x'\mid x,U_t)\,\pi_t(dx),
		\label{eq:recursive-unnormalized-density}
		\\
		z_{t+1}^{j}(x')
		&:=\frac{\widetilde z_{t+1}^{j}(x')}
		{\displaystyle\int_{O_j}\widetilde z_{t+1}^{j}(y)\,\lambda_j(dy)}.
		\label{eq:recursive-normalized-density}
	\end{align}
	If the prior law has a class-dependent density \(z_t^k\) with respect to \(\lambda_k\) on the currently observed class \(O_k\), then
	\begin{equation*}
		\widetilde z_{t+1}^{j}(x')
		=
		\one_{O_j}(x')
		\int_{O_k}q_j(x'\mid x,U_t)z_t^k(x)\,\lambda_k(dx).
		\label{eq:density-to-density-recursion}
	\end{equation*}
\end{corollary}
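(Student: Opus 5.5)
The corollary follows from \cref{thm:main-recursive-filter}(iv) combined with the density computation of \cref{subsec:recursive-density-class}. We work on the reachable event $\{Y_{t+1}=j,\ p_j(\pi_t,U_t)>0\}$. By part (iv), $\pi_{t+1}=\widetilde\pi_{t+1}^{\,j}/\widetilde\pi_{t+1}^{\,j}(\Xcal)$ with $\widetilde\pi_{t+1}^{\,j}=\widetilde{\mathcal L}_{U_t,j}\pi_t$. It therefore suffices to show that $\widetilde z_{t+1}^{j}$, as defined in \cref{eq:recursive-unnormalized-density}, is a version of the Radon--Nikodym derivative $d\widetilde\pi_{t+1}^{\,j}/d\lambda_j$. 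Normalization then produces \cref{eq:recursive-normalized-density}.

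\emph{Step 1 (unnormalized density).} Fix $C\in\B(\Xcal)$. Using \cref{eq:predictive-operator} and \cref{eq:class-restricted-measure}, write
\[
\widetilde\pi_{t+1}^{\,j}(C)=\int_{\Xcal}Q(C\cap O_j\mid x,U_t)\,\pi_t(dx).
\]
Next, insert \cref{eq:class-transition-density} and exchange the order of integration by Tonelli's theorem. This is legitimate because $q_j\ge 0$ is jointly measurable, $\lambda_j$ is $\sigma$-finite, and $\pi_t$ is a probability measure. The exchange gives
\[
\widetilde\pi_{t+1}^{\,j}(C)=\int_{C}\one_{O_j}(x')\int_{\Xcal}q_j(x'\mid x,U_t)\,\pi_t(dx)\,\lambda_j(dx'),
\]
which is \cref{eq:explicit-unnormalized-density} evaluated at $(\mu,u)=(\pi_t,U_t)$. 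The one point needing care is that $\pi_t$ and $U_t$ are random. The argument is therefore carried out pathwise for the fixed $\omega$-dependent pair $(\pi_t(\omega),U_t(\omega))$, using the regular versions from \cref{ass:basic-model}(A7). Joint measurability of $(x',x,u)\mapsto q_j$ ensures that the inner integral is measurable in $x'$. I expect this measurability bookkeeping to be the only genuine obstacle; the rest is routine.

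\emph{Step 2 (normalization).} Taking $C=\Xcal$ in Step 1 gives $\int_{O_j}\widetilde z^{j}_{t+1}\,d\lambda_j=p_j(\pi_t,U_t)>0$. On the reachable event the denominator in \cref{eq:recursive-normalized-density} is therefore positive and finite. Dividing Step 1 by this quantity and applying part (iv) yields $\pi_{t+1}(dx')=z^j_{t+1}(x')\lambda_j(dx')$.

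\emph{Step 3 (density-to-density form).} Suppose the current observed class is $O_k$, that is, $Y_t=k$ and the branch is reachable. Then \cref{cor:filtered-support} gives $\pi_t(O_k)=1$. By hypothesis, $\pi_t(dx)=z_t^k(x)\lambda_k(dx)$ on $O_k$. The inner integral in \cref{eq:recursive-unnormalized-density} can then be restricted to $O_k$ and rewritten as $\int_{O_k}q_j(x'\mid x,U_t)z_t^k(x)\lambda_k(dx)$; this is the standard change of variables for a measure with a density, valid because the integrand is nonnegative. This gives the final displayed identity. For $t=0$ the same argument applies, with $\pi_0$ the law of $X_0$ given $Y_0$.
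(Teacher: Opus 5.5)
Your proposal is correct and follows the paper's proof: evaluate the classwise density formula \cref{eq:explicit-unnormalized-density} at $(\mu,u)=(\pi_t,U_t)$, normalize on the positive-mass branch via part (iv) of \cref{thm:main-recursive-filter}, and substitute $\pi_t(dx)=z_t^k(x)\,\lambda_k(dx)$ to obtain the last identity. The only difference is that you supply the Tonelli justification of \cref{eq:explicit-unnormalized-density}, which the paper states without proof.
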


\begin{proof}
	Apply \cref{eq:explicit-unnormalized-density,eq:normalized-class-density-recursion} with \(\mu=\pi_t\). The last identity follows by substituting \(\pi_t(dx)=z_t^k(x)\lambda_k(dx)\).
\end{proof}

\subsection{Well-posedness and zero-mass branches}
\label{subsec:well-posedness-filter}

The next proposition collects the basic properties needed later to define the information-state transition kernel.

\begin{proposition}[Well-posedness of the filtering maps]
	\label{prop:filter-well-posedness}
	Equip \(\Pcal(\Xcal)\) with its canonical Borel \(\sigma\)-field. Under \cref{ass:basic-model,ass:zero-mass-branches}, the following statements hold.
	\begin{enumerate}[label=\textnormal{(\alph*)},leftmargin=*]
		\item For every \((\mu,u)\), \(\mathcal L_u\mu\in\Pcal(\Xcal)\).
		\item For every \((\mu,u,j)\), \(\widetilde{\mathcal L}_{u,j}\mu\) is a finite nonnegative measure supported on \(O_j\).
		\item The functions \((\mu,u)\mapsto p_j(\mu,u)\) are Borel measurable and satisfy \cref{eq:class-probabilities-sum}.
		\item The globally extended maps \((\mu,u)\mapsto\Phi_j(\mu,u)\) are Borel measurable and satisfy \(\Phi_j(\mu,u)(O_j)=1\).
		\item The values assigned through \(\rho_j\) on \(\{p_j=0\}\) do not affect the law of any reachable information-state trajectory.
	\end{enumerate}
\end{proposition}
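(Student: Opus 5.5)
The plan is to verify (a)--(e) in order, using the canonical Borel structure on \(\Pcal(\Xcal)\): the smallest \(\sigma\)-field making \(\mu\mapsto\mu(C)\) measurable for every \(C\in\B(\Xcal)\). Since \(\Xcal\) is standard Borel, this coincides with the Borel \(\sigma\)-field of the weak topology. The consequence I rely on is that a map \(F:\Pcal(\Xcal)\times\Ucal\to\Pcal(\Xcal)\) is Borel as soon as \((\mu,u)\mapsto F(\mu,u)(C)\) is Borel for each \(C\).

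Parts (a) and (b) are immediate. Since \(Q\) is a stochastic kernel, \(x\mapsto Q(C\mid x,u)\) is measurable with values in \([0,1]\), so \(\mathcal L_u\mu(C)\) is well defined. Countable additivity follows by monotone convergence, and \(Q(\Xcal\mid x,u)=1\) gives total mass one. For (b), \(\widetilde{\mathcal L}_{u,j}\mu=(\mathcal L_u\mu)|_{O_j}\) is a restriction of a probability measure. Its mass is at most one, and it gives zero mass to \(O_j^c\).

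For (c), the main step is joint measurability of \((\mu,u)\mapsto(\mathcal L_u\mu)(C)\). I would first check that \((x,u)\mapsto Q(C\mid x,u)\) is jointly Borel. This follows from Fubini--Tonelli applied to the Borel function \(\one_C(G(x,u,e,v))\) on \(\Xcal\times\Ucal\times\mathsf E\times\mathsf V\) under \(\nu_\varepsilon\otimes\nu_\eta\), using (A2). Next I would show that \((\mu,u)\mapsto\int f(x,u)\,\mu(dx)\) is Borel for every bounded Borel \(f\) on \(\Xcal\times\Ucal\). For product indicators \(f=\one_{A}(x)\one_{D}(u)\) the value is \(\mu(A)\one_D(u)\), which is measurable by definition of the \(\sigma\)-field. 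A monotone class argument (Dynkin's \(\pi\)-\(\lambda\) theorem on the class of sets \(S\) with \((\mu,u)\mapsto\int\one_S(x,u)\,\mu(dx)\) measurable) extends this to all indicators of \(\B(\Xcal)\otimes\B(\Ucal)\), and simple-function approximation then gives all bounded Borel \(f\). I expect this to be the main obstacle, because it is the only place where genuine measure theory enters. Applying the result with \(C=O_j\) gives measurability of \(p_j\). The identity \eqref{eq:class-probabilities-sum} holds because the \(O_j\) partition \(\Xcal\) and \(\mathcal L_u\mu\) has total mass one.

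For (d), fix \(C\) and write
\[
\Phi_j(\mu,u)(C)=\one_{\{p_j>0\}}(\mu,u)\,\frac{\widetilde{\mathcal L}_{u,j}\mu(C)}{p_j(\mu,u)}+\one_{\{p_j=0\}}(\mu,u)\,\rho_j(C),
\]
with the quotient set to zero on \(\{p_j=0\}\). By (c) and the previous step, the numerator \((\mathcal L_u\mu)(C\cap O_j)\) and \(p_j\) are Borel, and \(\{p_j>0\}\) is a Borel set. Hence each coordinate map is Borel, and by the criterion above \(\Phi_j\) is Borel. The support property \(\Phi_j(\mu,u)(O_j)=1\) is \(p_j/p_j=1\) on \(\{p_j>0\}\) and \(\rho_j(O_j)=1\) otherwise.

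For (e), I would argue by induction on \(t\) that the joint law of \((Y_0,\pi_0,U_0,\ldots,Y_t,\pi_t)\) is unchanged when \(\rho_j\) is replaced by any other \(\rho_j'\) supported on \(O_j\). At each step, by \cref{thm:main-recursive-filter}(iv), \(\pi_{t+1}=\Phi_{Y_{t+1}}(\pi_t,U_t)\). By \cref{rem:zero-mass-harmless}, the event \(\{Y_{t+1}=j,\ p_j(\pi_t,U_t)=0\}\) is null. The two versions of the recursion therefore agree almost surely, so the finite-dimensional laws, and hence the law of the trajectory, coincide.
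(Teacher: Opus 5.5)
Your proposal is correct and follows the paper's proof essentially step for step. The paper uses the kernel properties of \(Q\) for (a)--(b), standard integration results for stochastic kernels for the measurability in (c) (which you make explicit via Tonelli and a \(\pi\)--\(\lambda\) argument), pastes the quotient with the fixed measure \(\rho_j\) for (d), and obtains (e) from \cref{rem:zero-mass-harmless}, whose null-event claim is justified by the same tower-property argument in the appendix that you invoke.
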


\begin{proof}
	Parts (a) and (b) follow from the kernel properties of \(Q\). Measurability in (c) follows from standard integration results for stochastic kernels. On \(\{p_j>0\}\), the quotient defining \(\Phi_j\) is measurable; on \(\{p_j=0\}\), it is replaced by the fixed measure \(\rho_j\), proving (d). Part (e) follows from \cref{rem:zero-mass-harmless}. Further details are included in \cref{app:recursive-filter-proof}.
\end{proof}

\begin{remark}[Reachability versus positivity assumptions]
	\label{rem:reachability-positivity}
	No uniform lower bound on \(p_j(\mu,u)\) is required for the exact filtering recursion. Such a lower bound may become necessary later when continuity constants or uniform approximation-error bounds are derived, because normalization can amplify perturbations when \(p_j(\mu,u)\) approaches zero.
\end{remark}

\subsection{Relation with classical unnormalized filtering}
\label{subsec:relation-classical-filtering}

For each observable class, define the deterministic class likelihood
\begin{equation}
	g_j(x'):=\one_{O_j}(x').
	\label{eq:deterministic-class-likelihood}
\end{equation}
Then the class-restricted measure can be written as
\begin{equation*}
	\widetilde{\mathcal L}_{u,j}\mu(C)
	=
	\int_{\Xcal}\int_{\Xcal}
	\one_C(x')g_j(x')
	Q(dx'\mid x,u)\,\mu(dx).
	\label{eq:unnormalized-likelihood-form}
\end{equation*}
This is the discrete-time prediction--likelihood form of an unnormalized filter. The map
\[
\mu\longmapsto\widetilde{\mathcal L}_{u,j}\mu
\]
is linear, while normalization in \cref{eq:normalized-class-update} is nonlinear. This separation mirrors the structural role of classical unnormalized filtering: a linear propagation-and-weighting step is followed by division by the total mass.

There are, however, important distinctions. The present observation is not a continuously perturbed signal and no additional observation noise is introduced. Instead, the likelihood is the indicator of the random event
\[
Y_{t+1}=j
\quad\Longleftrightarrow\quad
X_{t+1}\in O_j.
\]
Consequently, \(\widetilde{\mathcal L}_{u,j}\mu\) should not be identified with the continuous-time Zakai equation itself. It is more accurately described as a finite-partition, discrete-time unnormalized conditional recursion with the same prediction--weighting--normalization architecture.

The total mass
\[
\widetilde{\mathcal L}_{u,j}\mu(\Xcal)=p_j(\mu,u)
\]
plays two roles simultaneously: it is the normalizing constant of the posterior and the predictive probability of observing class \(j\). This dual role will be used in the next section to construct the transition kernel of the completely observable information-state model.

%
%

\section{Information-state reduction}
\label{sec:information-state-reduction}

The recursive filter of \cref{sec:recursive-filtering} converts the partially observed system into a fully observed controlled process whose state records both the observed class and the conditional law of the hidden state within that class. The conditional measure, rather than a density, is taken as the primary information variable. Class-dependent densities will be used only when an absolutely continuous representation is available.

Throughout this section, the index set of nonempty observable classes is denoted by
\begin{equation}
	\mathsf J
	:=\{j\in\{1,\ldots,r+1\}:O_j\neq\varnothing\}.
	\label{eq:nonempty-class-index-set}
\end{equation}

\subsection{Information state and information space}
\label{subsec:information-state-space}

For each $j\in\mathsf J$, define the class-supported probability space
\begin{equation}
	\Pcal_j
	:=\bigl\{\mu\in\Pcal(\Xcal):\mu(O_j)=1\bigr\}.
	\label{eq:class-supported-probabilities}
\end{equation}
The information space is the disjoint union
\begin{equation}
	\Ical
	:=\bigcup_{j\in\mathsf J}\bigl(\{j\}\times\Pcal_j\bigr).
	\label{eq:information-space}
\end{equation}
We equip $\Pcal(\Xcal)$ with the evaluation $\sigma$-field
\begin{equation}
	\mathscr B_{\Pcal}
	:=\sigma\bigl\{\mu\mapsto\mu(C):C\in\B(\Xcal)\bigr\},
	\label{eq:evaluation-sigma-field}
\end{equation}
and $\Ical$ with the trace $\sigma$-field inherited from
$\mathsf J\times\Pcal(\Xcal)$. Since $\Xcal$ is a standard Borel space, $\Pcal(\Xcal)$ endowed with \cref{eq:evaluation-sigma-field} is also standard Borel. Moreover, each $\Pcal_j$ is measurable because $\mu\mapsto\mu(O_j)$ is measurable. Hence, $(\Ical,\B(\Ical))$ is a standard Borel space.

\begin{definition}[Information state]
	\label{def:information-state}
	The information state at time $t$ is
	\begin{equation}
		I_t:=(Y_t,\pi_t).
		\label{eq:information-state}
	\end{equation}
	Thus, on the event $\{Y_t=j\}$,
	\[
	I_t=(j,\pi_t)\in\{j\}\times\Pcal_j.
	\]
\end{definition}

The first coordinate identifies the observed class, while the second coordinate describes the remaining conditional uncertainty within that class. Although the support condition $\pi_t(O_{Y_t})=1$ makes the class label recoverable from $\pi_t$ whenever the classes are disjoint, retaining $Y_t$ explicitly has three advantages: it preserves the physical meaning of the observation, accommodates class-dependent dominating measures, and keeps the residual class $O_{r+1}$ visible in the state description.

\begin{remark}[Singleton observable classes]
	\label{rem:singleton-observable-classes}
	If an observable class is a singleton, say
	\[
	O_j=\{x_j\},
	\]
	then
	\[
	\mathcal P_j=\{\delta_{x_j}\}.
	\]
	Hence, observing class $j$ reveals the hidden state exactly, and the corresponding information state reduces locally to
	\[
	I_t=(j,\delta_{x_j}).
	\]
	For non-singleton observable classes, the second coordinate of the information state retains the remaining conditional uncertainty about the hidden state within the observed class.
\end{remark}

\begin{proposition}[Reachability of the information space]
	\label{prop:information-state-reachable}
	Under the assumptions of \cref{thm:main-recursive-filter},
	\begin{equation}
		\Pp(I_t\in\Ical)=1,
		\qquad t\in\N_0.
		\label{eq:information-state-in-space}
	\end{equation}
	In particular,
	\begin{equation*}
		\pi_t(O_{Y_t})=1
		\qquad \Pp\text{-almost surely}.
		\label{eq:posterior-support-current-class}
	\end{equation*}
\end{proposition}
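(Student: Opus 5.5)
The plan is to prove the support identity $\pi_t(O_{Y_t})=1$ almost surely first. Membership $I_t\in\Ical$ then follows at once, since on $\{Y_t=j\}$ we have $I_t=(j,\pi_t)$, and $I_t\in\{j\}\times\Pcal_j$ holds exactly when $\pi_t(O_j)=1$. Since $Y_t$ takes only finitely many values and $\{Y_t=j\}=\varnothing$ whenever $O_j=\varnothing$ (by \cref{eq:event-equivalence}), it suffices to show for each $j\in\mathsf J$ that $\Pp(Y_t=j,\ \pi_t(O_j)\neq 1)=0$. Measurability of $\{I_t\in\Ical\}$ is clear: $\pi_t$ is a $\Pcal(\Xcal)$-valued version by (A7), and $\mu\mapsto\mu(O_j)$ is measurable for the evaluation $\sigma$-field \cref{eq:evaluation-sigma-field}.

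The cleanest route avoids the recursion altogether and argues directly from the definition \cref{eq:filtered-distribution}. Because $\{Y_t=j\}\in\Hh_t$,
\[
\E\bigl[\one_{\{Y_t=j\}}\pi_t(O_j)\bigr]
=\E\bigl[\one_{\{Y_t=j\}}\one_{O_j}(X_t)\bigr]
=\Pp(Y_t=j),
\]
where the last equality uses $\one_{\{Y_t=j\}}=\one_{O_j}(X_t)$ from \cref{eq:event-equivalence}. Since $0\le\pi_t(O_j)\le1$, the integrand $\one_{\{Y_t=j\}}(1-\pi_t(O_j))$ is nonnegative and has zero expectation. Hence $\pi_t(O_j)=1$ almost surely on $\{Y_t=j\}$. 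This covers every $t\in\N_0$, including $t=0$ with $\Hh_0=\sigma(Y_0)$, so no separate base case is needed.

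As a consistency check, for $t\ge1$ the same conclusion follows from \cref{cor:filtered-support}, or from \cref{thm:main-recursive-filter}(iv) together with $\Phi_j(\mu,u)(O_j)=1$ from \cref{prop:filter-well-posedness}(d). \cref{rem:zero-mass-harmless} shows that the exceptional set $\{Y_{t}=j,\ p_j(\pi_{t-1},U_{t-1})=0\}$ is null. The only delicate point, and the main obstacle, is the choice of version. Since $\pi_t$ is a version of a regular conditional probability, the identity holds only up to a null set. Taking a finite union over $j\in\mathsf J$ keeps the exceptional set null, which yields \cref{eq:information-state-in-space}.
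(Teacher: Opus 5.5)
Your proof is correct, and it takes a more elementary route than the paper.

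\textbf{The paper's route.} The paper argues through the conditional Bayes formula of \cref{thm:observable-class-decomposition}. On $\{Y_t=j,\ p_{t,j}^{\Oo,-}>0\}$ one has $\pi_t=\pi_t^{O_j,-}$, hence $\pi_t(O_j)=1$ by \cref{cor:filtered-support}, and the paper then sums over the finitely many classes. This tacitly uses two further facts:
\begin{enumerate}
\item the zero-mass branch $\{Y_t=j,\ p_{t,j}^{\Oo,-}=0\}$ is null;
\item the same Bayes representation holds at $t=0$, with the prior in place of $\pi_0^{-}$, even though that theorem is stated only for $t\geq 1$.
\end{enumerate}

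\textbf{Your route.} Your argument needs neither fact. It amounts to the observation that $\pi_t(O_j)=\Pp(Y_t=j\mid\Hh_t)=\one_{\{Y_t=j\}}$ almost surely, because $\{Y_t=j\}=\{X_t\in O_j\}\in\Hh_t$. It therefore does not depend on the dynamics, on \cref{ass:independence}, or on the zero-mass convention, and it covers every $t\in\N_0$ uniformly.

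\textbf{What the paper's route buys.} It ties the support property to the explicit update that generates the information state. Since $\Phi_k(\mu,u)(O_k)=1$ for every argument by \cref{prop:filter-well-posedness}(d), the recursively computed version $\pi_t=\Phi_{Y_t}(\pi_{t-1},U_{t-1})$ lies in $\Ical$ for every $\omega$ when $t\geq 1$, not merely almost surely. This is what keeps the recursion $I_{t+1}=\Psi(I_t,U_t,Y_{t+1})$ inside $\Ical$. Your consistency check already points in this direction.
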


\begin{proof}
	By \cref{eq:event-equivalence}, the event $\{Y_t=j\}$ coincides with $\{X_t\in O_j\}$. The conditional Bayes formula in \cref{thm:observable-class-decomposition} therefore implies that $\pi_t$ is supported on $O_j$ on this event. Summing over the finite class index set yields \cref{eq:information-state-in-space}.
\end{proof}

When state-dependent action constraints $\Ucal(x)$ are present, the controller cannot use the hidden value $x$. The admissible actions associated with an information state $I=(j,\mu)$ are therefore defined by
\begin{equation}
	\Ucal(I)
	:=\left\{u\in\Ucal:
	\mu\bigl(\{x\in\Xcal:u\in\Ucal(x)\}\bigr)=1
	\right\}.
	\label{eq:information-admissible-actions}
\end{equation}
Thus, an action is admissible when it is feasible for the hidden state $\mu$-almost surely. In the state-independent case, $\Ucal(I)=\Ucal$ for every $I\in\Ical$.

\subsection{Filter update operator}
\label{subsec:information-filter-update}

For $I=(j,\mu)\in\Ical$, $u\in\Ucal(I)$, and $k\in\mathsf J$, define the information-state update map
\begin{equation}
	\Psi(I,u,k)
	:=\bigl(k,\Phi_k(\mu,u)\bigr),
	\label{eq:information-update-map}
\end{equation}
where $\Phi_k$ is the globally defined normalized filter in \cref{eq:global-filter-map}. By construction,
\begin{equation*}
	\Phi_k(\mu,u)(O_k)=1,
	\label{eq:update-support-information-space}
\end{equation*}
so that $\Psi(I,u,k)\in\Ical$ for every argument, including the fixed extensions used on zero-mass branches.

The temporal convention is now expressed compactly as follows. Given
\[
I_t=(Y_t,\pi_t)
\]
and an $\Hh_t$-measurable action $U_t$, the state is propagated through $\mathcal L_{U_t}$, the random class $Y_{t+1}$ is generated by the event containing $X_{t+1}$, and the posterior is updated by $\Phi_{Y_{t+1}}$. Consequently,
\begin{equation}
	I_{t+1}
	=\Psi(I_t,U_t,Y_{t+1})
	\qquad \Pp\text{-almost surely}.
	\label{eq:information-state-recursion}
\end{equation}

\begin{proposition}[Measurability of the update map]
	\label{prop:information-update-measurable}
	Under \cref{ass:basic-model,ass:zero-mass-branches}, the mapping
	\begin{equation}
		\Psi:\Ical\times\Ucal\times\mathsf J\longrightarrow\Ical
		\label{eq:update-map-domain}
	\end{equation}
	is Borel measurable on the graph of admissible state--action pairs.
\end{proposition}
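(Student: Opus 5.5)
The plan is to reduce measurability of $\Psi$ to measurability of each component map $(\mu,u)\mapsto\Phi_k(\mu,u)$, which is \cref{prop:filter-well-posedness}(d). We then assemble these components using the finiteness of $\mathsf J$. The domain $\Ical\times\Ucal\times\mathsf J$ carries the product $\sigma$-field. The graph $\mathbb K:=\{(I,u):I\in\Ical,\ u\in\Ucal(I)\}$ carries the trace $\sigma$-field. Since $\mathsf J$ is finite with the discrete $\sigma$-field, it suffices to fix $k\in\mathsf J$ and show that the section $(I,u)\mapsto\Psi(I,u,k)=(k,\Phi_k(\mu,u))$ is measurable on $\mathbb K$. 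Preimages of a measurable set under $\Psi$ are then finite unions of measurable sets of the form $\{(I,u)\in\mathbb K:\Psi(I,u,k)\in\Gamma\}\times\{k\}$.

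For fixed $k$, the first coordinate is constant, so the only issue is the map $(I,u)\mapsto\Phi_k(\mu,u)\in\Pcal(\Xcal)$, with $I=(j,\mu)$. This map factors as the coordinate projection $(j,\mu,u)\mapsto(\mu,u)$ followed by $\Phi_k$. The projection is measurable, because $\Ical$ has the trace $\sigma$-field of $\mathsf J\times\Pcal(\Xcal)$. For $\Phi_k$ we use the evaluation $\sigma$-field \cref{eq:evaluation-sigma-field}: it suffices to check that $(\mu,u)\mapsto\Phi_k(\mu,u)(C)$ is measurable for each $C\in\B(\Xcal)$. This splits into two branches.

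\begin{enumerate}[label=\textnormal{(\roman*)},leftmargin=*]
\item On $\{p_k>0\}$, the value equals $\widetilde{\mathcal L}_{u,k}\mu(C)/p_k(\mu,u)$. The numerator $\int Q(C\cap O_k\mid x,u)\,\mu(dx)$ is jointly measurable in $(\mu,u)$. This follows from the standard fact that $(\mu,u)\mapsto\int f(x,u)\,\mu(dx)$ is measurable for bounded jointly measurable $f$. One checks it on rectangles $f=\one_{A}(x)\one_{D}(u)$, where it equals $\mu(A)\one_D(u)$, and extends by a monotone class argument. Here $f(x,u)=Q(C\cap O_k\mid x,u)$ is measurable because $Q$ is a stochastic kernel. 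The denominator is the case $C=\Xcal$, which is measurable by \cref{prop:filter-well-posedness}(c), and the quotient of measurable functions on a measurable set is measurable.
\item On $\{p_k=0\}$, which is measurable by (c), the value is the constant $\rho_k(C)$ by \cref{ass:zero-mass-branches}.
\end{enumerate}

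Gluing the two branches along this measurable partition gives measurability of $\Phi_k(\cdot,\cdot)(C)$. Since the image lies in $\Pcal_k$, the map lands in $\{k\}\times\Pcal_k\subseteq\Ical$, and measurability into $\Ical$ follows from its trace $\sigma$-field.

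The main obstacle is the measurability setting rather than any computation. First, the joint measurability of $(\mu,u)\mapsto\int f(x,u)\,\mu(dx)$ must be stated carefully; this is the monotone-class step above. Second, the graph $\mathbb K$ of the constraint \cref{eq:information-admissible-actions} must be a measurable subset of $\Ical\times\Ucal$. Otherwise the phrase ``measurable on the graph'' is only meaningful with the trace $\sigma$-field on an arbitrary subset. I would handle this by noting that $(\mu,u)\mapsto\mu(\{x:u\in\Ucal(x)\})$ is measurable whenever the graph $\{(x,u):u\in\Ucal(x)\}$ is Borel, by the same monotone-class lemma applied to its indicator. Then $\mathbb K$ is the measurable set where this function equals $1$. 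If the graph is not assumed Borel, I would simply work with the trace $\sigma$-field, where the restriction of a measurable map on $\Ical\times\Ucal$ remains measurable.
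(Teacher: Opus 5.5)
Your proposal is correct and follows essentially the same route as the paper. Both arguments establish measurability of each $(\mu,u)\mapsto\Phi_k(\mu,u)$ and then paste over the finitely many labels $k\in\mathsf J$, using that the first coordinate of $\Psi$ is the constant $k$; the paper simply cites part (d) of the well-posedness proposition for the first step. Your monotone-class check that $(\mu,u)\mapsto\int Q(C\cap O_k\mid x,u)\,\mu(dx)$ is jointly measurable, and your remark on when the constraint graph is a measurable set, fill in details the paper leaves implicit rather than change the argument.
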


\begin{proof}
	For each fixed $k$, the map $(\mu,u)\mapsto\Phi_k(\mu,u)$ is measurable by \cref{prop:filter-well-posedness}. Since $\mathsf J$ is finite and the first coordinate of $\Psi$ is the class label $k$, measurability follows by a finite pasting argument.
\end{proof}

\begin{remark}[Measure and density representations]
	\label{rem:information-measure-density}
	The state variable in \cref{eq:information-state} is the probability measure $\pi_t$. If, on $\{Y_t=j\}$, one has $\pi_t\ll\lambda_j$, then the same information state may be represented as $(j,z_t^j)$, where
	\[
	z_t^j=\frac{d\pi_t}{d\lambda_j}.
	\]
	This is a coordinate representation of the measure-valued state, not a different state definition. It applies equally to atomic, continuous, and mixed classes through the choice of $\lambda_j$.
\end{remark}

\subsection{Observation probabilities}
\label{subsec:information-observation-probabilities}

For an information state $I=(j,\mu)$ and an admissible action $u\in\Ucal(I)$, define
\begin{equation}
	\vartheta_k(I,u)
	:=p_k(\mu,u)
	=\bigl(\mathcal L_u\mu\bigr)(O_k),
	\qquad k\in\mathsf J.
	\label{eq:information-observation-probability}
\end{equation}
The current class label $j$ does not enter the right-hand side separately because its effect is already encoded by the support of $\mu$. By \cref{eq:class-probabilities-sum},
\begin{equation*}
	\vartheta_k(I,u)\geq0,
	\qquad
	\sum_{k\in\mathsf J}\vartheta_k(I,u)=1.
	\label{eq:observation-probability-simplex}
\end{equation*}
Furthermore, along the controlled process,
\begin{equation}
	\vartheta_k(I_t,U_t)
	=\Pp(Y_{t+1}=k\mid\Hh_t)
	\qquad \Pp\text{-almost surely}.
	\label{eq:observation-probability-history}
\end{equation}
Thus, although the class-label map $h$ is deterministic, the next observation is random because it is generated by the hidden random state $X_{t+1}$.

\begin{proposition}[Observation kernel]
	\label{prop:observation-kernel-information}
	The mapping
	\begin{equation}
		R(\{k\}\mid I,u)
		:=\vartheta_k(I,u),
		\qquad k\in\mathsf J,
		\label{eq:observation-kernel}
	\end{equation}
	defines a stochastic kernel on $\mathsf J$ given the admissible information state--action pairs.
\end{proposition}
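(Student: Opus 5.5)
To prove \cref{prop:observation-kernel-information}, I will check the two defining properties of a stochastic kernel on the finite set $\mathsf J$ given the admissible graph
\[
\mathbb K:=\{(I,u):I\in\Ical,\ u\in\Ucal(I)\}\subseteq\Ical\times\Ucal,
\]
equipped with its trace $\sigma$-field. The two properties are:
\begin{enumerate}[label=\textnormal{(\alph*)},leftmargin=*]
\item For each fixed $(I,u)\in\mathbb K$, the set function $R(\cdot\mid I,u)$ is a probability measure on $\mathsf J$.
\item For each subset $D\subseteq\mathsf J$, the map $(I,u)\mapsto R(D\mid I,u)$ is Borel measurable on $\mathbb K$.
\end{enumerate}
Since $\mathsf J$ is finite and carries its power set as $\sigma$-field, I will extend $R$ from singletons by finite additivity, $R(D\mid I,u):=\sum_{k\in D}\vartheta_k(I,u)$. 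Countable additivity is then automatic.

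For (a), nonnegativity holds because $\vartheta_k(I,u)=(\mathcal L_u\mu)(O_k)$ is the value of the probability measure $\mathcal L_u\mu$, which belongs to $\Pcal(\Xcal)$ by \cref{prop:filter-well-posedness}(a). For the total mass, I will use that $\{O_j:j\in\mathsf J\}$ partitions $\Xcal$, since empty classes contribute nothing. This gives
\[
\sum_{k\in\mathsf J}\vartheta_k(I,u)=(\mathcal L_u\mu)(\Xcal)=1,
\]
which is \cref{eq:class-probabilities-sum}.

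For (b), it suffices to treat singletons, because finite sums of measurable functions are measurable. The function $(I,u)=((j,\mu),u)\mapsto p_k(\mu,u)$ is the composition of the projection $((j,\mu),u)\mapsto(\mu,u)$ with $p_k$.
\begin{itemize}
\item The projection is measurable, since $\Ical$ carries the trace of the product $\sigma$-field on $\mathsf J\times\Pcal(\Xcal)$.
\item The map $p_k$ is Borel by \cref{prop:filter-well-posedness}(c).
\end{itemize}
Restricting to $\mathbb K$ preserves measurability.

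The only delicate point is the measurability of $p_k$ itself, in case one wants it justified directly rather than cited. I would write
\[
p_k(\mu,u)=\int_{\Xcal}Q(O_k\mid x,u)\,\mu(dx).
\]
The integrand $(x,u)\mapsto Q(O_k\mid x,u)$ is jointly measurable because $G$ is Borel and Fubini applies. I would then use the standard fact that $(\mu,f)\mapsto\int f\,d\mu$ is measurable for the evaluation $\sigma$-field \cref{eq:evaluation-sigma-field}. This is shown by a monotone class argument: start from indicators of rectangles $C\times E$, for which the integral equals $\one_E(u)\mu(C)$, and extend to all bounded jointly measurable functions.

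Finally, I note that \cref{eq:observation-probability-history} is not part of the kernel claim. It follows from \cref{thm:main-recursive-filter}(iii), and I would mention it only to interpret $R$.
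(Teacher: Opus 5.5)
Your proposal is correct and takes the same approach as the paper. You obtain nonnegativity and normalization from $\vartheta_k(I,u)=(\mathcal L_u\mu)(O_k)$ summing to one over the partition $\{O_k:k\in\mathsf J\}$, and measurability from the Borel measurability of $(\mu,u)\mapsto p_k(\mu,u)$ composed with the projection $\Ical\times\Ucal\to\Pcal(\Xcal)\times\Ucal$; your monotone-class argument only spells out the ``standard integration results'' the paper cites for \cref{prop:filter-well-posedness}(c).
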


\begin{proof}
	For every $k$, measurability follows from the measurability of $(\mu,u)\mapsto p_k(\mu,u)$. Nonnegativity and normalization follow from \cref{eq:observation-probability-simplex}.
\end{proof}

The zero-mass convention of \cref{ass:zero-mass-branches} is compatible with this observation kernel. If $\vartheta_k(I,u)=0$, then the transition to class $k$ receives zero probability, regardless of the fixed posterior $\rho_k$ used to define $\Phi_k(\mu,u)$ globally.

\subsection{Controlled transition kernel}
\label{subsec:information-transition-kernel}

The controlled transition kernel of the information-state model is defined by
\begin{equation}
	\mathcal K(D\mid I,u)
	:=\sum_{k\in\mathsf J}
	\vartheta_k(I,u)
	\one_D\!\left(\Psi(I,u,k)\right),
	\qquad D\in\B(\Ical).
	\label{eq:information-transition-kernel}
\end{equation}
Equivalently, for every bounded Borel function $v:\Ical\to\R$,
\begin{equation*}
	\int_{\Ical}v(I')\,\mathcal K(dI'\mid I,u)
	=\sum_{k\in\mathsf J}
	\vartheta_k(I,u)
	v\!\left(k,\Phi_k(\mu,u)\right),
	\label{eq:information-kernel-integral}
\end{equation*}
where $I=(j,\mu)$.

\begin{proposition}[Well-defined information-state kernel]
	\label{prop:information-kernel-well-defined}
	Under \cref{ass:basic-model,ass:zero-mass-branches}, $\mathcal K$ is a stochastic kernel on $(\Ical,\B(\Ical))$ given the graph
	\begin{equation}
		\operatorname{Gr}(\Ucal)
		:=\{(I,u)\in\Ical\times\Ucal:u\in\Ucal(I)\}.
		\label{eq:information-action-graph}
	\end{equation}
	Its value is independent of the arbitrary choices $\rho_k$ on zero-probability branches.
\end{proposition}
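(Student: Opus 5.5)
To prove that $\mathcal K$ is a stochastic kernel, I will check the two defining properties directly from \cref{eq:information-transition-kernel}. These are that $D\mapsto\mathcal K(D\mid I,u)$ is a probability measure on $\B(\Ical)$ for each fixed $(I,u)\in\operatorname{Gr}(\Ucal)$, and that $(I,u)\mapsto\mathcal K(D\mid I,u)$ is Borel measurable on $\operatorname{Gr}(\Ucal)$ for each fixed $D\in\B(\Ical)$.

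\emph{Probability measure property.} Fix $(I,u)$ with $I=(j,\mu)$. By \cref{eq:update-support-information-space}, each $\Psi(I,u,k)$ lies in $\Ical$, so $D\mapsto\one_D(\Psi(I,u,k))$ is the Dirac measure $\delta_{\Psi(I,u,k)}$ on $\Ical$. Hence $\mathcal K(\cdot\mid I,u)=\sum_{k\in\mathsf J}\vartheta_k(I,u)\,\delta_{\Psi(I,u,k)}$ is a finite convex combination of probability measures. The weights are nonnegative and sum to one by \cref{eq:observation-probability-simplex}, so countable additivity and total mass one are immediate.

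\emph{Measurability.} Fix $D\in\B(\Ical)$. Each summand in \cref{eq:information-transition-kernel} is a product of two measurable functions on $\operatorname{Gr}(\Ucal)$.
\begin{enumerate}
\item $(I,u)\mapsto\vartheta_k(I,u)=p_k(\mu,u)$ is measurable. This follows from \cref{prop:filter-well-posedness}(c) composed with the measurable projection $(j,\mu)\mapsto\mu$, which is the restriction of a coordinate projection to the trace $\sigma$-field.
\item $(I,u)\mapsto\one_D(\Psi(I,u,k))$ is measurable because it is the composition of $\one_D$ with the measurable map $(I,u)\mapsto\Psi(I,u,k)$ from \cref{prop:information-update-measurable}, restricted to the section $\{k\}$ of the finite set $\mathsf J$.
\end{enumerate}
A finite sum of such products is measurable. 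I also need $\operatorname{Gr}(\Ucal)$ to be a measurable subset of $\Ical\times\Ucal$. By \cref{eq:information-admissible-actions}, it equals $\{(I,u):\mu(\{x:u\in\Ucal(x)\})=1\}$. I expect this step to be the main technical obstacle, since it requires the graph of the original constraint, $\{(x,u):u\in\Ucal(x)\}$, to be Borel in $\Xcal\times\Ucal$. I would assume this, as is standard. Then $(\mu,u)\mapsto\int\one_{\operatorname{Gr}}(x,u)\,\mu(dx)$ is measurable by the usual monotone class argument for integrals of jointly measurable functions against a measure-valued argument. In the state-independent case this step is trivial.

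\emph{Independence from the choices $\rho_k$.} Compare two families $\{\rho_k\}$ and $\{\rho_k'\}$. The maps $\Psi$ and $\Psi'$ can differ only at indices $k$ with $p_k(\mu,u)=0$, by \cref{eq:global-filter-map}. For each such $k$ the corresponding summand is multiplied by $\vartheta_k(I,u)=0$, so $\mathcal K(D\mid I,u)=\mathcal K'(D\mid I,u)$ for every $D$ and every $(I,u)$. This is an exact pointwise identity, not merely an almost-sure one, consistent with \cref{prop:filter-well-posedness}(e).
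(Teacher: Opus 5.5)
Your proof is correct and follows essentially the same argument as the paper. Both establish that $\mathcal K(\cdot\mid I,u)$ is a finite convex combination of Dirac masses at points of $\Ical$, that each summand is measurable because $\vartheta_k$ and $\Psi(\cdot,\cdot,k)$ are, and that every $\rho_k$-dependent term carries weight $\vartheta_k(I,u)=0$.

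Your extra concern about Borel measurability of $\operatorname{Gr}(\Ucal)$, which the paper never raises, is not needed here. Both $\vartheta_k$ and $\Psi(\cdot,\cdot,k)$ are measurable on all of $\Ical\times\Ucal$, so their restrictions are automatically measurable for the trace $\sigma$-field on the graph. The statement therefore holds without any added assumption on $\{(x,u):u\in\Ucal(x)\}$.
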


\begin{proof}
	For fixed $(I,u)$, \cref{eq:information-transition-kernel} is a convex combination of Dirac probability measures and hence is a probability measure on $\Ical$. For fixed $D$, each term is measurable by \cref{prop:information-update-measurable,prop:observation-kernel-information}; the sum is finite. If $\vartheta_k(I,u)=0$, the corresponding term vanishes, so the assigned value of $\Phi_k$ on that branch does not affect $\mathcal K$.
\end{proof}

The kernel $\mathcal K$ should not be confused with the hidden-state kernel $Q$. The latter propagates the unobserved state $X_t$, whereas $\mathcal K$ propagates the fully observed pair consisting of the next class label and the next filtered distribution.

\subsection{Markov property}
\label{subsec:information-markov-property}

Let
\begin{equation}
	\mathscr I_t
	:=\sigma(I_0,U_0,I_1,U_1,\ldots,U_{t-1},I_t)
	\label{eq:information-history-filtration}
\end{equation}
be the information-state history available before selecting $U_t$. Since $I_s=(Y_s,\pi_s)$ and each $\pi_s$ is generated recursively from the observed history, one has $\mathscr I_t\subseteq\Hh_t$. Conversely, the sequence of first coordinates of $I_0,\ldots,I_t$, together with the controls, recovers the original observation--control history. Hence the two histories contain the same decision-relevant information.

\begin{theorem}[Controlled Markov property of the information state]
	\label{thm:information-controlled-markov}
	Under the assumptions of \cref{thm:main-recursive-filter}, for every $t\in\N_0$ and every $D\in\B(\Ical)$,
	\begin{equation*}
		\Pp(I_{t+1}\in D\mid\Hh_t)
		=\mathcal K(D\mid I_t,U_t)
		\qquad \Pp\text{-almost surely}.
		\label{eq:information-markov-H}
	\end{equation*}
	Consequently,
	\begin{equation*}
		\Pp(I_{t+1}\in D\mid\mathscr I_t,U_t)
		=\mathcal K(D\mid I_t,U_t)
		\qquad \Pp\text{-almost surely}.
		\label{eq:information-markov-I}
	\end{equation*}
	Thus, $\{I_t\}$ is a completely observable controlled Markov process with transition kernel $\mathcal K$.
\end{theorem}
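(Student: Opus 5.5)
The plan is to reduce the statement to the recursion \cref{eq:information-state-recursion} and the conditional observation probabilities \cref{eq:observation-probability-history}, and then apply the standard "freezing" lemma for conditional expectations.

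\textbf{Step 1: rewrite the event.} Fix $D\in\B(\Ical)$. By \cref{thm:main-recursive-filter}(iv), and since the zero-mass branches are null by \cref{rem:zero-mass-harmless}, we have $I_{t+1}=\Psi(I_t,U_t,Y_{t+1})$ almost surely. Because $\mathsf J$ is finite, this gives, up to a null set,
\[
\one_D(I_{t+1})=\sum_{k\in\mathsf J}\one_{\{Y_{t+1}=k\}}\,\one_D\bigl(\Psi(I_t,U_t,k)\bigr).
\]

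\textbf{Step 2: condition on $\Hh_t$.} The pair $(I_t,U_t)$ is $\Hh_t$-measurable. Indeed, $Y_t$ and $U_t$ are $\Hh_t$-measurable, and $\pi_t$ is $\Hh_t$-measurable as a version of the regular conditional law under (A7). The map $\Psi$ is measurable by \cref{prop:information-update-measurable}, so each factor $\one_D(\Psi(I_t,U_t,k))$ is $\Hh_t$-measurable and can be pulled out of the conditional expectation. This yields
\[
\Pp(I_{t+1}\in D\mid\Hh_t)=\sum_{k\in\mathsf J}\one_D\bigl(\Psi(I_t,U_t,k)\bigr)\,\Pp(Y_{t+1}=k\mid\Hh_t).
\]
By \cref{eq:observation-probability-history}, $\Pp(Y_{t+1}=k\mid\Hh_t)=\vartheta_k(I_t,U_t)$. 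Comparing with \cref{eq:information-transition-kernel} gives the first identity. Admissibility $U_t\in\Ucal(I_t)$ holds almost surely, so $(I_t,U_t)$ lies in $\operatorname{Gr}(\Ucal)$ and $\mathcal K(\cdot\mid I_t,U_t)$ is defined there.

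\textbf{Step 3: pass to the smaller filtration.} For the second identity, note that $\sigma(\mathscr I_t,U_t)\subseteq\Hh_t$. This uses $\mathscr I_t\subseteq\Hh_t$, which holds because each $\pi_s$, $s\le t$, is an $\Hh_s$-measurable version, together with the $\Hh_t$-measurability of $U_t$. By the tower property,
\[
\Pp(I_{t+1}\in D\mid\mathscr I_t,U_t)=\E\bigl[\mathcal K(D\mid I_t,U_t)\mid\mathscr I_t,U_t\bigr].
\]
The right-hand side equals $\mathcal K(D\mid I_t,U_t)$, because $\mathcal K(D\mid\cdot)$ is measurable by \cref{prop:information-kernel-well-defined} and $(I_t,U_t)$ is $\sigma(\mathscr I_t,U_t)$-measurable. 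This gives the controlled Markov property with kernel $\mathcal K$.

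\textbf{Main obstacle.} The delicate point is Step 2's measurability claim: that $\pi_t$, as a $\Pcal(\Xcal)$-valued random element with the evaluation $\sigma$-field \cref{eq:evaluation-sigma-field}, is $\Hh_t$-measurable, and that the almost-sure recursion holds simultaneously for all $D$. I would handle this by induction on $t$. The recursive version $\pi_{t+1}:=\Phi_{Y_{t+1}}(\pi_t,U_t)$ is a measurable function of $(\pi_t,U_t,Y_{t+1})$, hence $\Hh_{t+1}$-measurable, and it coincides with the regular conditional law by \cref{thm:main-recursive-filter}. Since $\Xcal$ is standard Borel, $\B(\Xcal)$ is countably generated, so the almost-sure equality can be taken on a single null set for all $C$. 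With this version fixed, the remaining steps are routine.
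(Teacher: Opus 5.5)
Your proof is correct and follows the paper's argument. Both write $I_{t+1}=\Psi(I_t,U_t,Y_{t+1})$ over the finitely many labels $k\in\mathsf J$, pull out the $\Hh_t$-measurable factors, identify $\Pp(Y_{t+1}=k\mid\Hh_t)=\vartheta_k(I_t,U_t)$, and then pass to $\sigma(\mathscr I_t\cup\sigma(U_t))\subseteq\Hh_t$ by the tower property. Your inductive choice of an $\Hh_t$-measurable recursive version of $\pi_t$ also spells out a measurability point that the paper leaves implicit.
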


\begin{proof}
	By \cref{eq:information-state-recursion}, conditional on $\Hh_t$ the next information state can take only the values $\Psi(I_t,U_t,k)$, $k\in\mathsf J$. Their conditional probabilities are $\vartheta_k(I_t,U_t)$ by \cref{eq:observation-probability-history}. Therefore,
	\[
	\Pp(I_{t+1}\in D\mid\Hh_t)
	=\sum_{k\in\mathsf J}
	\vartheta_k(I_t,U_t)
	\one_D\!\left(\Psi(I_t,U_t,k)\right),
	\]
	which is \cref{eq:information-transition-kernel}. The second identity follows because the right-hand side is measurable with respect to $\sigma(I_t,U_t)\subseteq \sigma(\mathscr I_t\cup\sigma(U_t))$ and by the tower property.
\end{proof}

\subsection{Equivalence with the partially observable model}
\label{subsec:pomdp-mdp-equivalence}

Let $c:\Xcal\times\Ucal\to\R$ be a Borel measurable one-stage cost such that the conditional expectations below are well defined. Its information-state counterpart is
\begin{equation}
	g(I,u)
	:=\int_{\Xcal}c(x,u)\,\mu(dx),
	\qquad I=(j,\mu)\in\Ical.
	\label{eq:filtered-one-stage-cost}
\end{equation}
For an admissible control $U_t$,
\begin{equation}
	\E[c(X_t,U_t)\mid\Hh_t]
	=g(I_t,U_t)
	\qquad \Pp\text{-almost surely}.
	\label{eq:conditional-cost-equivalence}
\end{equation}

To formulate policy equivalence, denote the original observable history by
\begin{equation}
	H_t^{\mathrm o}
	:=(Y_0,U_0,Y_1,U_1,\ldots,U_{t-1},Y_t)
	\label{eq:observable-history-vector}
\end{equation}
and the information-state history by
\begin{equation}
	H_t^{\mathrm I}
	:=(I_0,U_0,I_1,U_1,\ldots,U_{t-1},I_t).
	\label{eq:information-history-vector}
\end{equation}
The filtering recursion defines $H_t^{\mathrm I}$ as a measurable function of $H_t^{\mathrm o}$. Conversely, the first coordinate of each $I_s$ recovers $Y_s$, so $H_t^{\mathrm o}$ is a measurable projection of $H_t^{\mathrm I}$. Therefore, policies based on either history can be identified without loss of decision information.

\begin{theorem}[Equivalence of the POMDP and the information-state MDP]
	\label{thm:pomdp-information-equivalence}
	Assume \cref{ass:basic-model,ass:zero-mass-branches}, and let the initial law of $X_0$ be fixed. Then the partially observable control model and the completely observable information-state model with state space $\Ical$, admissible action sets \cref{eq:information-admissible-actions}, transition kernel $\mathcal K$, and one-stage cost $g$ satisfy the following properties.
	
	\begin{enumerate}[label=\textnormal{(\roman*)},leftmargin=*]
		\item Every admissible observation-history policy induces an admissible information-history policy producing the same joint law of
		\[
		(Y_0,\pi_0,U_0,Y_1,\pi_1,U_1,\ldots).
		\]
		Conversely, every admissible information-history policy can be implemented as an observation-history policy through the recursive filter.
		
		\item Under corresponding policies, the information state evolves according to $\mathcal K$ and, for every finite horizon $N$,
		\begin{equation*}
			\E\!\left[\sum_{t=0}^{N-1}c(X_t,U_t)\right]
			=
			\E\!\left[\sum_{t=0}^{N-1}g(I_t,U_t)\right],
			\label{eq:finite-horizon-cost-equivalence}
		\end{equation*}
		whenever either side is well defined.
		
		\item For every discount factor $\alpha\in(0,1)$,
		\begin{equation*}
			\E\!\left[\sum_{t=0}^{\infty}\alpha^t c(X_t,U_t)\right]
			=
			\E\!\left[\sum_{t=0}^{\infty}\alpha^t g(I_t,U_t)\right],
			\label{eq:discounted-cost-equivalence}
		\end{equation*}
		provided the positive and negative parts are integrable; in particular, the identity holds when $c$ is bounded or nonnegative.
		
		\item The two models have the same optimal value under the corresponding finite-horizon or discounted criteria. Hence, solving the completely observable control problem on $\Ical$ is equivalent to solving the original partially observable problem.
	\end{enumerate}
\end{theorem}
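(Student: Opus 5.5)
The plan is to prove (i) first, by constructing explicit measurable maps between histories, and then to derive (ii)–(iv) from the tower property together with \cref{thm:information-controlled-markov} and \cref{eq:conditional-cost-equivalence}.

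\textbf{Step 1: the history maps.} By \cref{thm:main-recursive-filter} and \cref{eq:information-state-recursion}, $\pi_{t+1}=\Phi_{Y_{t+1}}(\pi_t,U_t)$ almost surely. The initial filter $\pi_0$ is the normalized restriction of the fixed initial law to $O_{Y_0}$; on the null branch where this restriction has zero mass we use the convention of \cref{ass:zero-mass-branches}. By induction, this gives a Borel map $F_t$ with $H_t^{\mathrm I}=F_t(H_t^{\mathrm o})$ almost surely. Measurability at each step follows from \cref{prop:filter-well-posedness}(d) and \cref{prop:information-update-measurable}. In the other direction, the coordinate projection $\mathrm{pr}_t$ sends $H_t^{\mathrm I}$ to $H_t^{\mathrm o}$.

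\textbf{Step 2: transferring policies.} Take an observation-history policy, given by stochastic kernels $\gamma_t(du\mid H_t^{\mathrm o})$. Define the information-history policy $\tilde\gamma_t(du\mid h^{\mathrm I}):=\gamma_t(du\mid \mathrm{pr}_t(h^{\mathrm I}))$. Conversely, given $\tilde\gamma_t$, set $\gamma_t(du\mid h^{\mathrm o}):=\tilde\gamma_t(du\mid F_t(h^{\mathrm o}))$; this is exactly implementation through the recursive filter.

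\textbf{Step 3: equality of joint laws.} We show by induction on $t$ that the joint laws coincide.
\begin{itemize}
\item In the original model, the law of $(Y_0,\pi_0)$ is determined by the initial law.
\item Given $\Hh_t$, the action $U_t$ has conditional law $\gamma_t$ evaluated at the history.
\item By \cref{thm:information-controlled-markov}, $I_{t+1}$ then has conditional law $\mathcal K(\cdot\mid I_t,U_t)$.
\end{itemize}
These are precisely the Ionescu–Tulcea kernels that define the information-state MDP under $\tilde\gamma$. The two laws therefore agree on every finite-dimensional cylinder, and hence agree overall.

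\textbf{Admissibility.} We must check that $U_t\in\Ucal(I_t)$ when $U_t(\omega)\in\Ucal(X_t(\omega))$ almost surely. Conditioning on $\Hh_t$ gives $\pi_t(\{x:U_t\in\Ucal(x)\})=1$ almost surely, which requires the measurability of the graph $\{(x,u):u\in\Ucal(x)\}$. I would state this measurability as an implicit standing requirement. The converse direction, that admissibility in $\Ucal(I_t)$ implies hidden-state feasibility almost surely, follows from the same identity. I expect the main obstacle to be exactly here, together with the fact that $F_t$ is defined only almost surely. The latter is handled by \cref{prop:filter-well-posedness}(e): off-path values never carry positive probability.

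\textbf{Costs, parts (ii)–(iii).} For each $t$, the tower property and \cref{eq:conditional-cost-equivalence} give
\[
\E[c(X_t,U_t)]=\E\bigl[\E[c(X_t,U_t)\mid\Hh_t]\bigr]=\E[g(I_t,U_t)],
\]
whenever either side is defined. Summing over $t<N$ yields (ii). For (iii), in the bounded or nonnegative case we split $c=c^+-c^-$ and apply monotone convergence to $\sum_t\alpha^t c^\pm$, using the identity $\E[c^\pm(X_t,U_t)\mid\Hh_t]=g^\pm(I_t,U_t)$, where $g^\pm$ denotes the integral of $c^\pm$. When the positive and negative parts are integrable, Fubini then permits subtraction.

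\textbf{Optimal values, part (iv).} By Step 2, the policy correspondence is surjective in both directions. By Step 3 together with (ii)–(iii), corresponding policies have equal costs. Taking infima over policies gives equal optimal values. Moreover, an optimal policy in one model transfers to an optimal policy in the other.
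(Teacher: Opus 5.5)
Your proposal is correct and follows the same route as the paper's proof and \cref{app:information-equivalence-proof}. Your filter map $F_t$ and coordinate projection $\mathrm{pr}_t$ are the paper's $\Gamma_t$ and $\Lambda_t$, and policies are transferred by composition. Then \cref{thm:information-controlled-markov} identifies the law of $\{I_t\}$, and the tower property with \cref{eq:conditional-cost-equivalence}, plus monotone (or dominated) convergence, yields (ii)--(iv). Your Ionescu--Tulcea identification and the admissibility check for $\Ucal(I_t)$ fill in details the paper leaves implicit.

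The one point to tighten concerns randomized policies. \cref{thm:information-controlled-markov} is proved for $\Hh_t$-measurable controls as in (A6). If your kernels $\gamma_t$ are genuinely randomized, you need to add that the randomization is conditionally independent of $(X_t,\eta_{t+1},\varepsilon_{t+1})$ given $\Hh_t$, or else restrict to Dirac kernels.
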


\begin{proof}
	The measurable correspondence between histories follows from the recursive construction of $\pi_t$ and the explicit class coordinate in $I_t$. Under corresponding policies, \cref{thm:information-controlled-markov} yields the law of the information-state process. Identity \cref{eq:conditional-cost-equivalence}, followed by the tower property and finite summation, proves \cref{eq:finite-horizon-cost-equivalence}. The discounted identity follows by dominated convergence when $c$ is bounded and by monotone convergence when $c$ is nonnegative; the general integrable case follows by applying the argument to positive and negative parts. Taking infima over corresponding policy classes gives equality of the optimal values. Additional measure-theoretic details are collected in \cref{app:information-equivalence-proof}.
\end{proof}

\begin{remark}[What the reduction does and does not assume]
	\label{rem:reduction-no-density}
	The reduction uses only the measure-valued recursion. It does not require a global density, a common dominating measure, or a positive lower bound on all observation probabilities. Such assumptions may be imposed later to obtain continuity of $\mathcal K$, existence of measurable minimizing selectors, or uniform approximation-error estimates, but they are not needed for the exact POMDP--MDP equivalence.
\end{remark}

\begin{remark}[Role of the stochastic observation]
	\label{rem:stochastic-observation-reduction}
	The observation kernel in \cref{eq:observation-kernel} is not generated by an additional observation-noise variable. Its randomness is inherited from the predictive hidden-state law:
	\[
	Y_{t+1}=k
	\quad\Longleftrightarrow\quad
	X_{t+1}\in O_k.
	\]
	Thus, the class assignment is deterministic conditional on $X_{t+1}$, while $Y_{t+1}$ remains stochastic from the controller's perspective.
\end{remark}

%
%

\section{Discounted optimal control}
\label{sec:discounted-control}

This section studies the infinite-horizon discounted control problem for the completely observable information-state model constructed in \cref{sec:information-state-reduction}. From this point onward, the admissible action set is assumed to be independent of the information state:
\begin{equation}
	\Ucal(I)=\Ucal,
	\qquad I\in\Ical.
	\label{eq:state-independent-actions}
\end{equation}
The more general state-dependent formulation introduced in \cref{eq:information-admissible-actions} remains valid for the exact reduction, but \cref{eq:state-independent-actions} keeps the dynamic-programming arguments transparent and is sufficient for the reference model considered later.

\subsection{Admissible policies}
\label{subsec:discounted-admissible-policies}

Let
\[
\mathscr I_t
=\sigma(I_0,U_0,\ldots,U_{t-1},I_t)
\]
be the information-state history before the action at time \(t\) is selected. An admissible randomized history-dependent policy is a sequence
\[
\boldsymbol\gamma
=\{\gamma_t\}_{t\geq0},
\]
where each
\[
\gamma_t(\,\cdot\mid H_t^{\mathrm I})
\]
is a stochastic kernel on \(\Ucal\) given the information history \(H_t^{\mathrm I}\). Equivalently, under \(\boldsymbol\gamma\),
\begin{equation*}
	\Pp(U_t\in D\mid\mathscr I_t)
	=\gamma_t(D\mid H_t^{\mathrm I}),
	\qquad D\in\B(\Ucal).
	\label{eq:history-policy}
\end{equation*}
The class of all admissible policies is denoted by \(\Gamma\).

A policy is \emph{Markov} if there exist stochastic kernels
\[
\varphi_t(\,\cdot\mid I),
\qquad t\geq0,
\]
such that
\[
\gamma_t(\,\cdot\mid H_t^{\mathrm I})
=\varphi_t(\,\cdot\mid I_t).
\]
It is \emph{stationary} if \(\varphi_t=\varphi\) for all \(t\), and it is \emph{stationary deterministic} if there exists a Borel measurable selector
\[
f:\Ical\longrightarrow\Ucal
\]
such that
\[
\varphi(du\mid I)=\delta_{f(I)}(du).
\]
The stationary deterministic policy generated by \(f\) will also be denoted by \(f\).

\subsection{Filtered one-stage cost}
\label{subsec:filtered-stage-cost}

Let
\[
c:\Xcal\times\Ucal\longrightarrow\R
\]
be the one-stage cost of the original partially observable model. We impose the following standing assumption.

\begin{assumption}[Bounded one-stage cost]
	\label{ass:bounded-cost}
	The function \(c\) is Borel measurable and there exists \(C_c<\infty\) such that
	\begin{equation}
		|c(x,u)|\leq C_c,
		\qquad (x,u)\in\Xcal\times\Ucal.
		\label{eq:bounded-hidden-cost}
	\end{equation}
\end{assumption}

For \(I=(j,\mu)\in\Ical\), define the filtered one-stage cost
\begin{equation}
	g(I,u)
	:=\int_{\Xcal}c(x,u)\,\mu(dx).
	\label{eq:filtered-one-stage-cost-section6}
\end{equation}
By \cref{ass:bounded-cost},
\begin{equation}
	|g(I,u)|\leq C_c,
	\qquad (I,u)\in\Ical\times\Ucal.
	\label{eq:bounded-filtered-cost}
\end{equation}
Moreover, by \cref{eq:conditional-cost-equivalence},
\begin{equation}
	\E[c(X_t,U_t)\mid\Hh_t]
	=g(I_t,U_t)
	\qquad \Pp\text{-almost surely}.
	\label{eq:filtered-cost-conditional-section6}
\end{equation}

\begin{remark}[Unbounded costs]
	\label{rem:unbounded-costs}
	The boundedness assumption is used to work on the Banach space of bounded measurable functions with the supremum norm. Unbounded costs can be treated in weighted spaces under suitable drift, growth, and integrability conditions, but that extension is outside the scope of the present article.
\end{remark}

\subsection{Discounted criterion}
\label{subsec:discounted-criterion}

Fix a discount factor
\begin{equation*}
	\alpha\in(0,1).
	\label{eq:discount-factor}
\end{equation*}
For an initial information state \(I\in\Ical\) and a policy \(\boldsymbol\gamma\in\Gamma\), define
\begin{equation}
	J_{\boldsymbol\gamma}(I)
	:=
	\E_I^{\boldsymbol\gamma}
	\left[
	\sum_{t=0}^{\infty}
	\alpha^t g(I_t,U_t)
	\right].
	\label{eq:discounted-policy-cost}
\end{equation}
The series is absolutely integrable because
\begin{equation*}
	|J_{\boldsymbol\gamma}(I)|
	\leq \frac{C_c}{1-\alpha}.
	\label{eq:discounted-cost-bound}
\end{equation*}
The optimal discounted value function is
\begin{equation}
	V^*(I)
	:=
	\inf_{\boldsymbol\gamma\in\Gamma}
	J_{\boldsymbol\gamma}(I),
	\qquad I\in\Ical.
	\label{eq:optimal-discounted-value}
\end{equation}
By the equivalence theorem in \cref{thm:pomdp-information-equivalence}, \(V^*\) is also the optimal value of the original partially observable problem under the corresponding discounted criterion.

Let \(B_b(\Ical)\) denote the Banach space of bounded Borel measurable functions \(v:\Ical\to\R\), endowed with
\[
\|v\|_\infty
:=\sup_{I\in\Ical}|v(I)|.
\]

\subsection{Bellman operator}
\label{subsec:bellman-operator}

For \(v\in B_b(\Ical)\), define the Bellman operator
\begin{equation}
	(\mathcal Bv)(I)
	:=
	\inf_{u\in\Ucal}
	\left\{
	g(I,u)
	+
	\alpha
	\int_{\Ical}
	v(I')\,\mathcal K(dI'\mid I,u)
	\right\}.
	\label{eq:bellman-operator}
\end{equation}
Using the finite set of observable classes and \cref{eq:information-kernel-integral}, this can be written as
\begin{equation*}
	\begin{aligned}
		(\mathcal Bv)(I)
		=\inf_{u\in\Ucal}
		\Bigg\{
		&g(I,u)
		\\
		&+\alpha
		\sum_{k\in\mathsf J}
		\vartheta_k(I,u)
		v\!\left(k,\Phi_k(\mu,u)\right)
		\Bigg\},
	\end{aligned}
	\label{eq:bellman-operator-explicit}
\end{equation*}
where \(I=(j,\mu)\).

The operator in \cref{eq:bellman-operator} should not be confused with the predictive operator \(\mathcal L_u\). The latter acts on probability measures on the hidden state space, whereas \(\mathcal B\) acts on value functions defined on the information space.

Under measurability alone, the pointwise infimum in \cref{eq:bellman-operator} need not be Borel measurable. We therefore distinguish the minimal contraction argument from the regularity assumptions used to obtain measurable minimizers.

\begin{assumption}[Measurable dynamic-programming model]
	\label{ass:measurable-dp}
	The function \(g\) is Borel measurable, \(\mathcal K\) is a Borel stochastic kernel on \(\Ical\) given \(\Ical\times\Ucal\), and for every \(v\in B_b(\Ical)\), the function
	\[
	I\longmapsto
	\inf_{u\in\Ucal}
	\left\{
	g(I,u)+\alpha\int_{\Ical}v(I')\,
	\mathcal K(dI'\mid I,u)
	\right\}
	\]
	admits a bounded Borel measurable version.
\end{assumption}

\subsection{Contraction and value equation}
\label{subsec:contraction-value-equation}

\begin{proposition}[Contraction of the Bellman operator]
	\label{prop:bellman-contraction}
	Under \cref{ass:bounded-cost,ass:measurable-dp}, the operator
	\[
	\mathcal B:B_b(\Ical)\longrightarrow B_b(\Ical)
	\]
	is an \(\alpha\)-contraction:
	\begin{equation}
		\|\mathcal Bv-\mathcal Bw\|_\infty
		\leq
		\alpha\|v-w\|_\infty,
		\qquad v,w\in B_b(\Ical).
		\label{eq:bellman-contraction}
	\end{equation}
\end{proposition}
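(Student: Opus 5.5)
The argument has two parts. First we check that \(\mathcal B\) maps \(B_b(\Ical)\) into itself. Then we prove the contraction estimate \cref{eq:bellman-contraction} pointwise, using monotonicity of the infimum together with the fact that \(\mathcal K(\cdot\mid I,u)\) is a probability measure.

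\textbf{Well-definedness.} For \(v\in B_b(\Ical)\) and \((I,u)\in\Ical\times\Ucal\), set
\[
F_v(I,u):=g(I,u)+\alpha\int_{\Ical}v(I')\,\mathcal K(dI'\mid I,u).
\]
By \cref{eq:bounded-filtered-cost}, and because \(\mathcal K(\cdot\mid I,u)\) is a probability measure (\cref{prop:information-kernel-well-defined}), we have \(|F_v(I,u)|\le C_c+\alpha\|v\|_\infty\) for all \((I,u)\). The infimum over \(u\) is therefore finite and bounded by the same constant. \Cref{ass:measurable-dp} supplies a bounded Borel measurable version, so \(\mathcal Bv\in B_b(\Ical)\).

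\textbf{Contraction.} Fix \(v,w\in B_b(\Ical)\) and \(I\in\Ical\). For every \(u\), linearity of the integral gives
\[
|F_v(I,u)-F_w(I,u)|=\alpha\left|\int_{\Ical}(v-w)(I')\,\mathcal K(dI'\mid I,u)\right|\le\alpha\|v-w\|_\infty .
\]
In the explicit form this is simply \(\alpha\sum_k\vartheta_k(I,u)\bigl|v-w\bigr|(k,\Phi_k(\mu,u))\), with \(\sum_k\vartheta_k=1\). It follows that \(F_v(I,u)\le F_w(I,u)+\alpha\|v-w\|_\infty\) for every \(u\). Taking the infimum over \(u\) on both sides gives \(\mathcal Bv(I)\le\mathcal Bw(I)+\alpha\|v-w\|_\infty\), and exchanging the roles of \(v\) and \(w\) gives \(|\mathcal Bv(I)-\mathcal Bw(I)|\le\alpha\|v-w\|_\infty\). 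Taking the supremum over \(I\) yields \cref{eq:bellman-contraction}.

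\textbf{Main subtlety.} The only delicate point is the use of measurable versions. If \(\mathcal Bv\) is replaced by a Borel version that differs from the pointwise infimum, the supremum norm could in principle be affected. I would therefore read \cref{ass:measurable-dp} as asserting that the pointwise infimum itself is Borel, so the version coincides with it everywhere. Under that reading the pointwise estimate transfers directly to the supremum norm. The remaining steps are elementary properties of infima.
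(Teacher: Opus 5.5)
Your proof is correct and follows essentially the same route as the paper. Both bound the difference of the integrals against the probability measure $\mathcal K(\cdot\mid I,u)$ by $\|v-w\|_\infty$ and then pass to the infimum over $u$; the paper cites $|\inf_u a_u-\inf_u b_u|\le\sup_u|a_u-b_u|$ directly, which you derive from the one-sided bounds, and your explicit boundedness check and reading of \cref{ass:measurable-dp} (the pointwise infimum itself is Borel) make precise what the paper leaves implicit.
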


\begin{proof}
	For every \(I\in\Ical\) and \(u\in\Ucal\),
	\[
	\left|
	\int v\,d\mathcal K(\cdot\mid I,u)
	-
	\int w\,d\mathcal K(\cdot\mid I,u)
	\right|
	\leq\|v-w\|_\infty.
	\]
	Applying the elementary inequality
	\[
	\left|\inf_u a_u-\inf_u b_u\right|
	\leq\sup_u|a_u-b_u|
	\]
	gives
	\[
	|(\mathcal Bv)(I)-(\mathcal Bw)(I)|
	\leq\alpha\|v-w\|_\infty.
	\]
	Taking the supremum over \(I\) proves \cref{eq:bellman-contraction}.
\end{proof}

\begin{theorem}[Discounted value equation]
	\label{thm:discounted-value-equation}
	Under \cref{ass:bounded-cost,ass:measurable-dp}, the Bellman operator has a unique fixed point \(V\in B_b(\Ical)\). Moreover,
	\begin{equation*}
		V=V^*,
		\qquad
		V^*=\mathcal BV^*,
		\label{eq:discounted-bellman-equation}
	\end{equation*}
	and, for every initial \(v_0\in B_b(\Ical)\),
	\begin{equation}
		\|\mathcal B^nv_0-V^*\|_\infty
		\leq
		\alpha^n\|v_0-V^*\|_\infty.
		\label{eq:value-iteration-convergence}
	\end{equation}
	In particular,
	\begin{equation*}
		\|V^*\|_\infty
		\leq\frac{C_c}{1-\alpha}.
		\label{eq:optimal-value-bound}
	\end{equation*}
\end{theorem}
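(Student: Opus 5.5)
The plan has three stages: obtain the fixed point from contraction, identify it with $V^*$ by a two-sided comparison, and read off the remaining estimates.

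\textbf{Fixed point.} Under \cref{ass:measurable-dp}, $\mathcal B$ maps $B_b(\Ical)$ into itself. Boundedness of $\mathcal Bv$ follows from \cref{eq:bounded-filtered-cost}: $\|\mathcal Bv\|_\infty\le C_c+\alpha\|v\|_\infty$. \Cref{prop:bellman-contraction} makes $\mathcal B$ an $\alpha$-contraction, and $B_b(\Ical)$ is complete. Banach's fixed point theorem then gives a unique $V\in B_b(\Ical)$ with $V=\mathcal BV$. Iterating \cref{eq:bellman-contraction} with $w=V$ gives $\|\mathcal B^nv_0-V\|_\infty\le\alpha^n\|v_0-V\|_\infty$. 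The fixed-point identity yields $\|V\|_\infty\le C_c+\alpha\|V\|_\infty$, hence $\|V\|_\infty\le C_c/(1-\alpha)$. Once $V=V^*$ is known, both of these become the stated conclusions.

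\textbf{Lower bound $V\le J_{\boldsymbol\gamma}$.} Fix $\boldsymbol\gamma\in\Gamma$ and $I\in\Ical$. By the Bellman inequality $V(I_t)\le g(I_t,u)+\alpha\int V\,d\mathcal K(\cdot\mid I_t,u)$ for every $u$, evaluated at $u=U_t$. The controlled Markov property, \cref{thm:information-controlled-markov} (equivalently, the construction of $\Pp_I^{\boldsymbol\gamma}$ via Ionescu-Tulcea), gives $\E_I^{\boldsymbol\gamma}[V(I_{t+1})\mid\mathscr I_t,U_t]=\int V\,d\mathcal K(\cdot\mid I_t,U_t)$. Hence $\E[\alpha^tV(I_t)]\le\E[\alpha^tg(I_t,U_t)]+\E[\alpha^{t+1}V(I_{t+1})]$. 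Summing from $t=0$ to $n-1$ telescopes to $V(I)\le\E\sum_{t<n}\alpha^tg(I_t,U_t)+\alpha^n\|V\|_\infty$. Letting $n\to\infty$ by dominated convergence gives $V\le J_{\boldsymbol\gamma}$, and taking the infimum over $\boldsymbol\gamma$ gives $V\le V^*$.

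\textbf{Upper bound $V^*\le V$.} This is the main obstacle. \Cref{ass:measurable-dp} only asserts a measurable version of the infimum, not a measurable minimizer. I would fix $\epsilon>0$ and seek a measurable $\epsilon$-selector $f_\epsilon$ with $g(I,f_\epsilon(I))+\alpha\int V\,d\mathcal K(\cdot\mid I,f_\epsilon(I))\le V(I)+\epsilon$. Since $\Ical$ and $\Ucal$ are standard Borel (\cref{subsec:information-state-space}), the first attempt is a measurable-selection theorem (Jankov--von Neumann, universally measurable selectors). This requires either interpreting \cref{ass:measurable-dp} as including that step or working with universally measurable policies, whose expected costs are still well defined. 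Given such an $f_\epsilon$, the same telescoping argument with the reversed inequality yields $J_{f_\epsilon}\le V+\epsilon/(1-\alpha)$. Hence $V^*\le V$, so $V=V^*$ and $V^*=\mathcal BV^*$. If selectors are unavailable in this generality, the fallback is to read the measurability part of \cref{ass:measurable-dp} as supplying them, since the paper explicitly defers regularity conditions for exact minimizers to later results.
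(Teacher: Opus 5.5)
Your proof is correct. It reaches $V=V^*$ by a different route from the paper's.

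\textbf{How the two routes differ.} The paper only sketches the identification. It invokes the standard finite-horizon truncation argument: $\mathcal B^n0$ is the $n$-stage optimal cost (finite-horizon dynamic programming together with \cref{thm:information-controlled-markov}), discounted and $n$-stage costs differ by at most $\alpha^nC_c/(1-\alpha)$ uniformly in policies and initial states, and $\mathcal B^n0\to V$ by contraction. You instead verify directly against the fixed point $V$. Telescoping the Bellman inequality gives $V\le J_{\boldsymbol\gamma}$ for every randomized history-dependent policy. A single $\epsilon$-selector for $V$ then gives a stationary policy with $J_{f_\epsilon}\le V+\epsilon/(1-\alpha)$.

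\textbf{The selection step.} The obstacle you flag is genuine, and the paper's route shares it. Its identity $\inf_{\boldsymbol\gamma}J_{\boldsymbol\gamma,n}=\mathcal B^n0$ also needs $\epsilon$-optimal selectors, at every stage. \cref{ass:measurable-dp} does not supply them, and the paper's sketch does not mention this. Your version needs only one selection and yields stationary deterministic $\epsilon$-optimal policies as a by-product. The truncation route additionally identifies the value-iteration iterates as finite-horizon optimal costs.

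\textbf{You do not need the fallback.} The map $F(I,u)=g(I,u)+\alpha\int V\,d\mathcal K(\cdot\mid I,u)$ is Borel on the standard Borel space $\Ical\times\Ucal$. Since $V=\mathcal BV$ is Borel, the set $\{(I,u):F(I,u)<V(I)+\epsilon\}$ is Borel and projects onto all of $\Ical$. Jankov--von Neumann uniformization then gives a universally measurable $f_\epsilon$.

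\textbf{Remaining loose end.} $\Gamma$ consists of Borel kernels, so a universally measurable $f_\epsilon$ is not yet admissible. Fix the initial state $I$. At each time $t$, replace $f_\epsilon$ by a Borel map equal to it almost everywhere under the law of $I_t$, with these laws computed inductively. This leaves the joint law of $(I_t,U_t)_{t\ge0}$, and hence the cost, unchanged. You obtain an admissible Borel Markov policy, and therefore $V^*(I)\le V(I)+\epsilon/(1-\alpha)$.
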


\begin{proof}
	The fixed-point existence, uniqueness, and convergence in \cref{eq:value-iteration-convergence} follow from \cref{prop:bellman-contraction} and the Banach fixed-point theorem. The identification of the fixed point with the optimal discounted value follows from the standard finite-horizon truncation argument and the Markov property in \cref{thm:information-controlled-markov}. The truncation error is bounded by
	\[
	\frac{\alpha^nC_c}{1-\alpha},
	\]
	uniformly over policies and initial states.
\end{proof}

\begin{proposition}[Markov \texorpdfstring{\(\varepsilon\)}{epsilon}-optimal policies]
	\label{prop:epsilon-optimal-markov-policies}
	Under \cref{ass:bounded-cost,ass:measurable-dp}, suppose additionally that, for every \(\varepsilon>0\) and every \(v\in B_b(\Ical)\), there exists a Borel measurable mapping
	\[
	f_{\varepsilon,v}:\Ical\longrightarrow\Ucal
	\]
	such that
	\begin{equation*}
		\begin{aligned}
			&g(I,f_{\varepsilon,v}(I))
			+
			\alpha\int_{\Ical}v(I')\,
			\mathcal K(dI'\mid I,f_{\varepsilon,v}(I))
			\\
			&\qquad\leq
			(\mathcal Bv)(I)+\varepsilon,
			\qquad I\in\Ical.
		\end{aligned}
		\label{eq:epsilon-selector}
	\end{equation*}
	Then, for every \(\varepsilon>0\), there exists a deterministic Markov policy whose discounted cost is at most \(V^*+\varepsilon\).
\end{proposition}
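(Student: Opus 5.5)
We plan to construct the policy by approximate value iteration with a finite horizon followed by a stationary tail. Fix \(\varepsilon>0\), and choose \(N\in\N\) with \(2\alpha^N C_c/(1-\alpha)\leq\varepsilon/2\). Set \(v_0:=0\) and \(v_{n+1}:=\mathcal Bv_n\). By \cref{thm:discounted-value-equation}, every \(v_n\) lies in \(B_b(\Ical)\), and
\[
\|v_n-V^*\|_\infty\leq\alpha^n\|V^*\|_\infty\leq\alpha^nC_c/(1-\alpha).
\]
Let \(\delta:=\varepsilon(1-\alpha)/4\). For each \(n\), apply the hypothesis with \(v=v_n\) and tolerance \(\delta\) to obtain a Borel selector \(f_n:=f_{\delta,v_n}\) satisfying
\[
g(I,f_n(I))+\alpha\int v_n\,d\mathcal K(\cdot\mid I,f_n(I))\leq v_{n+1}(I)+\delta .
\]
The candidate policy is deterministic and Markov: at time \(t<N\) it uses \(f_{N-1-t}\), and for \(t\geq N\) it uses a single stationary selector \(f:=f_{\delta,V^*}\).

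We first treat the tail. For a Borel selector \(h\), define the policy operator
\[
T_hw(I):=g(I,h(I))+\alpha\int w\,d\mathcal K(\cdot\mid I,h(I)).
\]
It is an \(\alpha\)-contraction on \(B_b(\Ical)\) by the argument of \cref{prop:bellman-contraction}, and its fixed point is \(J_h\). This identification uses the Markov property of \cref{thm:information-controlled-markov} and a truncation of the series, exactly as in the proof of \cref{thm:discounted-value-equation}. The selector gives \(T_fV^*\leq \mathcal BV^*+\delta=V^*+\delta\), and iterating with monotonicity of \(T_f\) gives
\[
J_f=\lim_n T_f^nV^*\leq V^*+\delta/(1-\alpha).
\]
Actually this estimate already finishes the proof: the stationary deterministic policy \(f\) is Markov and has cost at most \(V^*+\delta/(1-\alpha)\leq V^*+\varepsilon\). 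We therefore take the simpler route and choose \(\delta=\varepsilon(1-\alpha)\), which makes the finite-horizon construction unnecessary.

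The key steps, in order, are the following. First, obtain \(f=f_{\delta,V^*}\); this is legitimate because \(V^*\in B_b(\Ical)\) by \cref{thm:discounted-value-equation}. Second, show \(T_h\) is monotone and an \(\alpha\)-contraction, and that \(J_h\) is its unique fixed point. Third, induct on \(n\) to get \(T_f^nV^*\leq V^*+\delta\sum_{k<n}\alpha^k\). The induction step reads
\[
T_f^{n+1}V^*\leq T_f\bigl(V^*+c_n\bigr)=T_fV^*+\alpha c_n\leq V^*+\delta+\alpha c_n,
\]
where \(c_n\) denotes the constant accumulated after \(n\) steps. Fourth, let \(n\to\infty\) and use \(T_f^nV^*\to J_h\) with \(h=f\).

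The main obstacle is the second step, the identification \(J_f=T_fJ_f\). It requires checking that, under the policy \(f\), the process \((I_t,U_t)=(I_t,f(I_t))\) is Markov with kernel \(\mathcal K(\cdot\mid I,f(I))\); this follows from \cref{thm:information-controlled-markov}. Conditioning on \(I_1\) then yields
\[
J_f(I)=g(I,f(I))+\alpha\,\E_I^f[J_f(I_1)].
\]
Measurability of \(J_f\) also has to be verified, and it comes from \(J_f\) being the uniform limit of the truncated sums \(\sum_{t<n}\alpha^t\E[g(I_t,f(I_t))]\), each of which is Borel because it is built from the Borel kernel \(\mathcal K\), the Borel function \(g\), and the Borel selector \(f\).
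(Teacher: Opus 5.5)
Your argument is correct, but it takes a different route from the paper. The paper applies $\varepsilon_t$-selectors to the successive finite-horizon value functions, choosing a summable sequence $\{\varepsilon_t\}$ whose discounted total is below $\varepsilon$. It then handles the infinite horizon with the uniform tail bound $\alpha^nC_c/(1-\alpha)$, so its policy is Markov but in general nonstationary.

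You apply the selector hypothesis once, to $v=V^*$. This is legitimate because \cref{thm:discounted-value-equation} gives $V^*\in B_b(\Ical)$ and $V^*=\mathcal BV^*$. Monotonicity of $T_f$, the identity $T_f(w+c)=T_fw+\alpha c$, and the contraction limit $T_f^nV^*\to J_f$ then give $J_f\leq V^*+\delta/(1-\alpha)$.

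Your route is shorter and proves more: the $\varepsilon$-optimal policy is \emph{stationary} deterministic, and hence in particular deterministic Markov. The price is that you rely on the infinite-horizon identification $V^*=\mathcal BV^*$ and on the policy-evaluation identity $J_f=T_fJ_f$. The paper's construction, by contrast, needs only the finite-horizon dynamic-programming recursion plus truncation.

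For the write-up, delete the opening finite-horizon construction with $v_n$, $f_n$, $N$, since it is never used. Also, $J_f=T_fJ_f$ and the Borel measurability of $J_f$ follow in one line from $J_f=\sum_{t\geq0}\alpha^tK_f^tg_f$, where $g_f(I):=g(I,f(I))$ and $K_f(D\mid I):=\mathcal K(D\mid I,f(I))$. This holds because, under the stationary policy $f$, the information state is a Markov chain with kernel $K_f$.
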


\begin{proof}
	Apply measurable \(\varepsilon_t\)-selectors to the successive finite-horizon value functions, choosing a summable sequence \(\{\varepsilon_t\}\) whose discounted total is less than \(\varepsilon\). The dynamic-programming recursion and the uniform tail bound then yield the claim.
\end{proof}

\subsection{Existence of stationary optimal policies}
\label{subsec:stationary-optimal-policies}

To guarantee that the infimum in the Bellman equation is attained by a measurable stationary selector, we impose compactness and weak-continuity assumptions.

\begin{assumption}[Weakly continuous discounted model]
	\label{ass:weakly-continuous-model}
	The following conditions hold.
	\begin{enumerate}[label=\textnormal{(W\arabic*)},leftmargin=*]
		\item The action space \(\Ucal\) is compact.
		\item The filtered cost \(g:\Ical\times\Ucal\to\R\) is lower semicontinuous.
		\item The information-state kernel \(\mathcal K\) is weakly continuous: for every \(v\in C_b(\Ical)\),
		\begin{equation*}
			(I,u)\longmapsto
			\int_{\Ical}v(I')\,
			\mathcal K(dI'\mid I,u)
			\label{eq:weak-continuity-kernel}
		\end{equation*}
		is continuous on \(\Ical\times\Ucal\).
	\end{enumerate}
\end{assumption}

Here \(C_b(\Ical)\) denotes the bounded continuous real-valued functions on \(\Ical\), where \(\Ical\) is endowed with the class-disjoint topology inherited from the weak topology on each \(\Pcal_j\).

\begin{proposition}[Sufficient conditions for weak continuity of the information kernel]
	\label{prop:sufficient-weak-continuity}
	Assume that, for every \(k\in\mathsf J\),
	\begin{enumerate}[label=\textnormal{(\roman*)},leftmargin=*]
		\item \((I,u)\mapsto\vartheta_k(I,u)\) is continuous;
		\item the filter update
		\[
		(I,u)\longmapsto
		\Phi_k(\mu,u)
		\]
		is weakly continuous on every reachable set on which
		\(\vartheta_k(I,u)>0\);
		\item whenever \((I_n,u_n)\to(I,u)\) and
		\(\vartheta_k(I,u)=0\),
		\[
		\vartheta_k(I_n,u_n)\longrightarrow0.
		\]
	\end{enumerate}
	Then \(\mathcal K\) is weakly continuous. Sufficient primitive conditions in terms of \(G\), the perturbation laws, and class boundaries are stated and proved in \cref{app:weak-continuity-kernel}. Their verification for the reference model is given in \cref{sec:reference-model}.
\end{proposition}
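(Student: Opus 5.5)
We must show that for $v\in C_b(\Ical)$ and $(I_n,u_n)\to(I,u)$ in $\Ical\times\Ucal$,
\[
\sum_{k\in\mathsf J}\vartheta_k(I_n,u_n)\,v\bigl(k,\Phi_k(\mu_n,u_n)\bigr)\longrightarrow\sum_{k\in\mathsf J}\vartheta_k(I,u)\,v\bigl(k,\Phi_k(\mu,u)\bigr),
\]
using the representation \cref{eq:information-kernel-integral} of $\mathcal K$. Since $\mathsf J$ is finite, it suffices to prove convergence of each summand. We therefore fix $k$ and split into two cases according to whether $\vartheta_k(I,u)$ is positive or zero.

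\emph{Case $\vartheta_k(I,u)>0$.} By (i), $\vartheta_k(I_n,u_n)\to\vartheta_k(I,u)>0$, so $\vartheta_k(I_n,u_n)>0$ for all large $n$. Hence $(I_n,u_n)$ eventually lies in the set where the first branch of \cref{eq:global-filter-map} applies, and (ii) gives $\Phi_k(\mu_n,u_n)\to\Phi_k(\mu,u)$ weakly. All these measures lie in $\Pcal_k$. Because $\Ical$ carries the class-disjoint topology, with the weak topology on the slice $\{k\}\times\Pcal_k$, we get $(k,\Phi_k(\mu_n,u_n))\to(k,\Phi_k(\mu,u))$ in $\Ical$. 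Continuity of $v$ then gives convergence of $v(k,\Phi_k(\mu_n,u_n))$, and the product with the convergent scalars $\vartheta_k(I_n,u_n)$ converges.

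\emph{Case $\vartheta_k(I,u)=0$.} The limit term is $0$, whatever $\rho_k$ is. For the approximating terms,
\[
\bigl|\vartheta_k(I_n,u_n)\,v(\cdots)\bigr|\le\|v\|_\infty\,\vartheta_k(I_n,u_n)\to0
\]
by (iii), which is implied by (i) but is stated to cover the discontinuity of $\Phi_k$ across the branch switch. This estimate holds whether or not the branch at $(I_n,u_n)$ is the $\rho_k$ extension, since $v$ is bounded.

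Summing over $k$ gives continuity of $(I,u)\mapsto\int v\,d\mathcal K(\cdot\mid I,u)$, which is (W3).

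The main subtlety is the first case. We need the correct interpretation of ``weakly continuous on every reachable set on which $\vartheta_k>0$'': continuity at $(I,u)$ relative to the open set $\{\vartheta_k>0\}$. By (i) this set is open, so the sequence eventually enters it, and we invoke (ii) there. We also need the fact that convergence in $\Pcal_k$ with the weak topology is convergence in $\Ical$. This follows from the definition of the class-disjoint topology: the label is eventually constant, equal to $k$. For the original points, convergence $I_n\to I$ similarly forces the labels $j_n$ to eventually equal $j$, so that $\mu_n\to\mu$ weakly. This is what hypotheses (i) and (ii) presuppose.

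The appendix-level primitive conditions are not needed here; they serve only to verify (i)–(iii).
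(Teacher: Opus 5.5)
Your proof is correct and follows the paper's own argument in the final paragraph of the weak-continuity appendix: expand $\int v\,d\mathcal K(\cdot\mid I_n,u_n)$ as the finite sum over $k\in\mathsf J$, let both factors converge on branches with $\vartheta_k(I,u)>0$, and send branches with $\vartheta_k(I,u)=0$ to zero via $\|v\|_\infty\,\vartheta_k(I_n,u_n)\to0$, independently of $\rho_k$. Your remarks on the openness of $\{\vartheta_k>0\}$, the eventual constancy of labels in the disjoint-union topology, and your reading of ``reachable set'' in (ii) as continuity relative to $\{\vartheta_k>0\}$ make explicit what the paper leaves implicit.
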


\begin{proof}
	See \cref{app:weak-continuity-kernel}.
\end{proof}

\begin{theorem}[Existence of a stationary deterministic optimal policy]
	\label{thm:stationary-optimal-policy}
	Under \cref{ass:bounded-cost,ass:weakly-continuous-model}, the optimal value \(V^*\) is the unique bounded solution of
	\[
	V=\mathcal BV.
	\]
	Moreover, there exists a Borel measurable selector
	\[
	f^*:\Ical\longrightarrow\Ucal
	\]
	such that
	\begin{equation}
		\begin{aligned}
			V^*(I)
			&=
			g(I,f^*(I))
			\\
			&\qquad +
			\alpha
			\int_{\Ical}
			V^*(I')\,
			\mathcal K(dI'\mid I,f^*(I)),
			\qquad I\in\Ical.
		\end{aligned}
		\label{eq:stationary-optimal-selector}
	\end{equation}
	The stationary deterministic policy \(U_t=f^*(I_t)\) is optimal:
	\begin{equation}
		J_{f^*}(I)=V^*(I),
		\qquad I\in\Ical.
		\label{eq:stationary-policy-optimality}
	\end{equation}
\end{theorem}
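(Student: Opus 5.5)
The plan is the standard semicontinuous-model argument of discounted dynamic programming (Hernández-Lerma--Lasserre type), run on the class of bounded lower semicontinuous functions rather than on all of \(B_b(\Ical)\).

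\textbf{Step 1: the closed subspace.} Let \(L_b(\Ical)\subseteq B_b(\Ical)\) denote the bounded lower semicontinuous functions on \(\Ical\), with the class-disjoint topology from \cref{ass:weakly-continuous-model}. Since each \(\Pcal_j\) with the weak topology is a Borel subset of a Polish space and \(\mathsf J\) is finite, \(\Ical\) is metrizable and separable. A uniform limit of lower semicontinuous functions is lower semicontinuous, so \(L_b(\Ical)\) is a closed subset of the Banach space \((B_b(\Ical),\|\cdot\|_\infty)\).

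\textbf{Step 2: \(\mathcal B\) maps \(L_b(\Ical)\) into itself and admits selectors.} Fix \(v\in L_b(\Ical)\). On a metrizable space, \(v\) is the increasing pointwise limit of bounded continuous functions \(v_n\in C_b(\Ical)\) (e.g.\ Lipschitz inf-convolutions). By (W3), \((I,u)\mapsto\int v_n\,d\mathcal K(\cdot\mid I,u)\) is continuous, and by monotone convergence
\[
(I,u)\mapsto\int v\,d\mathcal K(\cdot\mid I,u)=\sup_n\int v_n\,d\mathcal K(\cdot\mid I,u)
\]
is lower semicontinuous. Together with (W2), the map
\[
G_v(I,u):=g(I,u)+\alpha\int v\,d\mathcal K(\cdot\mid I,u)
\]
is lower semicontinuous and bounded on \(\Ical\times\Ucal\). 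Since \(\Ucal\) is compact by (W1) and the constraint set is \(\Ucal\) for every \(I\) by \cref{eq:state-independent-actions}, two standard facts apply:
\begin{enumerate}[label=\textnormal{(\alph*)},leftmargin=*]
\item the infimum over \(u\) is attained for every \(I\), and \(I\mapsto\min_u G_v(I,u)\) is lower semicontinuous (the usual sequential argument: extract a convergent subsequence of near-minimizers using compactness of \(\Ucal\));
\item the measurable selection theorem for lower semicontinuous functions on compact action sets (Brown--Purves / Schäl) gives a Borel \(f_v:\Ical\to\Ucal\) with \(G_v(I,f_v(I))=(\mathcal Bv)(I)\).
\end{enumerate}
In particular, \cref{ass:measurable-dp} holds on \(L_b(\Ical)\). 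The proof of \cref{prop:bellman-contraction} uses only the kernel property, so \(\mathcal B\) is an \(\alpha\)-contraction on \(L_b(\Ical)\) and has a unique fixed point \(V\in L_b(\Ical)\).

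\textbf{Step 3: identification \(V=V^*\) and optimality of \(f^*\).} Set \(f^*:=f_V\) from Step 2, which gives \cref{eq:stationary-optimal-selector} with \(V\) in place of \(V^*\).

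\emph{Upper bound.} Iterating the equality along the Markov process of \cref{thm:information-controlled-markov} under \(U_t=f^*(I_t)\) gives
\[
V(I)=\E_I^{f^*}\Bigl[\sum_{t=0}^{n-1}\alpha^tg(I_t,U_t)\Bigr]+\alpha^n\E_I^{f^*}[V(I_n)].
\]
Letting \(n\to\infty\) with \(|V|\leq C_c/(1-\alpha)\) yields \(V=J_{f^*}\ge V^*\).

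\emph{Lower bound.} For an arbitrary \(\boldsymbol\gamma\in\Gamma\), the inequality \(V\le G_V(I,u)\) for every \(u\), combined with the conditional Markov property under history-dependent randomized controls, gives
\[
\E^{\boldsymbol\gamma}_I\bigl[\alpha^tV(I_t)-\alpha^{t+1}V(I_{t+1})\bigr]\le\E^{\boldsymbol\gamma}_I\bigl[\alpha^tg(I_t,U_t)\bigr].
\]
Summing over \(t\) and passing to the limit gives \(V\le J_{\boldsymbol\gamma}\), hence \(V\le V^*\). Therefore \(V=V^*=J_{f^*}\), which is \cref{eq:stationary-policy-optimality}.

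\emph{Uniqueness in \(B_b(\Ical)\).} Any bounded Borel solution \(W=\mathcal BW\) satisfies \(\|W-V^*\|_\infty=\|\mathcal BW-\mathcal BV^*\|_\infty\le\alpha\|W-V^*\|_\infty\), using the pointwise contraction estimate from \cref{prop:bellman-contraction}. Hence \(W=V^*\).

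\textbf{Main obstacle.} The delicate step is Step 2: obtaining lower semicontinuity of \((I,u)\mapsto\int v\,d\mathcal K\) for merely lower semicontinuous \(v\), and applying the selection theorem on the non-compact, non-Polish-in-general space \(\Ical\). I would first establish that \(\Ical\) is metrizable (so that \(v\) has a monotone approximation by continuous functions), and then cite the selection theorem in the form valid for Borel subsets of Polish spaces. If metrizability is in doubt, the fallback is to work directly in the weak topology of \(\Pcal(\Xcal)\), which is Polish because \(\Xcal\) is standard Borel (after fixing a Polish topology compatible with \(\B(\Xcal)\)).
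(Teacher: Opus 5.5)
Your proposal is correct and uses the same ingredients as the paper: compactness of \(\Ucal\), semicontinuity, a measurable selection theorem, the contraction of \(\mathcal B\), and policy verification. The genuine difference is the function class on which you run them, and it matters.

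The paper's sketch (the proof and the end of \cref{app:weak-continuity-kernel}) proves lower semicontinuity of the one-step objective only for continuation values \(v\in C_b(\Ical)\). It then applies this ``along value iteration'' and ``passes to the contraction limit.'' This leaves three loose ends:
\begin{enumerate}
\item Since (W2) makes \(g\) only lower semicontinuous, \(\mathcal Bv\) is in general not continuous even for continuous \(v\). The iterates therefore leave \(C_b(\Ical)\) after one step.
\item Selectors for the iterates \(\mathcal B^nv_0\) need an extra argument to produce a selector for the limit.
\item The identification of the fixed point with \(V^*\) is delegated to \cref{thm:discounted-value-equation}, which presupposes \cref{ass:measurable-dp}; that assumption is not among the hypotheses here.
\end{enumerate}

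Your approach removes all three issues. You work in the closed, \(\mathcal B\)-invariant class of bounded lower semicontinuous functions, and you approximate each such \(v\) monotonically by continuous \(v_n\), which makes \((I,u)\mapsto\int v\,d\mathcal K\) lower semicontinuous. As a result, the fixed point lies in the class and the selector is taken directly at \(V\). Your two-sided verification then gives \(V=J_{f^*}=V^*\) and uniqueness in \(B_b(\Ical)\) without assuming \cref{ass:measurable-dp}.

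The paper's shorter route is adequate when \(g\) is continuous, since then \(C_b(\Ical)\) is itself invariant by Berge's theorem. Your version is the one that matches the stated lower semicontinuity hypothesis. Its modest extra cost is checking that \(\Ical\) is metrizable, so that the inf-convolution approximation and the Brown--Purves/Sch\"al selection theorem apply.
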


\begin{proof}
	Under \cref{ass:weakly-continuous-model}, the one-step objective in \cref{eq:bellman-operator} is lower semicontinuous in \(u\) whenever the continuation value is bounded and continuous. Compactness of \(\Ucal\) yields attainment of the minimum, and the measurable maximum theorem gives a Borel minimizing selector. Applying this argument to the value-iteration sequence and using contraction yields a stationary selector satisfying \cref{eq:stationary-optimal-selector}. Policy verification then gives \cref{eq:stationary-policy-optimality}. Technical details are provided in \cref{app:weak-continuity-kernel}.
\end{proof}

\begin{remark}[Separation of exact and regularity results]
	\label{rem:exact-versus-regularity}
	The exact information-state reduction and Bellman fixed-point equation do not require a common dominating measure for all classes. Weak continuity and selector existence are additional regularity properties. In particular, atomic, continuous, and mixed conditional laws remain admissible, provided the classwise filter maps satisfy the continuity conditions in \cref{prop:sufficient-weak-continuity}.
\end{remark}

\begin{remark}[Relevance for finite-dimensional approximation]
	\label{rem:weak-continuity-approximation}
	The continuity assumptions introduced here will also support the approximation analysis. Uniform value-error bounds generally require stronger estimates, including quantitative continuity of the filter and, on the reachable branches under consideration, a positive lower bound on the class probabilities used in the normalization.
\end{remark}

%
%

\section{Finite-dimensional approximation}
\label{sec:finite-dimensional-approximation}

The exact information-state model evolves on a space of probability measures and is therefore infinite dimensional. This section introduces a class-dependent finite-dimensional approximation that preserves the observable partition and approximates only the conditional measure within each observed class. The construction is formulated first at the level of measures and then represented through densities whenever a classwise dominating measure is available.

Throughout this section, \(d_{\mathrm{TV}}\) denotes the total variation distance,
\begin{equation}
	d_{\mathrm{TV}}(\mu,\nu)
	:=
	\sup_{C\in\B(\Xcal)}
	|\mu(C)-\nu(C)|.
	\label{eq:tv-distance}
\end{equation}
If \(\mu,\nu\ll\lambda_j\), then
\begin{equation*}
	d_{\mathrm{TV}}(\mu,\nu)
	=
	\frac12
	\left\|
	\frac{d\mu}{d\lambda_j}
	-
	\frac{d\nu}{d\lambda_j}
	\right\|_{L^1(\lambda_j)}.
	\label{eq:tv-l1-equivalence}
\end{equation*}

\subsection{Class-dependent approximation families}
\label{subsec:class-dependent-families}

For each observable class \(O_j\), \(j\in\mathsf J\), let
\[
\Pcal_j
=
\{\mu\in\Pcal(\Xcal):\mu(O_j)=1\}.
\]
Let \(\mathcal R_j\subseteq\Pcal_j\) denote the set of class-\(j\) conditional measures reachable from the prescribed initial distribution under admissible controls and positive-probability observation histories. Restricting the approximation analysis to \(\mathcal R_j\) is natural because these are precisely the conditional laws that may occur in the model.

For every approximation level \(N\in\N\), let
\begin{equation*}
	\mathcal Z_{N,j}\subseteq\Pcal_j
	\label{eq:class-approximation-family}
\end{equation*}
be a finite-dimensional family of probability measures supported on \(O_j\). The corresponding approximate information space is
\begin{equation}
	\Ical_N
	:=
	\bigcup_{j\in\mathsf J}
	\left(
	\{j\}\times\mathcal Z_{N,j}
	\right).
	\label{eq:approximate-information-space}
\end{equation}

The principal concrete family used in this article is formed by histograms. Suppose first that the continuous component of \(O_j\) admits a finite measurable partition
\[
\mathscr C_{N,j}
=
\{C_{N,j,1},\ldots,C_{N,j,M_{N,j}}\},
\]
with
\[
C_{N,j,\ell}\cap C_{N,j,m}=\varnothing
\quad(\ell\neq m),
\qquad
\bigcup_{\ell=1}^{M_{N,j}}C_{N,j,\ell}=O_j
\]
up to \(\lambda_j\)-null sets. Recall that \(M_{N,j}\) represent the number of histogram cells used in class \(O_j\). Define the normalized cell measures
\begin{equation}
	\lambda_{N,j,\ell}^{\circ}(C)
	:=
	\frac{\lambda_j(C\cap C_{N,j,\ell})}
	{\lambda_j(C_{N,j,\ell})},
	\qquad
	\lambda_j(C_{N,j,\ell})>0.
	\label{eq:normalized-cell-measure}
\end{equation}
The histogram family is
\begin{equation}
	\mathcal Z_{N,j}^{\mathrm{hist}}
	:=
	\left\{
	\sum_{\ell=1}^{M_{N,j}}
	w_\ell\lambda_{N,j,\ell}^{\circ}
	:
	w_\ell\geq0,\;
	\sum_{\ell=1}^{M_{N,j}}w_\ell=1
	\right\}.
	\label{eq:histogram-family}
\end{equation}

If \(O_j\) contains atoms \(a_{j,1},\ldots,a_{j,m_j}\), they are represented exactly by including the corresponding Dirac measures:
\begin{equation}
	\begin{aligned}
		\mathcal Z_{N,j}^{\mathrm{mix}}
		:=
		\Bigg\{
		&\sum_{q=1}^{m_j}
		\omega_q\delta_{a_{j,q}}
		+
		\sum_{\ell=1}^{M_{N,j}}
		\omega_{m_j+\ell}
		\lambda_{N,j,\ell}^{\circ}
		\\
		&:
		\omega_s\geq0,\quad
		s=1,\ldots,m_j+M_{N,j},
		\\
		&\hspace{1.2em}
		\sum_{s=1}^{m_j+M_{N,j}}\omega_s=1
		\Bigg\}.
	\end{aligned}
	\label{eq:mixed-histogram-family}
\end{equation}
Thus atomic, continuous, and mixed conditional laws are covered by one measure-valued construction.

\begin{remark}[Alternative finite-dimensional families]
	\label{rem:alternative-approximation-families}
	The analysis below applies to any family \(\mathcal Z_{N,j}\subseteq\Pcal_j\) equipped with a probability-preserving projection satisfying the stated consistency conditions. Alternatives include positive spline densities, truncated kernel expansions with nonnegative renormalization, and finite mixtures of prescribed component distributions. Histogram families are used as the primary construction because they preserve support, positivity, and total mass by design and admit directly verifiable total-variation estimates.
\end{remark}

\subsection{Projection operator}
\label{subsec:projection-operator}

For each \(j\in\mathsf J\), let
\begin{equation*}
	\Pi_{N,j}:\mathcal R_j\longrightarrow\mathcal Z_{N,j}
	\label{eq:classwise-projection}
\end{equation*}
be a measurable probability-preserving projection. For the histogram family, define
\begin{equation}
	\Pi_{N,j}^{\mathrm{hist}}\mu
	:=
	\sum_{\ell=1}^{M_{N,j}}
	\mu(C_{N,j,\ell})\lambda_{N,j,\ell}^{\circ},
	\label{eq:histogram-projection}
\end{equation}
with the atomic masses retained exactly when \cref{eq:mixed-histogram-family} is used.

The projection error on the reachable class-\(j\) set is
\begin{equation}
	\varepsilon_{N,j}^{\Pi}
	:=
	\sup_{\mu\in\mathcal R_j}
	d_{\mathrm{TV}}
	\left(
	\Pi_{N,j}\mu,\mu
	\right),
	\label{eq:classwise-projection-error}
\end{equation}
and the global reachable projection error is
\begin{equation}
	\varepsilon_N^{\Pi}
	:=
	\max_{j\in\mathsf J}
	\varepsilon_{N,j}^{\Pi}.
	\label{eq:global-projection-error}
\end{equation}

\begin{assumption}[Reachable-set consistency]
	\label{ass:reachable-projection-consistency}
	For every \(j\in\mathsf J\),
	\begin{equation*}
		\varepsilon_{N,j}^{\Pi}\longrightarrow0
		\qquad\text{as }N\to\infty.
		\label{eq:reachable-consistency}
	\end{equation*}
\end{assumption}

The restriction to \(\mathcal R_j\) is essential. Uniform approximation over all of \(\Pcal_j\) is generally too strong, whereas \(\mathcal R_j\) can be explored numerically through simulated or recursively generated reachable beliefs. Hence \cref{eq:classwise-projection-error} is, at least in principle, verifiable in practice.

\subsection{Approximate filter}
\label{subsec:approximate-filter}

The exact class-\(k\) update is
\[
\Phi_k(\mu,u)\in\Pcal_k.
\]
The projected update is defined by
\begin{equation}
	\Phi_{N,k}(\mu,u)
	:=
	\Pi_{N,k}\!\left(\Phi_k(\mu,u)\right),
	\label{eq:approximate-filter}
\end{equation}
whenever \(p_k(\mu,u)>0\). On zero-mass branches, \(\Phi_{N,k}\) is set equal to the projection of the fixed extension \(\rho_k\):
\begin{equation}
	\Phi_{N,k}(\mu,u)
	:=
	\Pi_{N,k}\rho_k,
	\qquad
	p_k(\mu,u)=0.
	\label{eq:approximate-zero-mass-filter}
\end{equation}

The approximate information-state update is
\begin{equation}
	\Psi_N(I,u,k)
	:=
	\left(
	k,\Phi_{N,k}(\mu,u)
	\right),
	\qquad
	I=(j,\mu).
	\label{eq:approximate-information-update}
\end{equation}

In the present theoretical construction, the branch probabilities remain exact:
\begin{equation}
	\vartheta_{N,k}(I,u)
	:=
	\vartheta_k(I,u)
	=
	p_k(\mu,u).
	\label{eq:exact-branch-probabilities}
\end{equation}
Thus the approximation acts only on the posterior measure and not on the probability of observing each class.

The associated projected transition kernel is
\begin{equation}
	\mathcal K_N(D\mid I,u)
	:=
	\sum_{k\in\mathsf J}
	\vartheta_k(I,u)
	\one_D\!\left(
	k,\Phi_{N,k}(\mu,u)
	\right).
	\label{eq:approximate-information-kernel}
\end{equation}

\subsection{Consistency}
\label{subsec:approximation-consistency}

Define the reachable filter error
\begin{equation}
	\varepsilon_N^{\Phi}
	:=
	\sup_{\substack{
			I=(j,\mu)\in\mathcal R\\
			u\in\Ucal,\;k\in\mathsf J\\
			\vartheta_k(I,u)>0
	}}
	d_{\mathrm{TV}}
	\left(
	\Phi_{N,k}(\mu,u),
	\Phi_k(\mu,u)
	\right),
	\label{eq:reachable-filter-error}
\end{equation}
where
\[
\mathcal R
:=
\bigcup_{j\in\mathsf J}
\left(
\{j\}\times\mathcal R_j
\right).
\]

\begin{proposition}[Consistency of the projected filter]
	\label{prop:projected-filter-consistency}
	Under \cref{ass:reachable-projection-consistency},
	\begin{equation}
		\varepsilon_N^{\Phi}
		\leq
		\varepsilon_N^{\Pi},
		\label{eq:filter-projection-error-bound}
	\end{equation}
	provided every reachable exact posterior
	\[
	\Phi_k(\mu,u)
	\]
	belongs to \(\mathcal R_k\). Consequently,
	\begin{equation*}
		\varepsilon_N^{\Phi}\longrightarrow0.
		\label{eq:filter-consistency-limit}
	\end{equation*}
\end{proposition}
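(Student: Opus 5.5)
The bound is essentially a pointwise comparison followed by taking suprema. Fix an index in the supremum defining \(\varepsilon_N^{\Phi}\) in \cref{eq:reachable-filter-error}: a reachable information state \(I=(j,\mu)\in\mathcal R\), an action \(u\in\Ucal\), and a class \(k\in\mathsf J\) with \(\vartheta_k(I,u)>0\). By \cref{eq:exact-branch-probabilities}, \(p_k(\mu,u)=\vartheta_k(I,u)>0\). Hence the approximate update is given by the first branch \cref{eq:approximate-filter}, namely \(\Phi_{N,k}(\mu,u)=\Pi_{N,k}(\Phi_k(\mu,u))\), and not by the zero-mass convention \cref{eq:approximate-zero-mass-filter}. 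The exact update \(\Phi_k(\mu,u)\) is likewise the normalized quotient in \cref{eq:global-filter-map}, so the fixed measure \(\rho_k\) plays no role.

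Next I would set \(\nu:=\Phi_k(\mu,u)\). By the proviso in the statement, every reachable exact posterior belongs to \(\mathcal R_k\), so \(\nu\in\mathcal R_k\). This is exactly what is needed for \(\Pi_{N,k}\nu\) to be defined, since \(\Pi_{N,k}\) has domain \(\mathcal R_k\). The definition \cref{eq:classwise-projection-error} then gives directly
\[
d_{\mathrm{TV}}\bigl(\Phi_{N,k}(\mu,u),\Phi_k(\mu,u)\bigr)=d_{\mathrm{TV}}(\Pi_{N,k}\nu,\nu)\le\varepsilon_{N,k}^{\Pi}\le\varepsilon_N^{\Pi},
\]
where the last inequality is \cref{eq:global-projection-error}. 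Since this holds for every admissible index \((I,u,k)\), taking the supremum yields \cref{eq:filter-projection-error-bound}.

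For the limit, \(\mathsf J\) is finite, so by \cref{ass:reachable-projection-consistency} the quantity \(\varepsilon_N^{\Pi}=\max_{j\in\mathsf J}\varepsilon_{N,j}^{\Pi}\) tends to zero. Because \(0\le\varepsilon_N^{\Phi}\le\varepsilon_N^{\Pi}\), we get \(\varepsilon_N^{\Phi}\to0\).

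The only point needing care is the membership \(\Phi_k(\mu,u)\in\mathcal R_k\). This is where reachability enters, so I would justify it briefly. If \(\mu\) is reached along some admissible positive-probability history and \(u\) is admissible, then appending the action \(u\) and the observation \(k\), which has positive conditional probability \(p_k(\mu,u)\), gives another admissible positive-probability history. Along that extended history, \cref{thm:main-recursive-filter}(iv) shows the posterior equals \(\Phi_k(\mu,u)\). So this posterior is again reachable, consistent with the proviso. No continuity or lower bound on \(p_k\) is required, because the normalization is exact and only the projection introduces error.
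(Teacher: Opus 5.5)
Your proof is correct and follows the paper's argument: on a positive-mass branch \(\Phi_{N,k}(\mu,u)=\Pi_{N,k}\Phi_k(\mu,u)\), the membership \(\Phi_k(\mu,u)\in\mathcal R_k\) lets you apply the definition of \(\varepsilon_{N,k}^{\Pi}\le\varepsilon_N^{\Pi}\), and you then take the supremum. Your extra remarks are sound additions that the paper leaves implicit: the consistency assumption is needed only for the limit, and the proviso follows from the definition of reachability by appending a positive-probability observation.
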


\begin{proof}
	For every reachable positive-mass branch,
	\[
	\Phi_{N,k}(\mu,u)
	=
	\Pi_{N,k}\Phi_k(\mu,u).
	\]
	Since \(\Phi_k(\mu,u)\in\mathcal R_k\), the definition of
	\(\varepsilon_{N,k}^{\Pi}\) gives
	\[
	d_{\mathrm{TV}}
	\left(
	\Phi_{N,k}(\mu,u),
	\Phi_k(\mu,u)
	\right)
	\leq
	\varepsilon_{N,k}^{\Pi}
	\leq
	\varepsilon_N^{\Pi}.
	\]
	Taking the supremum proves \cref{eq:filter-projection-error-bound}.
\end{proof}

To compare the exact and projected kernels, let
\[
\mathrm{Lip}_b(\Ical)
\]
denote the bounded functions that are Lipschitz on each class component with respect to total variation. For \(v\in\mathrm{Lip}_b(\Ical)\), let \(L_v\) be a common classwise Lipschitz constant:
\begin{equation*}
	|v(k,\mu)-v(k,\nu)|
	\leq
	L_v\,d_{\mathrm{TV}}(\mu,\nu).
	\label{eq:value-classwise-lipschitz}
\end{equation*}

\begin{proposition}[Kernel consistency on Lipschitz test functions]
	\label{prop:approximate-kernel-consistency}
	For every \(v\in\mathrm{Lip}_b(\Ical)\),
	\begin{equation*}
		\begin{aligned}
			&\sup_{\substack{I\in\mathcal R\\ u\in\Ucal}}
			\Bigg|
			\int_{\mathcal I}
			v(I')\,\mathcal K_N(dI'\mid I,u)
			\\
			&\qquad\qquad
			-
			\int_{\mathcal I}
			v(I')\,\mathcal K(dI'\mid I,u)
			\Bigg|
			\\
			&\qquad\leq
			L_v\varepsilon_N^{\Phi}.
		\end{aligned}
		\label{eq:kernel-consistency-bound}
	\end{equation*}
\end{proposition}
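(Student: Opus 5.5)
The proof is a direct termwise comparison of the two finite sums defining the kernels. It rests on one structural feature of the construction: by \cref{eq:exact-branch-probabilities}, the projected kernel $\mathcal K_N$ in \cref{eq:approximate-information-kernel} and the exact kernel $\mathcal K$ in \cref{eq:information-transition-kernel} put the \emph{same} weights $\vartheta_k(I,u)$ on the same class labels $k$. Only the second coordinate of the atoms differs.

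First, fix $v\in\mathrm{Lip}_b(\Ical)$, $I=(j,\mu)\in\mathcal R$ and $u\in\Ucal$. By \cref{eq:information-kernel-integral} and its analogue for $\mathcal K_N$,
\[
\int_{\Ical}v\,d\mathcal K_N(\cdot\mid I,u)-\int_{\Ical}v\,d\mathcal K(\cdot\mid I,u)
=\sum_{k\in\mathsf J}\vartheta_k(I,u)\bigl[v(k,\Phi_{N,k}(\mu,u))-v(k,\Phi_k(\mu,u))\bigr].
\]
This is well defined on $\Ical$, because $\Phi_{N,k}(\mu,u)\in\mathcal Z_{N,k}\subseteq\Pcal_k$ and $\Phi_k(\mu,u)\in\Pcal_k$. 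Hence both update points lie in the same class component $\{k\}\times\Pcal_k$, where the classwise Lipschitz bound for $v$ applies.

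Second, I would split the sum according to whether $\vartheta_k(I,u)=0$ or $\vartheta_k(I,u)>0$.

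On zero-mass branches the summand vanishes whatever values are assigned through $\rho_k$ and $\Pi_{N,k}\rho_k$ in \cref{eq:approximate-zero-mass-filter}. This is the same mechanism as in \cref{prop:information-kernel-well-defined}.

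On positive-mass branches, the Lipschitz property gives
\[
|v(k,\Phi_{N,k}(\mu,u))-v(k,\Phi_k(\mu,u))|\le L_v\,d_{\mathrm{TV}}(\Phi_{N,k}(\mu,u),\Phi_k(\mu,u)).
\]
The triple $(I,u,k)$ belongs exactly to the index set of the supremum in \cref{eq:reachable-filter-error}, since $I\in\mathcal R$, $u\in\Ucal$ and $\vartheta_k(I,u)>0$. So this total-variation distance is at most $\varepsilon_N^{\Phi}$.

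Third, applying the triangle inequality and $\sum_k\vartheta_k(I,u)\le1$ from \cref{eq:observation-probability-simplex} bounds the absolute difference by $L_v\varepsilon_N^{\Phi}$. The bound is uniform in $(I,u)$, so taking the supremum over $I\in\mathcal R$ and $u\in\Ucal$ yields the claim. If one also wants the limit, \cref{prop:projected-filter-consistency} then gives convergence to zero under its hypothesis.

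The argument has no genuine analytic obstacle. The only step needing care is the bookkeeping: the supremum defining $\varepsilon_N^{\Phi}$ ranges only over reachable states and positive-probability branches. This is why the zero-mass branches must be removed first, using the exact branch weights, and why the statement restricts to $I\in\mathcal R$ rather than all of $\Ical$.
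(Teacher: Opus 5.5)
Your proposal is correct and matches the paper's proof: both write the difference of the two integrals as $\sum_{k\in\mathsf J}\vartheta_k(I,u)\bigl[v(k,\Phi_{N,k}(\mu,u))-v(k,\Phi_k(\mu,u))\bigr]$ using the shared exact branch weights, apply the classwise Lipschitz bound and the definition of $\varepsilon_N^{\Phi}$ term by term, and use that the weights sum to one. Your explicit removal of the zero-mass branches is a small improvement in care, because the paper's one-line argument applies the $\varepsilon_N^{\Phi}$ bound to every $k$, although the supremum in \cref{eq:reachable-filter-error} only covers branches with $\vartheta_k(I,u)>0$.
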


\begin{proof}
	Using the exact branch probabilities in both kernels,
	\[
	\begin{aligned}
		&\left|
		\int v\,d\mathcal K_N(\cdot\mid I,u)
		-
		\int v\,d\mathcal K(\cdot\mid I,u)
		\right|
		\\
		&\quad\leq
		\sum_{k\in\mathsf J}
		\vartheta_k(I,u)
		\left|
		v(k,\Phi_{N,k}(\mu,u))
		-
		v(k,\Phi_k(\mu,u))
		\right|
		\\
		&\quad\leq
		L_v\varepsilon_N^{\Phi}
		\sum_{k\in\mathsf J}\vartheta_k(I,u)
		=
		L_v\varepsilon_N^{\Phi}.
	\end{aligned}
	\]
\end{proof}

\subsection{Approximate Bellman operator}
\label{subsec:approximate-bellman-operator}

For \(v\in B_b(\Ical)\), define
\begin{equation}
	(\mathcal B_Nv)(I)
	:=
	\inf_{u\in\Ucal}
	\left\{
	g_N(I,u)
	+
	\alpha
	\int_{\Ical}
	v(I')\,\mathcal K_N(dI'\mid I,u)
	\right\},
	\label{eq:approximate-bellman-operator}
\end{equation}
where \(g_N\) is an approximate filtered cost. In the measure-first projected model, a natural choice is
\begin{equation}
	g_N((j,\mu),u)
	:=
	g\!\left(
	j,\Pi_{N,j}\mu,u
	\right),
	\label{eq:projected-stage-cost}
\end{equation}
with the abbreviated notation
\[
g(j,\nu,u)
:=
\int_{\Xcal}c(x,u)\,\nu(dx).
\]

Define the uniform reachable cost error
\begin{equation}
	\varepsilon_N^g
	:=
	\sup_{\substack{I\in\mathcal R\\u\in\Ucal}}
	|g_N(I,u)-g(I,u)|.
	\label{eq:stage-cost-approximation-error}
\end{equation}
Since \(\mathcal K_N\) is stochastic, the same argument as in \cref{prop:bellman-contraction} shows that
\[
\mathcal B_N
\]
is an \(\alpha\)-contraction on bounded measurable functions whenever its pointwise infimum admits a measurable version. Let \(V_N^*\) denote its unique bounded fixed point:
\begin{equation*}
	V_N^*=\mathcal B_NV_N^*.
	\label{eq:approximate-value-equation}
\end{equation*}

\begin{remark}[Disconnected observable classes]
	\label{rem:disconnected-observable-classes}
	The class-dependent approximation is defined with respect to the observable class
	$O_j$ as a whole, independently of whether $O_j$ is connected. In particular,
	if $O_j$ is the union of several disjoint measurable components, the histogram
	cells $\{C_{N,j,\ell}\}_{\ell=1}^{M_{N,j}}$ are chosen so that
	\[
	O_j=\bigcup_{\ell=1}^{M_{N,j}} C_{N,j,\ell},
	\qquad
	C_{N,j,\ell}\cap C_{N,j,\ell'}=\varnothing,
	\quad \ell\neq \ell',
	\]
	up to $\lambda_j$-null sets, and no cell is allowed to cross from $O_j$ into a
	different observable class.
	
	The corresponding weights are normalized globally over the whole observable
	class,
	\[
	\sum_{\ell=1}^{M_{N,j}}\omega_\ell = 1,
	\]
	rather than separately over each connected component. Hence, different connected
	components of the same $O_j$ do not define different observation labels or
	different conditional distributions.
	
	This convention is particularly relevant for the residual observable class
	$O_{r+1}$, which, being defined as a complement of the explicitly specified
	observable classes, may naturally be disconnected. The approximation
	$\mathcal Z_{N,r+1}$ is nevertheless constructed as a single class-dependent
	family associated with the unique observation label $r+1$.
	
	Consequently,
	\[
	\Pi_{N,j}\mu
	=
	\sum_{\ell=1}^{M_{N,j}}
	\mu(C_{N,j,\ell})\,\lambda_{N,j,\ell}^{\circ}
	\]
	preserves the total mass distribution across the different components of $O_j$
	through the corresponding cell weights, without introducing any additional
	conditioning by connected component.
\end{remark}

\subsection{Error bound for the value function}
\label{subsec:value-error-bound}

We first state an abstract perturbation estimate.

\begin{theorem}[Abstract discounted value-error bound]
	\label{thm:abstract-value-error}
	Assume that \(\mathcal B\) and \(\mathcal B_N\) are \(\alpha\)-contractions on the same Banach space of bounded functions and let \(V^*\) and \(V_N^*\) be their respective fixed points. If
	\begin{equation*}
		\sup_{I\in\mathcal R}
		|(\mathcal B_NV^*)(I)-(\mathcal BV^*)(I)|
		\leq
		\varepsilon_N^{\mathcal B},
		\label{eq:bellman-perturbation-error}
	\end{equation*}
	then
	\begin{equation*}
		\sup_{I\in\mathcal R}
		|V_N^*(I)-V^*(I)|
		\leq
		\frac{\varepsilon_N^{\mathcal B}}
		{1-\alpha}.
		\label{eq:abstract-value-error-bound}
	\end{equation*}
	More specifically, if
\begin{equation}
	\begin{aligned}
		\varepsilon_N^{\mathcal K}(V^*)
		:=
		\sup_{\substack{I\in\mathcal R\\ u\in\Ucal}}
		\Bigg|
		&\int_{\mathcal I}
		V^*(I')\,
		\mathcal K_N(dI'\mid I,u)
		\\
		&-
		\int_{\mathcal I}
		V^*(I')\,
		\mathcal K(dI'\mid I,u)
		\Bigg|.
	\end{aligned}
	\label{eq:value-kernel-error}
\end{equation}
	then
	\begin{equation}
		\varepsilon_N^{\mathcal B}
		\leq
		\varepsilon_N^g
		+
		\alpha
		\varepsilon_N^{\mathcal K}(V^*),
		\label{eq:bellman-error-components}
	\end{equation}
	and therefore
	\begin{equation}
		\sup_{I\in\mathcal R}
		|V_N^*(I)-V^*(I)|
		\leq
		\frac{
			\varepsilon_N^g
			+
			\alpha\varepsilon_N^{\mathcal K}(V^*)
		}{1-\alpha}.
		\label{eq:general-value-error-bound}
	\end{equation}
\end{theorem}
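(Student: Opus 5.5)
The plan is the standard perturbation argument for fixed points of contractions, organised in two layers. The first layer proves the abstract bound from $\varepsilon_N^{\mathcal B}$. The second splits $\varepsilon_N^{\mathcal B}$ into a cost part and a kernel part.

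\emph{First layer.} Write $V_N^*-V^*=\mathcal B_NV_N^*-\mathcal BV^*$ and insert the intermediate term $\mathcal B_NV^*$:
\[
V_N^*-V^*=\bigl(\mathcal B_NV_N^*-\mathcal B_NV^*\bigr)+\bigl(\mathcal B_NV^*-\mathcal BV^*\bigr).
\]
Because $\mathcal B_N$ is an $\alpha$-contraction (the argument of \cref{prop:bellman-contraction} applies, since $\mathcal K_N$ is stochastic), the first bracket is bounded pointwise by $\alpha\|V_N^*-V^*\|$. The second bracket is bounded by $\varepsilon_N^{\mathcal B}$. Taking suprema and rearranging gives $(1-\alpha)\|V_N^*-V^*\|\le\varepsilon_N^{\mathcal B}$, which is the bound $\varepsilon_N^{\mathcal B}/(1-\alpha)$.

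A subtlety must be handled here. The contraction inequality controls $|(\mathcal B_NV_N^*)(I)-(\mathcal B_NV^*)(I)|$ at $I\in\mathcal R$ by $\alpha$ times the supremum of $|V_N^*-V^*|$ over the \emph{successor} states $(k,\Phi_{N,k}(\mu,u))$. These are projected measures and need not lie in $\mathcal R$. I would therefore run the argument with the supremum over a set $\mathcal S\supseteq\mathcal R$ that is closed under the successors of $\mathcal K_N$ and on which the perturbation estimate holds; the simplest choice is $\mathcal S=\Ical$. With such an $\mathcal S$ the rearrangement is legitimate, since all quantities are finite because $V^*,V_N^*\in B_b(\Ical)$. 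Restricting the final display to $\mathcal R\subseteq\mathcal S$ then gives the stated inequality. I expect this step to be the main obstacle. My first attempt is to interpret the hypothesis as holding on a $\mathcal K_N$-invariant set containing $\mathcal R$. If that is unavailable, the fallback is to state the bound with the supremum over all of $\Ical$.

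\emph{Second layer.} Fix $I\in\mathcal R$. Using $|\inf_ua_u-\inf_ub_u|\le\sup_u|a_u-b_u|$ as in \cref{prop:bellman-contraction}, I would bound
\[
|(\mathcal B_NV^*)(I)-(\mathcal BV^*)(I)|\le\sup_{u\in\Ucal}\Bigl\{|g_N(I,u)-g(I,u)|+\alpha\Bigl|\int V^*\,d\mathcal K_N(\cdot\mid I,u)-\int V^*\,d\mathcal K(\cdot\mid I,u)\Bigr|\Bigr\}.
\]
By \cref{eq:stage-cost-approximation-error} the first term is at most $\varepsilon_N^g$, and by \cref{eq:value-kernel-error} the second is at most $\alpha\varepsilon_N^{\mathcal K}(V^*)$. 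Taking the supremum over $I\in\mathcal R$ gives \cref{eq:bellman-error-components}, and substituting into the abstract bound gives \cref{eq:general-value-error-bound}.

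As a remark, if $V^*\in\mathrm{Lip}_b(\Ical)$, then \cref{prop:approximate-kernel-consistency} yields $\varepsilon_N^{\mathcal K}(V^*)\le L_{V^*}\varepsilon_N^{\Phi}$. \Cref{prop:projected-filter-consistency} then links the bound to the projection error, although this is not needed for the theorem itself.
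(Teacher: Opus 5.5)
Your argument follows the paper's proof in \cref{app:finite-dimensional-error-bounds}: insert $\mathcal B_NV^*$, contract, rearrange, and then split the Bellman perturbation via $|\inf_ua_u-\inf_ub_u|\le\sup_u|a_u-b_u|$ into $\varepsilon_N^g+\alpha\varepsilon_N^{\mathcal K}(V^*)$.

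The obstacle you flag is genuine, and it is a gap in the paper's proof rather than in yours. The appendix writes $\|\mathcal B_NV_N^*-\mathcal B_NV^*\|_{\infty,\mathcal R}\le\alpha\|V_N^*-V^*\|_{\infty,\mathcal R}$ without justification. That inequality needs $\mathcal R$ to contain the successors $(k,\Phi_{N,k}(\mu,u))$, but these are projected measures in $\Ical_N$ and generally not in $\mathcal R$.

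In fact, as an abstract statement, the theorem with hypotheses only on $\mathcal R$ is false. Take $\Ical=\{a,b\}$, $\mathcal R=\{a\}$, one action, $g\equiv0$ and $\mathcal K(\cdot\mid I)=\delta_I$, so that $V^*\equiv0$. Take also $g_N(a)=0$, $g_N(b)=1$ and $\mathcal K_N(\cdot\mid a)=\mathcal K_N(\cdot\mid b)=\delta_b$. Both operators are $\alpha$-contractions. On $\mathcal R$ one has $\varepsilon_N^g=0$ and $\varepsilon_N^{\mathcal K}(V^*)=|V^*(b)-V^*(a)|=0$, so the hypothesis holds with $\varepsilon_N^{\mathcal B}=0$. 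Yet $V_N^*(a)=\alpha/(1-\alpha)\neq0$. Requiring the perturbation estimate on a $\mathcal K_N$-closed set $\mathcal S\supseteq\mathcal R$ is therefore necessary, not merely cautious, and with it your first layer is correct.

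What you still need to repair is the link between your two layers. The corrected first layer consumes $\sup_{I\in\mathcal S}|(\mathcal B_NV^*)(I)-(\mathcal BV^*)(I)|$. Your second layer, however, fixes $I\in\mathcal R$ and takes suprema over $\mathcal R$ only. To reach \eqref{eq:general-value-error-bound}, you must define $\varepsilon_N^g$ and $\varepsilon_N^{\mathcal K}(V^*)$ with suprema over $\mathcal S$; the inf--sup step itself is unchanged.

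Each choice of $\mathcal S$ has a cost.
\begin{itemize}
\item $\mathcal S=\Ical$ works even for an abstract contraction. But it forces cost and kernel accuracy on all of $\Pcal_j$, which the paper explicitly wants to avoid.
\item The smallest set containing $\mathcal R$ and closed under positive-probability $\mathcal K_N$-successors is more economical. It only makes sense for the Bellman form $g_N+\alpha\int v\,d\mathcal K_N$. With it, the downstream projection estimates must also hold for exact updates $\Phi_k(\nu,u)$ issued from projected measures $\nu$, not only on $\mathcal R_k$.
\end{itemize}
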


\begin{proof}
	See \cref{app:finite-dimensional-error-bounds}.
\end{proof}

The next corollary specializes the abstract estimate to the projected filter.

\begin{corollary}[Value error for the projected filter]
	\label{cor:projected-filter-value-error}
	Assume that:
	\begin{enumerate}[label=\textnormal{(\roman*)},leftmargin=*]
		\item the filtered cost is uniformly Lipschitz in total variation on reachable class components:
		\begin{equation}
			|g((j,\mu),u)-g((j,\nu),u)|
			\leq
			L_g\,d_{\mathrm{TV}}(\mu,\nu);
			\label{eq:filtered-cost-lipschitz}
		\end{equation}
		\item the optimal value is classwise Lipschitz:
		\begin{equation*}
			|V^*(j,\mu)-V^*(j,\nu)|
			\leq
			L_V\,d_{\mathrm{TV}}(\mu,\nu);
			\label{eq:optimal-value-lipschitz}
		\end{equation*}
		\item \cref{ass:reachable-projection-consistency} holds.
	\end{enumerate}
	Then
	\begin{equation}
		\varepsilon_N^g
		\leq
		L_g\varepsilon_N^{\Pi},
		\qquad
		\varepsilon_N^{\mathcal K}(V^*)
		\leq
		L_V\varepsilon_N^{\Phi},
		\label{eq:projected-errors}
	\end{equation}
	and
	\begin{equation*}
		\sup_{I\in\mathcal R}
		|V_N^*(I)-V^*(I)|
		\leq
		\frac{
			L_g\varepsilon_N^{\Pi}
			+
			\alpha L_V\varepsilon_N^{\Phi}
		}{1-\alpha}.
		\label{eq:projected-value-error}
	\end{equation*}
	Using \cref{eq:filter-projection-error-bound},
	\begin{equation}
		\sup_{I\in\mathcal R}
		|V_N^*(I)-V^*(I)|
		\leq
		\frac{
			L_g+\alpha L_V
		}{1-\alpha}
		\varepsilon_N^{\Pi}.
		\label{eq:projection-only-value-error}
	\end{equation}
\end{corollary}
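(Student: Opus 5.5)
The plan is to prove the two component bounds in \cref{eq:projected-errors} separately and then substitute them into \cref{eq:general-value-error-bound} of \cref{thm:abstract-value-error}. The final estimate \cref{eq:projection-only-value-error} will then follow from \cref{prop:projected-filter-consistency}.

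\emph{Cost error.} Fix $I=(j,\mu)\in\mathcal R$ and $u\in\Ucal$. By \cref{eq:projected-stage-cost}, $g_N(I,u)-g(I,u)=g(j,\Pi_{N,j}\mu,u)-g(j,\mu,u)$. Both $\Pi_{N,j}\mu$ and $\mu$ lie in $\Pcal_j$, so the classwise Lipschitz hypothesis \cref{eq:filtered-cost-lipschitz} gives the bound $L_g\,d_{\mathrm{TV}}(\Pi_{N,j}\mu,\mu)$. This is at most $L_g\varepsilon_{N,j}^{\Pi}\le L_g\varepsilon_N^{\Pi}$ by \cref{eq:classwise-projection-error,eq:global-projection-error}, since $\mu\in\mathcal R_j$. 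Taking the supremum over $(I,u)$ yields $\varepsilon_N^g\le L_g\varepsilon_N^{\Pi}$. One point needs care: the Lipschitz hypothesis is stated "on reachable class components", while $\Pi_{N,j}\mu$ need not be reachable. I will therefore read (i) as holding on all of $\Pcal_j$. In the natural case this costs nothing: for bounded $c$ one has $|g(j,\mu,u)-g(j,\nu,u)|\le 2C_c\,d_{\mathrm{TV}}(\mu,\nu)$ on all of $\Pcal(\Xcal)$, so one may simply take $L_g=2C_c$.

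\emph{Kernel error.} Here I would repeat the argument of \cref{prop:approximate-kernel-consistency} with $v=V^*$. Both kernels \cref{eq:information-transition-kernel,eq:approximate-information-kernel} use the same exact branch weights $\vartheta_k(I,u)$, so the difference of the integrals is bounded by
\[
\sum_{k\in\mathsf J}\vartheta_k(I,u)\bigl|V^*(k,\Phi_{N,k}(\mu,u))-V^*(k,\Phi_k(\mu,u))\bigr| .
\]
Terms with $\vartheta_k(I,u)=0$ vanish, so the zero-mass convention \cref{eq:approximate-zero-mass-filter} plays no role. For the remaining terms, hypothesis (ii) together with \cref{eq:reachable-filter-error} bounds each difference by $L_V\varepsilon_N^{\Phi}$. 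Because $\sum_k\vartheta_k=1$, this gives $\varepsilon_N^{\mathcal K}(V^*)\le L_V\varepsilon_N^{\Phi}$. The same domain issue arises as for the cost: $\Phi_{N,k}(\mu,u)$ lies in $\mathcal Z_{N,k}\subseteq\Pcal_k$ but not necessarily in $\mathcal R_k$. So the Lipschitz property (ii) must be understood on the class component $\{k\}\times\Pcal_k$, which is exactly how $\mathrm{Lip}_b(\Ical)$ was defined.

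\emph{Assembly.} Substituting both bounds into \cref{eq:bellman-error-components,eq:general-value-error-bound} gives the first displayed value-error estimate. For the last step I would invoke \cref{prop:projected-filter-consistency}, whose proviso is that reachable exact posteriors lie in $\mathcal R_k$. This holds by the definition of $\mathcal R_k$, since positive-probability updates of reachable beliefs are again reachable. Hence $\varepsilon_N^{\Phi}\le\varepsilon_N^{\Pi}$, and \cref{eq:projection-only-value-error} follows; hypothesis (iii) then makes the bound tend to $0$.

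The main obstacle is the domain subtlety noted above: both Lipschitz hypotheses are applied at projected measures that may lie outside the reachable set. I will resolve it by interpreting (i) and (ii) classwise on all of $\Pcal_j$, and by noting that (i) is automatic with $L_g=2C_c$. A secondary issue is that \cref{thm:abstract-value-error} requires $\mathcal B_N$ to be a contraction with a measurable infimum. I will take this as given, as the paper does when it defines $V_N^*$.
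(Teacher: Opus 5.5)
Your proposal is correct and follows the paper's proof. The paper obtains $\varepsilon_N^g\le L_g\varepsilon_N^{\Pi}$ from \cref{eq:projected-stage-cost} and the Lipschitz hypothesis, and $\varepsilon_N^{\mathcal K}(V^*)\le L_V\varepsilon_N^{\Phi}$ from \cref{prop:approximate-kernel-consistency} with $v=V^*$, then applies \cref{thm:abstract-value-error} and \cref{prop:projected-filter-consistency}; your three remarks (that $\Pi_{N,j}\mu$ and $\Phi_{N,k}(\mu,u)$ need not be reachable, so (i)--(ii) should hold on all of $\Pcal_j$ with $L_g=2C_c$ always available; and that positive-probability exact updates remain in $\mathcal R_k$) are details the paper leaves implicit, though the same non-reachability of $\Phi_{N,k}(\mu,u)$ also undermines the step $\|\mathcal B_NV_N^*-\mathcal B_NV^*\|_{\infty,\mathcal R}\le\alpha\|V_N^*-V^*\|_{\infty,\mathcal R}$ in the proof of \cref{thm:abstract-value-error} (it needs $\mathcal K_N(\cdot\mid I,u)$ concentrated on $\mathcal R$), so both you and the paper use that theorem as a black box.
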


\begin{proof}
	The first estimate in \cref{eq:projected-errors} follows from \cref{eq:projected-stage-cost,eq:filtered-cost-lipschitz}. The second follows from \cref{prop:approximate-kernel-consistency} with \(v=V^*\). Apply \cref{thm:abstract-value-error} and then \cref{prop:projected-filter-consistency}.
\end{proof}

\begin{remark}[Positive-probability reachable branches]
	\label{rem:positive-branch-approximation}
	The projected-filter definition itself requires only that zero-mass branches be treated as unreachable. Quantitative Lipschitz estimates for the exact normalization map may additionally require a uniform lower bound
	\[
	p_k(\mu,u)\geq\underline p>0
	\]
	on the retained reachable branches. Such a condition is not imposed globally here; it will be verified or enforced numerically on the finite reachable sets used in the reference example.
\end{remark}

\begin{remark}[Approximation of branch probabilities]
	\label{rem:approximated-branch-probabilities}
	The present section keeps \(\vartheta_k(I,u)\) exact in order to isolate the posterior-projection error. In the numerical implementation, quadrature or sampling may also approximate these probabilities. The resulting additional kernel perturbation can be incorporated into \(\varepsilon_N^{\mathcal K}(V^*)\) in \cref{eq:value-kernel-error}; it will be discussed in the reference-model section rather than included in the core theoretical approximation.
\end{remark}

%
%

\section{Illustrative reference model}
\label{sec:reference-model}

This section presents a synthetic reference model designed to illustrate the complete construction developed in the previous sections. The example is not intended as an empirical application. Its purpose is to exhibit, in a single compact setting, saturated dynamics, atom-generating boundaries, deterministic class observations, exact recursive filtering, information-state control, and histogram approximation.

\subsection{Saturated controlled dynamics}
\label{subsec:saturated-controlled-dynamics}

Let
\[
\Xcal=[0,1],
\qquad
\Ucal=\{-1,0,1\},
\]
and define the saturation map
\begin{equation}
	\mathsf S(z)
	:=
	\min\{1,\max\{0,z\}\}.
	\label{eq:saturation-map}
\end{equation}
The controlled state process evolves according to
\begin{equation}
	X_{t+1}
	=
	\mathsf S
	\left(
	aX_t+bU_t+\eta_{t+1}+\varepsilon_{t+1}
	\right),
	\label{eq:reference-saturated-dynamics}
\end{equation}
with
\begin{equation}
	a=0.85,
	\qquad
	b=0.12.
	\label{eq:reference-dynamic-parameters}
\end{equation}
The perturbations are mutually independent, independent of \(X_0\), and distributed as
\begin{equation}
	\eta_t\sim\operatorname{Unif}[-0.04,0.04],
	\qquad
	\varepsilon_t\sim\operatorname{Unif}[-0.02,0.02].
	\label{eq:reference-noise-laws}
\end{equation}

Let
\[
W_t:=\eta_t+\varepsilon_t.
\]
Its density is the convolution of two centered uniform densities. Writing
\[
h_\eta=0.04,
\qquad
h_\varepsilon=0.02,
\]
the density \(f_W\) is trapezoidal, supported on
\[
[-h_\eta-h_\varepsilon,h_\eta+h_\varepsilon]
=
[-0.06,0.06].
\]
An explicit piecewise expression is given in
\cref{app:reference-transition-density}.

For \(x\in[0,1]\) and \(u\in\Ucal\), let
\[
m(x,u):=ax+bu.
\]
Before saturation, the next state is \(m(x,u)+W_{t+1}\). Saturation creates boundary atoms:
\begin{align}
	q_0(x,u)
	&:=
	\Pp(X_{t+1}=0\mid X_t=x,U_t=u)
	\notag\\
	&=
	F_W\!\left(-m(x,u)\right),
	\label{eq:lower-atom-probability}
	\\
	q_1(x,u)
	&:=
	\Pp(X_{t+1}=1\mid X_t=x,U_t=u)
	\notag\\
	&=
	1-F_W\!\left(1-m(x,u)\right).
	\label{eq:upper-atom-probability}
\end{align}
On the open interval \((0,1)\), the transition density is
\begin{equation*}
	q(y\mid x,u)
	=
	f_W\!\left(y-m(x,u)\right),
	\qquad 0<y<1.
	\label{eq:reference-transition-density}
\end{equation*}
Hence the controlled transition kernel is
\begin{equation}
	\begin{aligned}
		Q(dy\mid x,u)
		&=
		q_0(x,u)\delta_0(dy)
		+
		q(y\mid x,u)\one_{(0,1)}(y)\,dy
		\\
		&\quad+
		q_1(x,u)\delta_1(dy).
	\end{aligned}
	\label{eq:reference-transition-kernel}
\end{equation}

\subsection{Observable partition with atomic boundaries}
\label{subsec:reference-observable-partition}

Fix the interior threshold
\begin{equation*}
	\underline x=0.35.
	\label{eq:reference-threshold}
\end{equation*}
The observable partition is
\begin{equation*}
	\begin{aligned}
		O_1&=\{0\},\\
		O_2&=(0,\underline x],\\
		O_3&=(\underline x,1),\\
		O_4&=\{1\}.
	\end{aligned}
	\label{eq:reference-observable-classes}
\end{equation*}
The observation mechanism is deterministic conditional on the hidden state:
\begin{equation}
	Y_t=j
	\quad\Longleftrightarrow\quad
	X_t\in O_j.
	\label{eq:reference-observation-mechanism}
\end{equation}
No additional classification noise is introduced. Nevertheless, \(Y_t\) is stochastic from the controller's perspective because it depends on the random hidden state \(X_t\).

The classwise dominating measures are chosen as
\begin{equation*}
	\lambda_1=\delta_0,
	\quad
	\lambda_2=\operatorname{Leb}|_{O_2},
	\quad
	\lambda_3=\operatorname{Leb}|_{O_3},
	\quad
	\lambda_4=\delta_1.
	\label{eq:reference-dominating-measures}
\end{equation*}
Thus the two boundary classes are represented exactly as atoms, while the two interior classes are represented by densities.

\subsection{Explicit recursive filter}
\label{subsec:reference-explicit-filter}

Let
\[
I_t=(Y_t,\pi_t)
\]
be the filtered information state. Given an action \(u\in\Ucal\), define the predictive boundary masses
\begin{align}
	\beta_0(\pi_t,u)
	&:=
	\int_{[0,1]}q_0(x,u)\,\pi_t(dx),
	\label{eq:predictive-lower-mass}
	\\
	\beta_1(\pi_t,u)
	&:=
	\int_{[0,1]}q_1(x,u)\,\pi_t(dx),
	\label{eq:predictive-upper-mass}
\end{align}
and the predictive interior density
\begin{equation}
	\widehat z_{t+1}^{-}(y;u)
	:=
	\int_{[0,1]}
	q(y\mid x,u)\,\pi_t(dx),
	\qquad 0<y<1.
	\label{eq:predictive-interior-density}
\end{equation}
Therefore,
\begin{equation*}
	\begin{aligned}
		\pi_{t+1}^{-}(dy)
		&=
		\beta_0(\pi_t,u)\delta_0(dy)
		+
		\widehat z_{t+1}^{-}(y;u)\one_{(0,1)}(y)\,dy
		\\
		&\quad+
		\beta_1(\pi_t,u)\delta_1(dy).
	\end{aligned}
	\label{eq:reference-predictive-measure}
\end{equation*}

The four observation probabilities are
\begin{align}
	\vartheta_1(I_t,u)
	&=
	\beta_0(\pi_t,u),
	\label{eq:reference-observation-probability-1}
	\\
	\vartheta_2(I_t,u)
	&=
	\int_0^{\underline x}
	\widehat z_{t+1}^{-}(y;u)\,dy,
	\label{eq:reference-observation-probability-2}
	\\
	\vartheta_3(I_t,u)
	&=
	\int_{\underline x}^{1}
	\widehat z_{t+1}^{-}(y;u)\,dy,
	\label{eq:reference-observation-probability-3}
	\\
	\vartheta_4(I_t,u)
	&=
	\beta_1(\pi_t,u).
	\label{eq:reference-observation-probability-4}
\end{align}
Their sum is one.

If \(Y_{t+1}=1\), then
\begin{equation}
	\pi_{t+1}=\delta_0.
	\label{eq:reference-filter-lower-atom}
\end{equation}
If \(Y_{t+1}=4\), then
\begin{equation}
	\pi_{t+1}=\delta_1.
	\label{eq:reference-filter-upper-atom}
\end{equation}
For the lower interior class,
\begin{equation}
	\pi_{t+1}(dy)
	=
	\frac{
		\one_{(0,\underline x]}(y)
		\widehat z_{t+1}^{-}(y;u)
	}{
		\displaystyle
		\int_0^{\underline x}
		\widehat z_{t+1}^{-}(s;u)\,ds
	}
	\,dy,
	\label{eq:reference-filter-lower-interior}
\end{equation}
on the positive-mass branch \(Y_{t+1}=2\). Similarly, on \(Y_{t+1}=3\),
\begin{equation}
	\pi_{t+1}(dy)
	=
	\frac{
		\one_{(\underline x,1)}(y)
		\widehat z_{t+1}^{-}(y;u)
	}{
		\displaystyle
		\int_{\underline x}^{1}
		\widehat z_{t+1}^{-}(s;u)\,ds
	}
	\,dy.
	\label{eq:reference-filter-upper-interior}
\end{equation}
Equations
\cref{eq:reference-filter-lower-atom,eq:reference-filter-upper-atom,eq:reference-filter-lower-interior,eq:reference-filter-upper-interior}
give the exact class-dependent recursive filter.

\subsection{Information-state dynamics}
\label{subsec:reference-information-dynamics}

For \(I=(j,\mu)\), define
\begin{equation*}
	\Psi(I,u,k)
	=
	\left(k,\Phi_k(\mu,u)\right),
	\label{eq:reference-information-update}
\end{equation*}
where the four maps \(\Phi_k\) are given explicitly by
\cref{eq:reference-filter-lower-atom,eq:reference-filter-upper-atom,eq:reference-filter-lower-interior,eq:reference-filter-upper-interior}.
The information-state kernel is
\begin{equation*}
	\mathcal K(D\mid I,u)
	=
	\sum_{k=1}^{4}
	\vartheta_k(I,u)
	\one_D\!\left(
	k,\Phi_k(\mu,u)
	\right).
	\label{eq:reference-information-kernel}
\end{equation*}

The kernel is weakly continuous on reachable subsets that stay away from positive-mass branch degeneracy. Indeed, the primitive transition kernel in
\cref{eq:reference-transition-kernel} is weakly continuous in \((x,u)\), and the only class boundaries are
\[
0,\quad \underline x,\quad 1.
\]
The two exterior boundaries are represented as explicit atoms rather than as discontinuity points of an interior restriction. At the interior threshold \(\underline x\), the predictive continuous component assigns zero mass to the singleton \(\{\underline x\}\). Consequently,
\begin{equation*}
	(\mathcal L_u\mu)(\partial O_k)=0
	\label{eq:reference-null-boundary}
\end{equation*}
for the continuous class boundaries, while atomic boundary masses are handled separately by
\cref{eq:predictive-lower-mass,eq:predictive-upper-mass}.
On any retained reachable branch satisfying
\[
\vartheta_k(I,u)\geq\underline p>0,
\]
the normalized filter is weakly continuous by the argument in
\cref{app:weak-continuity-kernel}.

\subsection{Bellman equation}
\label{subsec:reference-bellman-equation}

Let
\begin{equation*}
	x_{\mathrm{tar}}=0.6,
	\qquad
	q=1,
	\qquad
	r=0.05,
	\label{eq:reference-cost-parameters}
\end{equation*}
and let the boundary penalties be
\begin{equation*}
	M_0=M_1=2.
	\label{eq:reference-boundary-penalties}
\end{equation*}
The one-stage hidden-state cost is
\begin{equation}
	\begin{aligned}
		c(x,u)
		&=
		q(x-x_{\mathrm{tar}})^2
		+
		ru^2
		\\
		&\quad+
		M_0\one_{\{0\}}(x)
		+
		M_1\one_{\{1\}}(x).
	\end{aligned}
	\label{eq:reference-stage-cost}
\end{equation}
The filtered cost is
\begin{equation*}
	\begin{aligned}
		g((j,\mu),u)
		&=
		q\int_{[0,1]}
		(x-x_{\mathrm{tar}})^2\,\mu(dx)
		+
		ru^2
		\\
		&\quad+
		M_0\mu(\{0\})
		+
		M_1\mu(\{1\}).
	\end{aligned}
	\label{eq:reference-filtered-cost}
\end{equation*}
With discount factor
\begin{equation*}
	\alpha=0.95,
	\label{eq:reference-discount-factor}
\end{equation*}
the Bellman equation becomes
\begin{equation}
	\begin{aligned}
		V^*(I)
		=
		\min_{u\in\{-1,0,1\}}
		\Bigg\{
		&g(I,u)
		\\
		&+
		\alpha
		\sum_{k=1}^{4}
		\vartheta_k(I,u)
		V^*\!\left(
		k,\Phi_k(\mu,u)
		\right)
		\Bigg\}.
	\end{aligned}
	\label{eq:reference-bellman}
\end{equation}
Because the action set is finite, the minimum is attained whenever the continuation terms are well defined.

\subsection{Histogram approximation}
\label{subsec:reference-histogram-approximation}

The atoms at \(0\) and \(1\) are preserved exactly. The open interval \((0,1)\) is divided into \(N\) uniform cells,
\begin{equation*}
	C_{N,\ell}
	=
	\left(
	\frac{\ell-1}{N},
	\frac{\ell}{N}
	\right],
	\qquad
	\ell=1,\ldots,N,
	\label{eq:reference-uniform-cells}
\end{equation*}
with width
\[
\Delta_N=\frac1N.
\]
The threshold \(\underline x=0.35\) is assigned to the lower interior class according to
\cref{eq:reference-observable-classes}. When a cell intersects the threshold, it is split at \(\underline x\) so that no histogram cell crosses two observable classes.

For an interior posterior \(\mu\), the histogram projection is
\begin{equation*}
	\Pi_N\mu
	=
	\sum_{\ell}
	\mu(C_{N,\ell})
	\frac{
		\operatorname{Leb}(\,\cdot\cap C_{N,\ell})
	}{
		\operatorname{Leb}(C_{N,\ell})
	}.
	\label{eq:reference-histogram-projection}
\end{equation*}
The approximate update is
\begin{equation*}
	\Phi_{N,k}(\mu,u)
	=
	\Pi_{N,k}
	\left(
	\Phi_k(\mu,u)
	\right),
	\label{eq:reference-approximate-filter}
\end{equation*}
while the exact branch probabilities
\[
\vartheta_k(I,u)
\]
are retained in the theoretical approximation.

The numerical illustration uses
\begin{equation*}
	N\in\{10,20,40,80\}.
	\label{eq:reference-resolutions}
\end{equation*}
The double integral with respect to the two uniform perturbations is approximated by a tensor-product rectangular rule with \(17\) nodes per perturbation. Belief branches with probability below
\begin{equation*}
	\rho=10^{-4}
	\label{eq:reference-pruning-threshold}
\end{equation*}
are omitted from the finite computational tree. The discounted recursion is evaluated over a five-step horizon. These numerical choices are implementation devices and are distinct from the exact model specified above.

\subsection{Illustrative results}
\label{subsec:reference-illustrative-results}

The initial conditional law is chosen as the uniform distribution on
\begin{equation*}
	[0.45,0.75]\subset O_3.
	\label{eq:reference-initial-belief}
\end{equation*}
Thus
\[
I_0=(3,\pi_0).
\]
All values reported below are synthetic numerical outputs generated from
\cref{eq:reference-saturated-dynamics,eq:reference-noise-laws,eq:reference-observable-classes,eq:reference-stage-cost}
with the parameters fixed in this section.

\begin{table}[htbp]
	\centering
	\scriptsize
	\caption{Synthetic finite-horizon discounted results for the histogram approximation. The horizon is \(H=5\), the pruning threshold is \(\rho=10^{-4}\), and the two noise integrals use \(17\times17\) tensor-product nodes.}
	\label{tab:reference-histogram-results}
	\begin{tabular}{rrrrr}
		\hline
		\(N\) & \(\Delta_N\) & Nodes & \(V_{N,5}(I_0)\) & \(u_0^*\)\\
		\hline
		10 & 0.1000 & 770  & 0.149403 & 0\\
		20 & 0.0500 & 1987 & 0.129659 & 0\\
		40 & 0.0250 & 2156 & 0.129102 & 0\\
		80 & 0.0125 & 2325 & 0.128928 & 0\\
		\hline
	\end{tabular}
\end{table}

The values stabilize rapidly after \(N=20\). Relative to the \(N=80\) value, the absolute differences are approximately
\begin{equation*}
	\begin{aligned}
		|V_{10,5}-V_{80,5}|&=2.0475\times10^{-2},\\
		|V_{20,5}-V_{80,5}|&=7.31\times10^{-4},\\
		|V_{40,5}-V_{80,5}|&=1.73\times10^{-4}.
	\end{aligned}
	\label{eq:reference-value-differences}
\end{equation*}
The initial optimal action is \(u_0^*=0\) at every tested resolution.

At \(N=80\), the immediate filtered costs for the three actions are approximately
\begin{equation*}
	\begin{aligned}
		g(I_0,-1)&=0.057487,\\
		g(I_0,0)&=0.007487,\\
		g(I_0,1)&=0.057487.
	\end{aligned}
	\label{eq:reference-initial-stage-costs}
\end{equation*}
The corresponding one-step observation probabilities are shown in
\cref{tab:reference-initial-observation-probabilities}.

\begin{table}[htbp]
	\centering
	\scriptsize
	\caption{Synthetic one-step class probabilities from \(I_0\) at resolution \(N=80\).}
	\label{tab:reference-initial-observation-probabilities}
	\begin{tabular}{rrrrr}
		\hline
		\(u\) &
		\(\vartheta_1\) &
		\(\vartheta_2\) &
		\(\vartheta_3\) &
		\(\vartheta_4\)\\
		\hline
		\(-1\) & 0 & 0.343128 & 0.656872 & 0\\
		\(0\)  & 0 & 0.004630 & 0.995370 & 0\\
		\(1\)  & 0 & 0        & 1.000000 & 0\\
		\hline
	\end{tabular}
\end{table}

These probabilities explain the initial decision. The neutral action keeps almost all predictive mass in the target-containing class \(O_3\) while avoiding the control penalty. The action \(u=-1\) moves substantial mass into \(O_2\), whereas \(u=1\) preserves class \(O_3\) but incurs a positive control cost.

The increase in the number of reachable nodes with \(N\) reflects the finer distinction among posterior histograms. The growth is moderated by the class observation structure and by pruning low-probability branches. Since branch probabilities were evaluated numerically in this illustration, the reported values contain both histogram-projection error and quadrature error. They should therefore be interpreted as a consistency demonstration rather than as a direct numerical verification of the theoretical bound in
\cref{eq:projection-only-value-error}.

\begin{remark}[Synthetic character of the experiment]
	\label{rem:synthetic-reference-results}
	The tables in this section are reproducible synthetic calculations for the fixed reference parameters. They do not represent observed data, calibrated physical parameters, or empirical evidence for a specific application.
\end{remark}

%
%

\section{Conclusions}
\label{sec:conclusions}

This article developed a unified measure-based framework for partially observable controlled systems whose observations are generated by a finite deterministic partition of the hidden state space. Although the class assignment is deterministic once the hidden state is known, the observation process remains stochastic from the controller's perspective because it depends on the random hidden state. This formulation preserves partial observability without introducing an additional classification-noise mechanism.

The filtering construction was formulated primarily in terms of conditional probability measures. For each observable class, the predictive measure was restricted to the observed region to obtain a class-restricted unnormalized conditional measure, which was then normalized on every reachable positive-probability branch. This measure-first approach allowed atomic, absolutely continuous, and mixed conditional laws to be handled within the same recursion. Class-dependent densities were introduced only when suitable dominating measures were available.

The resulting filter yielded an exact information-state reduction. By augmenting the current observable class with the filtered conditional measure, the original partially observable model was transformed into a completely observable controlled Markov process. The associated transition kernel was written explicitly in terms of the class probabilities and the normalized filter updates. Under the stated regularity assumptions, the discounted control problem admitted a Bellman equation, value iteration was contractive, and stationary deterministic optimal policies existed.

A finite-dimensional approximation was then constructed by using class-dependent families of probability measures. Histogram families were adopted as the principal approximation because they preserve positivity, total mass, and class support, while atomic components can be retained exactly. Consistency was formulated on reachable sets rather than on the entire space of probability measures. This restriction is both mathematically less demanding and practically verifiable through recursively generated or simulated belief trajectories. An abstract perturbation bound and a projected-filter corollary quantified the resulting approximation error in the discounted value function.

The saturated reference model illustrated the complete framework. Saturation produced natural atoms at the state-space boundaries, while the interior classes were described by densities. The synthetic calculations showed qualitative stabilization of the approximate value function and of the initial optimal action as the histogram resolution increased. These results should be interpreted only as an illustration of the theoretical construction and not as empirical validation or calibration of a physical system.

The class-restricted unnormalized recursion shares with Zakai-type filtering the linear propagation of an unnormalized object before normalization. The correspondence is nevertheless structural rather than literal. The present recursion is discrete in time, uses a finite deterministic observation partition, and does not introduce a continuous noisy observation process. It is therefore not a Zakai stochastic partial differential equation, but a discrete-time unnormalized conditional recursion adapted to partition-based observations.

Several limitations delimit the present results. The discounted control analysis assumes a bounded one-stage cost and a discount factor strictly smaller than one. Weak continuity of the information-state kernel is imposed through explicit regularity conditions rather than established for every admissible model. The approximation estimates are uniform only on reachable subsets, and some quantitative bounds require class probabilities to remain uniformly positive on the retained branches. In the core approximation theory, observation probabilities are treated as exact, while the numerical illustration also introduces quadrature and pruning errors. Finally, the reference model is synthetic and one dimensional.

Two natural extensions remain. First, unbounded costs could be treated by replacing the supremum-norm setting with weighted function spaces and appropriate drift and integrability conditions. Second, the approximation theory could be extended to account simultaneously for errors in the posterior measures and in the class-observation probabilities, including errors caused by quadrature, sampling, and branch pruning.

%
%

\appendix
\section*{Appendix}
\addcontentsline{toc}{section}{Appendix}
\section{Detailed proof of the recursive filtering theorem}
\label{app:recursive-filter-proof}

We provide the details omitted from the proof of \cref{thm:main-recursive-filter}. Fix \(t\in\N_0\), \(C\in\B(\Xcal)\), and a nonempty observable class \(O_j\).

By \cref{eq:state-dynamics},
\[
\one_C(X_{t+1})
=
\one_C\!\left(G(X_t,U_t,\eta_{t+1},\varepsilon_{t+1})\right).
\]
The control \(U_t\) is \(\Hh_t\)-measurable, and the pair \((\eta_{t+1},\varepsilon_{t+1})\) is independent of \(\sigma(\Hh_t\cup\sigma(X_t))\). Therefore, conditioning first on \(\sigma(\Hh_t\cup\sigma(X_t))\) and then on \(\Hh_t\) yields
\begin{align*}
	\pi_{t+1}^{-}(C)
	&=\E[\one_C(X_{t+1})\mid\Hh_t]\\
	&=\int_{\Xcal}\int_{\mathsf E}\int_{\mathsf V}
	\one_C\!\left(G(x,U_t,e,v)\right)\\
	& \qquad \qquad \qquad 
	\cdot \nu_\varepsilon(dv)\nu_\eta(de)\pi_t(dx)\\
	&=(\mathcal L_{U_t}\pi_t)(C).
\end{align*}
This proves the prediction identity.

Next, by the class assignment,
\[
\one_{\{Y_{t+1}=j\}}
=
\one_{O_j}(X_{t+1}).
\]
Hence,
\begin{align*}
	&\E[\one_C(X_{t+1})\one_{\{Y_{t+1}=j\}}\mid\Hh_t]\\
	&\qquad=
	\E[\one_{C\cap O_j}(X_{t+1})\mid\Hh_t]\\
	&\qquad=
	(\mathcal L_{U_t}\pi_t)(C\cap O_j)\\
	&\qquad=
	\widetilde{\mathcal L}_{U_t,j}\pi_t(C).
\end{align*}
Taking \(C=\Xcal\) gives
\[
p_j(\pi_t,U_t)
=
\Pp(Y_{t+1}=j\mid\Hh_t).
\]
On the event where this conditional probability is positive, conditional Bayes' formula gives
\begin{align*}
	\Pp(X_{t+1}\in C\mid\Hh_t,Y_{t+1}=j)
	&=\frac{
		\Pp(X_{t+1}\in C,Y_{t+1}=j\mid\Hh_t)}{
		\Pp(Y_{t+1}=j\mid\Hh_t)}\\
	&=\frac{
		\widetilde{\mathcal L}_{U_t,j}\pi_t(C)}{
		p_j(\pi_t,U_t)}.
\end{align*}
Since \(\Hh_{t+1}=\sigma(\Hh_t\cup\sigma(U_t,Y_{t+1}))\) and \(U_t\) is already \(\Hh_t\)-measurable, this conditional distribution is a version of \(\pi_{t+1}\) on \(\{Y_{t+1}=j\}\). Summing over the disjoint observation events proves
\[
\pi_{t+1}=\Phi_{Y_{t+1}}(\pi_t,U_t)
\quad\Pp\text{-a.s.}
\]

Finally, let
\[
Z_j:=\{p_j(\pi_t,U_t)=0\}.
\]
Using the tower property,
\begin{align*}
	\Pp(Y_{t+1}=j,Z_j)
	&=\E\!\left[
	\one_{Z_j}\one_{\{Y_{t+1}=j\}}
	\right]\\
	&=\E\!\left[
	\one_{Z_j}\Pp(Y_{t+1}=j\mid\Hh_t)
	\right]\\
	&=\E[\one_{Z_j}p_j(\pi_t,U_t)]
	=0.
\end{align*}
Therefore, assigning the fixed measure \(\rho_j\) on \(Z_j\) changes the update only on an unreachable branch and does not alter any reachable conditional law.


\section{Measurable history correspondence and proof details for the information-state reduction}
\label{app:information-equivalence-proof}

This appendix records the measurable-history argument used in \cref{thm:pomdp-information-equivalence}. For each $t$, let
\[
\mathsf H_t^{\mathrm o}
=\mathsf O\times(\Ucal\times\mathsf O)^t
\]
be the observable history space and let
\[
\mathsf H_t^{\mathrm I}
=\Ical\times(\Ucal\times\Ical)^t
\]
be the information-history space. Starting from the initial prior and the observed class $Y_0$, the Bayes update defines $\pi_0$. Recursively, if the information state $I_s=(Y_s,\pi_s)$ is known and action $U_s$ is selected, then
\[
\pi_{s+1}=\Phi_{Y_{s+1}}(\pi_s,U_s).
\]
Because the maps $\Phi_k$ are Borel measurable, this recursion defines a measurable mapping
\[
\Gamma_t:\mathsf H_t^{\mathrm o}\longrightarrow\mathsf H_t^{\mathrm I}.
\]
The projection that keeps the first coordinate of each information state defines a measurable map
\[
\Lambda_t:\mathsf H_t^{\mathrm I}\longrightarrow\mathsf H_t^{\mathrm o},
\]
and $\Lambda_t\circ\Gamma_t$ is the identity on every reachable observable history. Consequently, a stochastic kernel defining a policy on either history space can be pulled back or pushed forward through these maps. The induced control laws coincide on reachable histories.

For the stage cost, since $U_t$ is measurable with respect to the observable history,
\begin{align*}
	\E[c(X_t,U_t)\mid\Hh_t]
	&=\int_{\Xcal}c(x,U_t)\,\pi_t(dx)\\
	&=g(I_t,U_t).
\end{align*}
Taking expectations gives equality of each expected one-stage cost. Finite-horizon equality follows by summation. The discounted identities follow from the usual interchange theorems under the integrability assumptions stated in \cref{thm:pomdp-information-equivalence}.


\section{Weak continuity of the information-state kernel}
\label{app:weak-continuity-kernel}

We provide sufficient conditions for \cref{prop:sufficient-weak-continuity} and record the selector argument used in \cref{thm:stationary-optimal-policy}. Endow each class-supported space \(\Pcal_j\) with the weak topology and \(\Ical\) with the disjoint-union topology.

\begin{assumption}[Primitive continuity conditions]
	\label{ass:primitive-continuity}
	For every \(k\in\mathsf J\), the following conditions hold.
	\begin{enumerate}[label=\textnormal{(C\arabic*)},leftmargin=*]
		\item The mapping
		\[
		(x,u)\longmapsto Q(\,\cdot\mid x,u)
		\]
		is weakly continuous.
		\item The boundary of \(O_k\) is null under every predictive law generated from the reachable information states:
		\[
		(\mathcal L_u\mu)(\partial O_k)=0.
		\]
		\item On every reachable subset \(\mathcal R\subseteq\Ical\times\Ucal\) on which the branch \(k\) is retained, there exists \(\underline p_k(\mathcal R)>0\) such that
		\[
		p_k(\mu,u)\geq\underline p_k(\mathcal R).
		\]
	\end{enumerate}
\end{assumption}

Under (C1), if \((\mu_n,u_n)\to(\mu,u)\) weakly, then
\[
\mathcal L_{u_n}\mu_n
\Rightarrow
\mathcal L_u\mu.
\]
Condition (C2) and the portmanteau theorem imply
\[
p_k(\mu_n,u_n)
=
(\mathcal L_{u_n}\mu_n)(O_k)
\longrightarrow
(\mathcal L_u\mu)(O_k)
=
p_k(\mu,u).
\]
For every bounded continuous \(\varphi\), the same argument applied to
\(\varphi\one_{O_k}\), using the null-boundary condition, gives
\[
\int_{O_k}\varphi(x)\,
(\mathcal L_{u_n}\mu_n)(dx)
\longrightarrow
\int_{O_k}\varphi(x)\,
(\mathcal L_u\mu)(dx).
\]
If \(p_k(\mu,u)>0\), division by the convergent denominators yields
\[
\Phi_k(\mu_n,u_n)
\Rightarrow
\Phi_k(\mu,u).
\]
The positive lower bound in (C3) gives uniform control of this normalization on the retained reachable set.

Now let \(v\in C_b(\Ical)\). Using the explicit kernel formula,
\[
\int_{\Ical}v(I')\,
\mathcal K(dI'\mid I_n,u_n)
=
\sum_{k\in\mathsf J}
\vartheta_k(I_n,u_n)
v\!\left(k,\Phi_k(\mu_n,u_n)\right).
\]
For every positive-mass limiting branch, both factors converge. For a zero-mass limiting branch, continuity of \(\vartheta_k\) and boundedness of \(v\) imply that its whole contribution converges to zero, independently of the fixed extension \(\rho_k\). Since the sum is finite, \(\mathcal K\) is weakly continuous.

Finally, under compactness of \(\Ucal\), lower semicontinuity of \(g\), and weak continuity of \(\mathcal K\), the one-step objective
\[
u\longmapsto
g(I,u)+\alpha\int_{\Ical}v(I')\,
\mathcal K(dI'\mid I,u)
\]
is lower semicontinuous for every \(v\in C_b(\Ical)\). Hence its argmin is nonempty and compact. The measurable maximum theorem provides a Borel selector. Applying this construction along value iteration and passing to the contraction limit yields a stationary deterministic minimizer satisfying \cref{eq:stationary-optimal-selector}.


\section{Technical details for the finite-dimensional error bounds}
\label{app:finite-dimensional-error-bounds}

We prove \cref{thm:abstract-value-error} and record the normalization estimate used in quantitative applications.

Let \(V^*\) and \(V_N^*\) be the fixed points of \(\mathcal B\) and \(\mathcal B_N\), respectively. Then
\[
\begin{aligned}
	\|V_N^*-V^*\|_{\infty,\mathcal{R}}
	&=
	\|\mathcal B_NV_N^*-\mathcal BV^*\|_{\infty,\mathcal{R}}
	\\
	&\leq
	\|\mathcal B_NV_N^*-\mathcal B_NV^*\|_{\infty,\mathcal{R}} \\
	& \qquad +
	\|\mathcal B_NV^*-\mathcal BV^*\|_{\infty,\mathcal{R}}
	\\
	&\leq
	\alpha\|V_N^*-V^*\|_{\infty,\mathcal{R}}
	+
	\varepsilon_N^{\mathcal B}.
\end{aligned}
\]
Rearranging gives
\[
\|V_N^*-V^*\|_{\infty,\mathcal{R}}
\leq
\frac{\varepsilon_N^{\mathcal B}}{1-\alpha}.
\]
For every \(I\in \mathcal{R}\),
\[
\begin{aligned}
	&|(\mathcal B_NV^*)(I)-(\mathcal BV^*)(I)|
	\\
	&\quad\leq
	\sup_{u\in\Ucal}
	\Bigg[
	|g_N(I,u)-g(I,u)|
	\\
	&\qquad\qquad+
	\alpha
	\left|
	\int V^*\,d\mathcal K_N(\cdot\mid I,u)
	-
	\int V^*\,d\mathcal K(\cdot\mid I,u)
	\right|
	\Bigg],
\end{aligned}
\]
which proves \cref{eq:bellman-error-components} and \cref{eq:general-value-error-bound}.

For completeness, consider two finite nonnegative measures \(\xi\) and \(\zeta\) on the same measurable space, with
\[
\xi(\Xcal)\geq\underline p,
\qquad
\zeta(\Xcal)\geq\underline p>0.
\]
Their normalized versions satisfy
\[
\begin{aligned}
	d_{\mathrm{TV}}
	\left(
	\frac{\xi}{\xi(\Xcal)},
	\frac{\zeta}{\zeta(\Xcal)}
	\right)
	&\leq
	\frac{1}{\xi(\Xcal)}
	d_{\mathrm{TV}}(\xi,\zeta)
	\\
	&\quad+
	\left|
	\frac{1}{\xi(\Xcal)}
	-
	\frac{1}{\zeta(\Xcal)}
	\right|
	\zeta(\Xcal)
	\\
	&\leq
	\frac{2}{\underline p}
	d_{\mathrm{TV}}(\xi,\zeta).
\end{aligned}
\]
Hence a positive lower bound on the reachable class probabilities controls the amplification caused by Bayes normalization. This estimate will be used when primitive propagation or quadrature errors are converted into posterior-filter errors.


\section{Transition density for the saturated reference model}
\label{app:reference-transition-density}

Let
\[
\eta\sim\operatorname{Unif}[-h_\eta,h_\eta],
\qquad
\varepsilon\sim\operatorname{Unif}[-h_\varepsilon,h_\varepsilon],
\]
with \(h_\eta\geq h_\varepsilon>0\), and let \(W=\eta+\varepsilon\). Then
\begin{equation*}
	f_W(w)
	=
	\begin{cases}
		0,
		&
		|w|>h_\eta+h_\varepsilon,
		\\[1mm]
		\dfrac{h_\eta+h_\varepsilon-|w|}
		{4h_\eta h_\varepsilon},
		&
		h_\eta-h_\varepsilon<|w|
		\leq h_\eta+h_\varepsilon,
		\\[3mm]
		\dfrac{1}{2h_\eta},
		&
		|w|\leq h_\eta-h_\varepsilon.
	\end{cases}
	\label{eq:uniform-convolution-density}
\end{equation*}
For the parameters of the reference model,
\[
h_\eta=0.04,
\qquad
h_\varepsilon=0.02,
\]
so
\begin{equation*}
	f_W(w)
	=
	\begin{cases}
		0,
		&
		|w|>0.06,
		\\[1mm]
		\dfrac{0.06-|w|}{0.0032},
		&
		0.02<|w|\leq0.06,
		\\[3mm]
		12.5,
		&
		|w|\leq0.02.
	\end{cases}
	\label{eq:reference-uniform-convolution-density}
\end{equation*}

Let
\[
Z=m(x,u)+W,
\qquad
m(x,u)=0.85x+0.12u.
\]
The saturated variable is
\[
X_{t+1}=\mathsf S(Z).
\]
For \(0<y<1\),
\[
q(y\mid x,u)=f_W(y-m(x,u)).
\]
The lower and upper boundary masses are
\[
q_0(x,u)=F_W(-m(x,u)),
\]
and
\[
q_1(x,u)=1-F_W(1-m(x,u)),
\]
respectively. Therefore the law of \(X_{t+1}\) consists of an absolutely continuous component on \((0,1)\) plus the two possible atoms at \(0\) and \(1\), as stated in
\cref{eq:reference-transition-kernel}.

\balance
\bibliographystyle{plainnat}
\bibliography{references}

\end{document}